\documentclass{article}
\usepackage{amsmath,graphicx}
\usepackage{subcaption}
\usepackage{amsthm}
\usepackage{amssymb}
\usepackage[all]{xy}
\usepackage{xcolor}
\theoremstyle{plain}
\newtheorem{theorem}{Theorem}
\newtheorem{proposition}[theorem]{Proposition}
\newtheorem{corollary}[theorem]{Corollary}
\newtheorem{lemma}[theorem]{Lemma}
\theoremstyle{definition}
\newtheorem{definition}[theorem]{Definition}

\newtheorem{remark}[theorem]{Remark}
\newtheorem{example}[theorem]{Example}

\def\shf{\mathcal}
\def\col{\mathcal}

\def\rank{\textrm{rank }}
\def\image{\textrm{image }}
\def\st{\textrm{star }}
\def\cl{\textrm{cl }}

\def\fr{\textrm{fr }}
\def\lk{\textrm{lk }}
\def\interior{\textrm{int }}


\title{Local homology of abstract simplicial complexes}
\author{Michael Robinson \and Chris Capraro \and Cliff Joslyn \and Emilie Purvine \and Brenda Praggastis \and Stephen Ranshous   \and Arun Sathanur}

\begin{document}
\maketitle
\begin{abstract}
This survey describes some useful properties of the local homology of abstract simplicial complexes.  Although the existing literature on local homology is somewhat dispersed, it is largely dedicated to the study of manifolds, submanifolds, or samplings thereof.  While this is a vital perspective, the focus of this survey is squarely on the local homology of abstract simplicial complexes.  Our motivation comes from the needs of the analysis of hypergraphs and graphs.  In addition to presenting many classical facts in a unified way, this survey presents a few new results about how local homology generalizes useful tools from graph theory.  The survey ends with a statistical comparison of graph invariants with local homology.
\end{abstract}

\tableofcontents

\section{Introduction}

This survey describes some useful properties of the local homology of abstract simplicial complexes.  Although the existing literature on local homology is somewhat dispersed, it is largely dedicated to the study of manifolds, submanifolds, or samplings thereof.  While this is a vital perspective -- and is one that we do not ignore here -- our focus is squarely on the local homology of abstract simplicial complexes. Our motivation comes from the needs of the analysis of hypergraphs and graphs.  We note that although there are few purely topological invariants of graphs, namely connected components, loops, and vertex degree, the topology of abstract simplicial complexes is substantially richer.  Abstract simplicial complexes are becoming more frequently used in applications, and purely topological invariants of them are both expressive and insightful.  

Judging by the literature, most of the attention on abstract simplicial complexes falls in two areas: (1) their construction from data, and (2) their global analysis using homological tools.  Although homology itself is sensitive to outliers, persistent homology \cite{Edelsbrunner_2002} is provably and practically robust.  It is for this reason that persistent homology is ascendant among recent topological tools.  

But, persistent homology is by nature global, and sometimes this is not desirable.  There is a lesser-known variant of homology, called \emph{local homology} that is also an expressive tool for studying topological spaces.  It captures a surprising variety of useful topological properties:
\begin{enumerate}
 \item For graphs -- isomorphic to 1-dimensional simplicial complexes -- it detects \emph{graph degree} (Proposition \ref{prop:graph_degree} in Section \ref{sec:graphs}),
 \item It is bounded by \emph{local clustering coefficient} in planar graphs (Theorem \ref{thm:LH_CC_bounds} in Section \ref{sec:graph_results}),
 \item It detects the \emph{dimension} of triangulated smooth manifolds (Proposition \ref{prop:homology_manifold} in Section \ref{sec:stratification_detection}),
 \item It detects \emph{boundaries} of triangulated manifolds, if they have them (Proposition \ref{prop:manifold_boundary} in Section \ref{sec:stratification_detection}),
 \item More generally, it detects cells representing non-manifold \emph{strata} (Section \ref{sec:stratification_detection}).
\end{enumerate}
A number of researchers \cite{Brown_2017,Fasy_2016,Dey_2014,Dey_2013,Bendich_2007} have recently explored these properties for point clouds derived from embedded submanifolds, primarily motivated by the concerns of manifold learning.  However, all of the above properties are \emph{intrinsic}, and do not rely on a given embedding.  This survey aims to close this gap, by providing an intrinsic, combinatorial look at both the properties (Sections \ref{sec:theory} -- \ref{sec:general}) and the pragmatics (Sections \ref{sec:computation} and \ref{sec:statistics}) of computing local homology of abstract simplicial complexes.  Our aim is twofold: first to showcase these intrinsic properties in their ``natural habitat'' and second to advocate for their use in applications.  In service to the latter, in Section \ref{sec:computation} we discuss computational aspects of one local homology library {\tt pysheaf} \cite{pysheaf} that we are actively developing, and demonstrate results on well-studied benchmark graph datasets in Section \ref{sec:statistics}.

Since local homology has been studied for over eighty years \cite{Alexandrov_1935}, many things about it are known, but the literature is disappointingly diffuse.  This article draws the related threads of knowledge together under the banner of abstract simplicial complexes, as opposed to general topological spaces or (stratified) manifolds.  In the context of computation and applications, there is a strong connection to sheaf theory.  Local homology is derived from the global sections of the \emph{homology sheaf}, which can be constructed rather concretely on abstract simplicial complexes (Proposition \ref{prop:homology_sheaf} in Section \ref{sec:local_homology_def}).  Regrettably, this sheaf-theoretic viewpoint is not as powerful as one might hope, since simplicial maps do not induce\footnote{Local \emph{cohomology} appears to have the desired functoriality, and can be used to generalize the concept of \emph{degree} of a continuous map \cite{olum1953mappings}.  However, local cohomology is not a sheaf -- it is something of a ``partial'' precosheaf according to Proposition \ref{prop:functorish} in Section \ref{sec:neighborhood}.} sheaf morphisms between homology sheaves (Example \ref{eg:functorfail} in Section \ref{sec:neighborhood}).  This perhaps explains why homology sheaves are not as prevalent in applications as one might suspect.

In addition to presenting many classical facts about local homology in Section \ref{sec:theory}, we also present a few new ideas. 
\begin{enumerate}
\item In Section \ref{sec:graphs}, we present some new results on how the clustering coefficient of a planar graph is related to its local homology,
\item In Section \ref{sec:general}, we show that the first local Betti number generalizes the degree of a vertex in a graph, and use this to interpret the other local Betti numbers as generalized degrees of other simplices in an abstract simplicial complex,
\item In Section \ref{sec:computation}, we discuss an efficient computational algorithm for local homology, tailored specifically to abstract simplicial complexes, and
\item In Section \ref{sec:statistics}, we discuss certain correlations between local Betti numbers on graphs and several graph invariants used in network science.
\end{enumerate}

\section{Historical discussion}
\label{sec:history}

The concept of local homology springs from the work of \v{C}ech \cite{Cech_1934} and Alexandov \cite{Alexandrov_1935} on Betti numbers localized to a point in the early 1930s.  Local homology must have been on the minds of both for some time, since \v{C}ech credits Alexandrov in his introduction, and Alexandrov had published some of the ideas earlier \cite{Alexandrov_1929,Alexandrov_1932,Alexandrov_1933}.   Alexandrov's restatement of the definition of local Betti numbers at a point using the then-new idea of relative homology provided the right way to greater generality.  

Based on Alexandrov's constructions, Steenrod \cite{steenrod1943homology} wrote a survey of local methods in topology a few years later, which includes the combinatorial construction that we use in Section \ref{sec:theory}.  Since his focus was squarely on topological spaces generally and manifolds in particular, Steenrod does not spend much time on his combinatorial definition, and includes no discussion of its implications.  He does recognize that local homology forms a sheaf, a fact he had proved the year before \cite{Steenrod_1942}.   This was one of the earliest concrete constructions of a sheaf; one imagines that Steenrod's and Leray's work on sheaves were happening in parallel, during Leray's captivity \cite{miller2000leray}.  Steenrod called his sheaf a \emph{system of local coefficients}, but following Borel \cite{Borel_1957}, the sheaf of local homology is now usually called the \emph{homology sheaf}.  

Borel (later working with Moore \cite{Borel_1960}) used the homology sheaf to prove Poincar\'{e} duality theorems for a number of classes of generalized manifolds.  This is apparently not a historical accident, as the study of local homology was intimately knit into the discovery of the correct way to generalize manifolds.  It was known quite early \cite{Alexandrov_1932} that local homology can be used to compute the dimension of a space, and that this definition agreed with the definition of a manifold.  In his book, Wilder \cite{Wilder_1949} used Alexandov's definition of local Betti numbers at a point (a concept subsequently generalized by White \cite{White_1950,White_1952} to closed sets, essentially mirroring Steenrod's construction using a direct limit in the homology sheaf) to constrain the neighborhoods of points.  Although there is considerable subtlety in Wilder's generalized manifold definition, Bredon showed that Wilder's generalized manifolds are locally orientable \cite{Bredon_1969} using the homology sheaf.

Milnor and Stasheff used local homology to examine the orientation of vector bundles in their classic book \cite[Appendix A]{MilnorStasheff_1974}.  They also have a result relating local homology to the induced orientation of boundaries, which is a reflection of its power in \emph{non-manifold} spaces.  The relationship between the orientation of a space and its boundary has continued to require the study of local homology for more general spaces.  For instance, Mitchell \cite{Mitchell_1990} used local homology to characterize the boundaries of homology manifolds.

Local homology is also discussed at various points in Munkres' classic textbook on algebraic topology \cite{Munkres_1984}.  Although Munkres uses abstract simplicial complexes in his book, his focus is mostly on using them as a convenient representation for working with topological spaces.  Therefore most of his statements are in terms of geometric realizations of abstract simplicial complexes.  However, he provides concise proofs of a number of facts that will be useful to the discussion in this survey, including that local homology is locally constant within the interior of a simplex \cite[Lem 35.2]{Munkres_1984} and that it provides a way to identify stratifications (Proposition \ref{prop:homology_manifold} in Section \ref{sec:stratification_detection}).  

That local homology has something to do with stratifications in simplicial complexes suggested that it has deeper theoretical analogues.  Goresky and MacPherson \cite{Goresky_1988} showed that stratified spaces can be effectively studied using intersection homology.  It is straightforward to show that local homology is a special case of intersection homology, and that is especially clear for simplicial spaces \cite{Bendich_2007}.  More generally, Rourke and Sanderson used local homology to examine stratified spaces in detail from a theoretical level \cite{rourke1999homology}.  Intersection homology even has a robust, persistent version for abstract simplicial complexes as shown by Bendich and Harer \cite{bendich2011persistent}.

Growing primarily out of the initial work in Bendich's thesis \cite{Bendich_2008}, the modern computational study of local homology has focused on the local persistent homology of point clouds.  There have been a number of fruitful directions, namely
\begin{enumerate}
\item Those following the fundamental results proven by Bendich and his collaborators \cite{bendich2012local,Bendich_2008,Bendich_2007}
\item Witnessed filtrations of Vietoris-Rips complexes to aid in more efficient computation \cite{skraba2014approximating}
\item Studying filtrations of general covers \cite{Fraser_2012}
\item Connecting local homology to dimension reduction and traditional manifold learning approaches \cite{Brown_2017,Dey_2014,Dey_2013}, especially because local homology is not the only way to learn stratified manifolds (see for instance \cite{haro2006stratification}, which uses expectation maximization),
\item Connecting local homology to exploratory data analysis \cite{Fasy_2016, ahmed2014local}.
\end{enumerate}
As noted in the introduction, this survey focuses on intrinsic local homology rather than embedded point clouds, if for no other reason that this seems to have unexplored merit in exploratory data analysis. 
Abstract simplicial complexes provide minimal topological environments on which to construct local homologies. Moreover, as demonstrated in \cite{clader2009inverse,mccord1966} and discussed in \cite{barmak2011algebraic,May_2003}, finite abstract simplicial complexes are weakly homotopic to their geometric realizations, indicating a study of the former will reveal information about the homotopy invariants of the latter \cite{Stong_1966}.

We end this brief historical discussion by noting that there is a concept dual to local homology -- that of \emph{local cohomology}.  Since the 1950s, local cohomology of topological spaces \cite{Raymond_1961} has been known to generalize the notion of a \emph{degree} of a smooth map \cite{olum1953mappings}, and that fixed points of a smooth mapping are classified induced maps on local cohomology.  However, the local cohomology of \emph{spaces} appears to have a much smaller following than the local cohomology of \emph{algebraic objects}, due to Grothendieck's vast generalization \cite{Grothendieck_1967}.  Because these two concepts of local cohomology are manifestly similar, it may be argued that local cohomology is more natural than local homology (we recommend the survey \cite{Brodman_1998} on local cohomology in algebraic geometry).  We note that computational aspects of both local homology and cohomology are presently fairly immature, but most applications are currently easier to interpret in the context of local homology.  

\section{Theoretical groundwork}
\label{sec:theory}
This article studies the local homology of abstract simplicial complexes.  Some computational efficiency can be gained by using other kinds of cell complexes, though their use complicates the exposition.

\noindent
\begin{definition} Let $V$ be a countable set. An \emph{abstract simplicial complex} $X$ with \emph{vertices} in $V$ is a collection of finite subsets of $V$ such that if $\sigma \in X$ and $\tau \subseteq \sigma$ then $\tau \in X$.  An element $\sigma$ of $X$ is called a \emph{simplex} or \emph{face of $X$}. A simplex $\sigma$ has \emph{dimension} equal to $|\sigma| - 1$. The \emph{dimension of $X$} is the maximal dimension of its simplices. We will represent each $\sigma \in X$  as a bracketed list\footnote{The order of the list is somewhat arbitrary but provides a helpful notation and is needed when computing homology.} of vertices: $\sigma = [v_0,\dotsc,v_k]$. A subset $Y \subseteq X$ is called a \emph{subcomplex} if it is an abstract simplicial complex in its own right.
\end{definition}

\begin{definition}
An abstract simplicial complex $X$ comes equipped with a natural topology, called the \emph{Alexandrov topology} \cite{Alexandrov_1937}, whose open sets are composed of arbitrary unions of sets of the form
\begin{equation*}
\st \sigma = \{\tau\in X : \sigma \subseteq \tau\}
\end{equation*}
where $\sigma$ is a face of $X$.  We shall assume that all abstract simplicial complexes are \emph{locally finite}, which means that all stars over simplices are finite sets.  
The Alexandrov topology induces a partial ordering on $X$ given by $\sigma \leq \tau$ if and only if $\st \sigma \subseteq \st \tau$.  It follows that $\sigma \leq \tau$ if and only if $\tau \subset \sigma$.\footnote{The Alexandrov topology actually induces two possible partial orders on the simplices of $X$ depending on the direction of inclusion in the definition of the star. Additionally, the Alexandrov topology can be built from a pre-order (not necessarily a partial order).  Within the context of abstract simplicial complexes, partial orders suffice.}
\end{definition}

\begin{lemma}
\label{lem:aspace}
The Alexandrov topology $\col{T}$ for an abstract simplicial complex $X$ makes $(X,\col{T})$ into an \emph{Alexandrov space}, namely one which is closed under arbitrary intersections.
\end{lemma}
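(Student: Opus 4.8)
The plan is to show that every simplex of $X$ possesses a \emph{smallest} open neighbourhood, and then observe that an arbitrary intersection of open sets is automatically the union of these minimal neighbourhoods over the points it contains, hence open.

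First I would record the basis structure supplied by the definition: every member of $\col{T}$ is a union of sets of the form $\st\sigma$. Since $\sigma\subseteq\sigma$, we have $\tau\in\st\tau$ for every simplex $\tau$, so $\st\tau$ is an open set containing $\tau$. I claim it is the minimal one. Indeed, if $U\in\col{T}$ with $\tau\in U$, write $U=\bigcup_{j}\st{\sigma_j}$; then $\tau\in\st{\sigma_j}$ for some $j$, which is to say $\sigma_j\subseteq\tau$, and hence (using the recorded fact that $\st a\subseteq\st b$ exactly when $b\subseteq a$, equivalently that $\st$ is monotone for the induced partial order) $\st\tau\subseteq\st{\sigma_j}\subseteq U$. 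Thus $\st\tau$ is contained in every open neighbourhood of $\tau$.

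Second, given an arbitrary family $\{U_i\}_{i\in I}\subseteq\col{T}$, put $W=\bigcap_{i\in I}U_i$. For each $\tau\in W$ we have $\tau\in U_i$ for all $i$, so by minimality $\st\tau\subseteq U_i$ for all $i$, whence $\st\tau\subseteq W$. Consequently $W=\bigcup_{\tau\in W}\st\tau$: the inclusion $\supseteq$ is exactly what was just shown, and $\subseteq$ holds because each $\tau\in\st\tau$. This displays $W$ as a union of basic open sets, so $W\in\col{T}$. (Together with $X=\bigcup_{\sigma\in X}\st\sigma$ and $\emptyset$ being the empty union, this also confirms that $\col{T}$ is genuinely a topology.) Closure under arbitrary intersections is precisely the defining property of an Alexandrov space, completing the proof.

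The argument is entirely formal and I do not expect a genuine obstacle; the only step needing the slightest care is the identification of $\st\tau$ as the minimal open neighbourhood of $\tau$, which rests only on the inclusion-reversing correspondence between stars and faces already noted in the definition. In particular, local finiteness of $X$ plays no role in this lemma.
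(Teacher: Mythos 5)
Your proof is correct and follows essentially the same route as the paper's: both arguments amount to observing that $\st\tau$ is the minimal open neighbourhood of $\tau$ (because $\tau\in\st\sigma$ forces $\sigma\subseteq\tau$ and hence $\st\tau\subseteq\st\sigma$), and then covering the intersection by these minimal neighbourhoods. You simply make the "minimal neighbourhood" step explicit where the paper folds it inline, but the substance is identical.
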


\begin{proof}
Let $\{U_\alpha \}$ be a collection of open sets in $\col{T}$. Suppose $\tau \in \bigcap U_\alpha$. Then for each $U_\alpha$ there exists a $\tau_\alpha$ such that $\tau \in \st \tau_\alpha \subseteq U_\alpha$. Hence $\tau_\alpha \subseteq \tau$ and $\st \tau \subseteq \st \tau_\alpha \subseteq U_\alpha$. 
\end{proof}

\begin{proposition}
A subset of an abstract simplicial complex is closed if and only if it is a subcomplex.
\end{proposition}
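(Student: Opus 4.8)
The plan is to unwind the definition of a closed set in the Alexandrov topology and match it against the downward-closure condition that defines a subcomplex. Given $Y \subseteq X$, by definition $Y$ is closed precisely when $X \setminus Y$ is open, i.e.\ a union of stars $\st \sigma$. So I would prove both implications by relating membership of a face together with its subfaces to membership in such stars, using the basic fact that $\sigma \subseteq \rho$ if and only if $\rho \in \st \sigma$.

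For the forward direction, suppose $Y$ is closed, so $X \setminus Y = \bigcup_\alpha \st \sigma_\alpha$ for some faces $\sigma_\alpha$. Take $\sigma \in Y$ and $\tau \subseteq \sigma$; I claim $\tau \in Y$. If not, then $\tau \in \st \sigma_\alpha \subseteq X \setminus Y$ for some $\alpha$, so $\sigma_\alpha \subseteq \tau \subseteq \sigma$, whence $\sigma \in \st \sigma_\alpha \subseteq X \setminus Y$, contradicting $\sigma \in Y$. Hence $Y$ is downward closed, and since it is a collection of finite subsets of $V$ it is an abstract simplicial complex, i.e.\ a subcomplex.

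For the converse, suppose $Y$ is a subcomplex. I would show $X \setminus Y$ is open by exhibiting, for each $\tau \in X \setminus Y$, the star $\st \tau$ as an open neighborhood of $\tau$ contained in $X \setminus Y$: if $\rho \in \st \tau$ then $\tau \subseteq \rho$, and if moreover $\rho \in Y$ then downward-closure of $Y$ would force $\tau \in Y$, a contradiction; so $\st \tau \subseteq X \setminus Y$. Therefore $X \setminus Y = \bigcup_{\tau \in X \setminus Y} \st \tau$ is a union of stars, hence open, so $Y$ is closed.

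There is no real obstacle here beyond bookkeeping; the one point to keep straight is the direction of the inclusion/partial order, namely that stars are indexed by \emph{sub}faces, so that $\sigma \subseteq \rho \iff \rho \in \st \sigma$. This single observation is what makes both directions go through cleanly.
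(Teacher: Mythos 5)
Your proof is correct and follows essentially the same argument as the paper: both directions reduce to the equivalence $\sigma \subseteq \rho \iff \rho \in \st\sigma$, applied to show that the complement of a subcomplex is a union of stars. The only cosmetic difference is in the forward direction, where you write $X \setminus Y$ as a union $\bigcup_\alpha \st\sigma_\alpha$ and pass through an intermediate generator $\sigma_\alpha$, whereas the paper argues more directly from the fact that $\st\sigma$ is the smallest open set containing $\sigma \in X\setminus Y$, so $\st\sigma \subseteq X\setminus Y$ immediately.
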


\begin{proof}
Let $X$ be an abstract simplicial complex and $A \subseteq X$. Suppose $A$ is closed and $\tau \in A$. If $\sigma \subseteq \tau$ and $\sigma \in X \setminus A$ then $\tau \in \st \sigma \subseteq X \setminus A$. Hence $\sigma \in A$ and $A$ is a subcomplex of $X$. Conversely, suppose $A$ is a subcomplex of $X$ and $\sigma \in X \setminus A$. If $\sigma \subseteq \tau$ and $\tau \in A$ then $\sigma \in A$, hence $\tau \in X \setminus A$ and $\st \sigma \subseteq X \setminus A$ so $X \setminus A$ is open.
\end{proof}

\begin{definition}
  Starting with a subset $A \subseteq X$ of an abstract simplicial complex, the following are useful related subsets:
  \begin{enumerate}
  \item The \emph{closure} $\cl A$ is the smallest closed set containing $A$.
  \item The \emph{star} $\st A$ is the smallest open set containing $A$.  It is also (see \cite[p. 371]{Munkres_1984}) given by the set of all simplices that contain a simplex in $A$.
  \item The \emph{interior} $\interior A$ is the largest open set contained in $A$.
  \item The \emph{link} $\lk A$ is the set of all simplices in $\cl \st A$ whose vertex sets are disjoint from $A$ \cite[p. 371]{Munkres_1984}, or $\lk A = (\cl \st A) \backslash (\st A \cup \cl A)$. 
  \item The \emph{frontier}\footnote{The frontier is often called the \emph{boundary}, but we find that this is often confused with other senses of the word ``boundary''.} is $\fr A = \cl A \cap \cl (X \backslash A)$.
  \end{enumerate}
\end{definition}

\begin{definition}
  If $X$ and $Y$ are simplicial complexes, a function $f$ that takes vertices of $X$ to vertices of $Y$ is called an \emph{(order preserving) simplicial map} $f:X \to Y$ whenever every simplex $[v_0, \dotsc, v_n]$ of $X$ is taken to a simplex\footnote{Removing duplicate vertices as appropriate} $[f(v_0), \dotsc, f(v_n)]$. 
\end{definition}

\begin{proposition} A map $f: X \rightarrow Y$ between Alexandrov spaces is continuous if and only if it preserves the pre-orders induced by their topologies. 
\end{proposition}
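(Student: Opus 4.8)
The plan is to isolate the one structural fact needed from Lemma~\ref{lem:aspace}: since arbitrary intersections of open sets are again open, every point $p$ of an Alexandrov space has a \emph{minimal open neighborhood} $U_p = \bigcap \{\, U : U \text{ open},\ p \in U \,\}$, which plays the role that $\st \sigma$ plays for a simplex. First I would record two routine consequences. (a) A set $W$ is open if and only if $U_p \subseteq W$ for every $p \in W$: if $W$ is open this is immediate from minimality of each $U_p$, and conversely $W = \bigcup_{p \in W} U_p$ is then a union of opens. (b) The pre-order induced by the topology, which in this generality reads $p \preceq q \iff U_p \subseteq U_q$, is equivalently described by $p \preceq q \iff p \in U_q$: if $p \in U_q$ then $U_q$ is an open neighborhood of $p$, so minimality gives $U_p \subseteq U_q$, while $U_p \subseteq U_q$ trivially gives $p \in U_p \subseteq U_q$. (If the paper's orientation of $\preceq$ is the reverse of this, the roles of $p$ and $q$ below simply swap.)

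Granting (a) and (b), the equivalence is a short chase in each direction. For continuity $\Rightarrow$ order preservation: suppose $f$ is continuous and $p \preceq q$, so $p \in U_q$ by (b). Then $U_{f(q)}$ is open in $Y$, hence $f^{-1}(U_{f(q)})$ is open in $X$ and contains $q$; by minimality $U_q \subseteq f^{-1}(U_{f(q)})$, so $p \in U_q$ forces $f(p) \in U_{f(q)}$, i.e.\ $f(p) \preceq f(q)$ by (b). For order preservation $\Rightarrow$ continuity: let $V \subseteq Y$ be open and $p \in f^{-1}(V)$, so $f(p) \in V$ and thus $U_{f(p)} \subseteq V$. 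For any $q \in U_p$ we have $q \preceq p$ by (b), hence $f(q) \preceq f(p)$, hence $f(q) \in U_{f(p)} \subseteq V$; thus $U_p \subseteq f^{-1}(V)$, and by (a) the set $f^{-1}(V)$ is open. Since $V$ was arbitrary, $f$ is continuous.

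I expect no genuinely hard step here; the ``main obstacle'' in spirit is bookkeeping — keeping the direction of the specialization pre-order consistent with the convention fixed earlier in the paper (the footnote warning that the Alexandrov topology admits two natural orders is precisely the pitfall), and making sure the minimal-neighborhood construction is legitimately invoked via Lemma~\ref{lem:aspace} rather than silently assumed. Everything else is definition-unwinding.
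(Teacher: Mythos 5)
Your proof is correct and takes essentially the same route as the paper: both arguments reduce continuity to a definition-chase with the minimal open neighborhoods $U_p$ (which are exactly the stars $\st p$ in a simplicial complex), using the characterization $p \preceq q \iff p \in U_q$. Your direction convention also agrees with the paper's, since $p \in \st q$ iff $q \subseteq p$ iff $p \leq q$ there, so no swap is needed.

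One thing worth noting: your converse direction is actually cleaner than the paper's as printed. You show $f^{-1}(V)$ is open for an \emph{arbitrary} open $V$ by verifying $U_p \subseteq f^{-1}(V)$ for each $p \in f^{-1}(V)$, using only order-\emph{preservation}: $q \in U_p \Rightarrow q \preceq p \Rightarrow f(q) \preceq f(p) \Rightarrow f(q) \in U_{f(p)} \subseteq V$. The paper's converse instead takes $\tau \in f^{-1}(\st f(\sigma))$, notes $f(\tau) \leq f(\sigma)$, and then asserts ``hence $\tau \leq \sigma$'' --- but that inference requires order-\emph{reflection}, which is not hypothesized and is false for non-injective simplicial maps. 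The correct step (which your proof makes) is to go the other way: take $\rho \in \st \tau$, so $\rho \leq \tau$, apply order-preservation to get $f(\rho) \leq f(\tau) \leq f(\sigma)$, and conclude $\rho \in f^{-1}(\st f(\sigma))$. So your write-up not only matches the intended argument but repairs a misstep in the paper's.

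Two minor bookkeeping points you should tie down if you expand this: (a) the reflexivity/strictness of the paper's $\leq$ (it writes $\tau \subset \sigma$ where $\subseteq$ is intended, so that $\leq$ is genuinely a partial order), and (b) that your item (a), ``$W$ open iff $U_p \subseteq W$ for all $p \in W$,'' does rely on Lemma~\ref{lem:aspace} to guarantee $U_p$ is itself open; for a general topological space $U_p$ need not be open and the equivalence fails. You flagged both of these correctly as the places where care is needed.
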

\begin{proof}\cite{May_2003, barmak2011algebraic}
Suppose $f:X \rightarrow Y$ is continuous and $\sigma \leq \tau$ in $X$. Then $\st \sigma \subseteq \st \tau \subseteq f^{-1}(\st f(\tau))$ so $f(\sigma) \in \st f(\tau)$ and $f(\sigma) \leq f(\tau)$. Conversely suppose $\sigma \in X$ and $\tau \in f^{-1}(\st f(\sigma) )$. Then $f(\tau) \in \st f(\sigma)) $ and $f(\tau) \leq f(\sigma)$. Hence $\tau \leq \sigma$ so $\st \tau \subseteq \st \sigma \subseteq f^{-1}(\st \sigma)$. Since this is true for every such $\tau$, $f^{-1}(\st \sigma)$ is open.
\end{proof}
Using Lemma \ref{lem:aspace} and the fact that simplicial maps preserve subset inclusion it immediately follows from the proposition that:
\begin{corollary} Simplicial maps between abstract simplicial complexes are continuous.
\end{corollary}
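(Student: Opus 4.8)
The plan is to deduce the corollary directly from the preceding Proposition, which characterizes continuity of a map between Alexandrov spaces as preservation of the induced pre-orders. First I would invoke Lemma \ref{lem:aspace} to note that the Alexandrov topology on any abstract simplicial complex makes it an Alexandrov space, so the Proposition applies to an arbitrary map $f : X \to Y$ between abstract simplicial complexes; moreover, as recorded just after the definition of the Alexandrov topology, the induced pre-order is here the partial order given by $\sigma \leq \tau$ if and only if $\tau \subseteq \sigma$.

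Next, given a simplicial map $f : X \to Y$, I would verify that $f$ is order preserving, i.e.\ that $\sigma \leq \tau$ in $X$ implies $f(\sigma) \leq f(\tau)$ in $Y$. Unwinding the order, $\sigma \leq \tau$ means $\tau \subseteq \sigma$ as sets of vertices. Since by definition $f$ sends a simplex $[v_0,\dotsc,v_n]$ to $[f(v_0),\dotsc,f(v_n)]$ (deleting repeated vertices), we have $f(\tau) = \{\, f(v) : v \in \tau \,\} \subseteq \{\, f(v) : v \in \sigma \,\} = f(\sigma)$, and both are genuine simplices of $Y$ because $f$ is simplicial. Reading the inclusion $f(\tau) \subseteq f(\sigma)$ back through the order, this says exactly $f(\sigma) \leq f(\tau)$, as desired.

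Finally, applying the Proposition to this order-preserving $f$ yields continuity, which is the claim. I do not expect any real obstacle here; the only point requiring care is the bookkeeping of the contravariance between the partial order and vertex-set inclusion — the order $\leq$ reverses inclusion, and since the image map is covariant on inclusions the implication $\tau \subseteq \sigma \Rightarrow f(\tau) \subseteq f(\sigma)$ translates without a mismatch into $f(\sigma) \leq f(\tau)$ — together with the harmless normalization of deleting duplicate vertices from $f(\sigma)$. One could alternatively argue directly that $f^{-1}(\st \sigma)$ is open for every simplex $\sigma$ of $Y$ by exhibiting it as a union of stars, but routing through the Proposition keeps the argument short.
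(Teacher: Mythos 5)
Your proposal is correct and matches the paper's argument: the paper dispatches the corollary with the one-line remark that, by Lemma \ref{lem:aspace} and the fact that simplicial maps preserve subset inclusion, the result follows from the preceding Proposition. You simply unpack that remark, carefully tracking the contravariance between $\leq$ and vertex-set inclusion, which is exactly the right bookkeeping.
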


\subsection{Representing data with simplicial complexes}

There are several common ways to obtain abstract simplicial complexes from data, for instance:
\begin{enumerate}
\item By triangulating a manifold or some other volume, in which case the volume is homeomorphic to the \emph{geometric realization} of an abstract simplicial complex, 
\item Constructing the Dowker complex \cite{Dowker_1952} of a relation,
\item Computing the \v{C}ech complex of a cover, or
\item Computing the Vietoris-Rips complex of a set of points in a pseudometric space.
\end{enumerate}
The Vietoris-Rips complex is based on the construction of the \emph{flag complex}, which is useful in its own right.  Datasets are often provided in the form of undirected graphs $G=(V,E)$, which correspond to $1$-dimensional abstract simplicial complexes.  The study of an undirected graph can be enhanced by enriching it into a \emph{flag complex}.

\begin{definition}
  \label{def:flag_complex}
The \emph{flag complex $F(G)$ is the abstract simplicial complex based on a graph $G$} consisting of the set of all simplices $[v_0,\dotsc,v_k]$ such that every pair of vertices giving a $1$-simplex $[v_i,v_j]$ in $F(G)$ corresponds to an edge in $G$.
\end{definition}

\begin{proposition}
A subset of vertices in a graph $G=(V,E)$ corresponds to a simplex in the flag complex based on $G$ if and only if it is a clique in $G$. 
\end{proposition}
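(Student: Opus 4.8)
The plan is to unwind the two definitions in play and observe that they coincide verbatim, so that both implications become essentially tautological. First I would recall from Definition~\ref{def:flag_complex} that a finite subset $\sigma = \{v_0,\dotsc,v_k\}\subseteq V$ is a simplex of $F(G)$ precisely when, for every pair of distinct indices $i,j$, the $1$-simplex $[v_i,v_j]$ lies in $F(G)$ --- equivalently, when every such pair $\{v_i,v_j\}$ is an edge of $G$. On the other hand, a clique of $G$ is by definition a set of vertices that is pairwise adjacent. These are the same condition on $\sigma$, so the biconditional will follow once this identification is spelled out.

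For the forward direction I would take $\sigma\in F(G)$ and simply invoke the flag condition: it forces every pair of vertices of $\sigma$ to span an edge of $G$, hence $\sigma$ is a clique. For the reverse direction I would take a clique $\sigma$, note that every pair of distinct vertices of $\sigma$ is by definition an edge, so every $1$-dimensional face $[v_i,v_j]$ of $\sigma$ belongs to $F(G)$, which is exactly the hypothesis under which $\sigma$ is listed among the simplices of $F(G)$.

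The only point that deserves a sentence of care --- and it is the closest thing to an obstacle here --- is that Definition~\ref{def:flag_complex} tacitly asserts that $F(G)$ is a genuine abstract simplicial complex, so I would verify downward closure: if $\tau\subseteq\sigma$ and $\sigma$ is a clique, then every pair of vertices of $\tau$ is also a pair of vertices of $\sigma$, hence an edge, so $\tau$ is a clique and $\tau\in F(G)$. I would also dispatch the degenerate cases: a singleton $[v_0]$ (and the empty face) has no pair of distinct vertices, so the flag/clique condition holds vacuously, matching the convention that singletons are cliques and that $F(G)$ contains every vertex of $G$; and because $G$ is a simple undirected graph there are no loops or multi-edges to complicate the pairwise-adjacency condition. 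With these remarks the equivalence is immediate.
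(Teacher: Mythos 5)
Your argument is correct, and the paper in fact offers no proof at all for this proposition --- it is stated immediately after Definition~\ref{def:flag_complex} and treated as self-evident, since the flag condition (``every pair of vertices spans an edge'') and the clique condition (``pairwise adjacent'') are the same predicate by inspection. Your careful unwinding, including the check of downward closure and the vacuous singleton/empty-face cases, is a sound and slightly more explicit version of exactly what the paper leaves implicit.
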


Although it follows from the Proposition that the flag complex contains no additional information beyond what is contained in the graph, the information is sometimes better organized.  Particularly when some graph neighborhoods are denser than others, this is reflected in the Alexandrov topology of its flag complex.  Therefore, graph-theoretic properties are encapsulated as topological properties.

\subsection{Relative simplicial homology}
\label{sec:relative_homology}
Suppose that $Y \subseteq X$ is a subcomplex of an abstract simplicial complex.

\begin{definition}
  \label{def:chain_space}
The \emph{relative $k$-chain space} $C_k(X,Y)$ is the abstract vector space\footnote{Since the software presented in Section \ref{sec:computation} works over $\mathbb{R}$ vector spaces, we avoid the obvious generalization to modules over some ring.} whose basis consists of the $k$-dimensional faces of $X$ that are not in $Y$.  We also write $C_k(X)$ in place of $C_k(X,\emptyset)$.  Given these spaces, we can define the \emph{relative boundary map} $\partial_k : C_k(X,Y) \to C_{k-1}(X,Y)$ given by
\begin{equation}
  \label{eq:boundary}
  \partial_k ([v_0,\dotsc,v_k]) = \sum_{i=0}^k (-1)^i
  \begin{cases}
    [v_0,\dotsc,v_{i-1},v_{i+1},\dotsc,v_k] &\text{if } [v_0,\dotsc,v_{i-1},v_{i+1},\dotsc,v_k] \notin Y,\\
    0 & \text{otherwise}\\
\end{cases}
\end{equation}
\end{definition}
Note that the vertex ordering is preserved by deletion, so the above formula is well-defined.  We call the sign $(-1)^i$ the \emph{orientation} of the face $[v_0,\dotsc,v_{i-1},v_{i+1},\dotsc,v_k]$ within $[v_0,\dotsc,v_k]$.

\begin{proposition}(Completely standard, for instance see \cite[Lemma 2.1]{Hatcher_2002})
The sequence of linear maps $(C_\bullet(X,Y),\partial_\bullet)$ is a chain complex.
\end{proposition}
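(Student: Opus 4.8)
Recall that calling $(C_\bullet(X,Y),\partial_\bullet)$ a chain complex means exactly that $\partial_{k-1}\circ\partial_k=0$ for every $k$; by linearity it is enough to check this on each basis element of $C_k(X,Y)$, i.e. on each $k$-simplex $\sigma=[v_0,\dotsc,v_k]$ of $X$ with $\sigma\notin Y$. Write $\sigma_{\hat i}$ for the $i$-th face $[v_0,\dotsc,v_{i-1},v_{i+1},\dotsc,v_k]$, and for $a<b$ write $\sigma_{\hat a\hat b}$ for the codimension-two face of $\sigma$ obtained by deleting both $v_a$ and $v_b$. Applying the formula in Equation~\eqref{eq:boundary} twice gives
\begin{equation*}
\partial_{k-1}\partial_k\sigma \;=\; \sum_{\substack{0\le i\le k\\ \sigma_{\hat i}\notin Y}}(-1)^i\!\!\sum_{\substack{0\le j\le k-1\\ (\sigma_{\hat i})_{\hat j}\notin Y}}\!\!(-1)^j\,(\sigma_{\hat i})_{\hat j},
\end{equation*}
so every surviving summand is, up to sign, one of the codimension-two faces $\sigma_{\hat a\hat b}$.

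The next step is the classical bookkeeping: for fixed $a<b$ the face $\sigma_{\hat a\hat b}$ is produced in precisely two ways, as $(\sigma_{\hat b})_{\hat a}$ carrying total sign $(-1)^{a+b}$, and as $(\sigma_{\hat a})_{\widehat{b-1}}$ carrying total sign $(-1)^{a+(b-1)}=-(-1)^{a+b}$. In the absolute case $Y=\emptyset$ both always occur and cancel, which is the content of the cited standard fact. In the relative case I must instead show that these two would-be-cancelling contributions are switched on and off \emph{together} by the conditions under the two sums.

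This is the crux, and it is exactly where the hypothesis that $Y$ is a subcomplex is used. The first contribution is present iff $\sigma_{\hat b}\notin Y$ and $\sigma_{\hat a\hat b}\notin Y$; the second iff $\sigma_{\hat a}\notin Y$ and $\sigma_{\hat a\hat b}\notin Y$. Since $\sigma_{\hat a\hat b}$ is a face of both $\sigma_{\hat a}$ and $\sigma_{\hat b}$, and $Y$ is closed under passing to faces, the single condition $\sigma_{\hat a\hat b}\notin Y$ already forces $\sigma_{\hat a}\notin Y$ and $\sigma_{\hat b}\notin Y$. Hence both contributions are present exactly when $\sigma_{\hat a\hat b}\notin Y$ (in which case they cancel) and both absent otherwise; summing over $a<b$ gives $\partial_{k-1}\partial_k\sigma=0$. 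Equivalently and more slickly, one can note that $C_\bullet(X,Y)$ is canonically identified with the quotient chain complex $C_\bullet(X)/C_\bullet(Y)$ — the identification being legitimate precisely because $Y$ is a subcomplex, so $\partial_\bullet$ maps $C_\bullet(Y)$ into itself — whence the relative $\partial^2=0$ is immediate from the absolute one. I do not expect a genuine obstacle here; the only pitfall is overlooking that the two paired terms are forced to have identical ``on/off'' status, which is what the subcomplex property guarantees.
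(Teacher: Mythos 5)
Your proposal is correct. The paper offers no proof of its own here, deferring entirely to the citation of Hatcher, where the relative case is handled by identifying $C_\bullet(X,Y)$ with the quotient complex $C_\bullet(X)/C_\bullet(Y)$ and appealing to the absolute $\partial^2=0$ --- exactly the ``more slick'' route you sketch at the end. Your direct computation is also correct, and is in fact worth having given the paper's Definition \ref{def:chain_space}, which defines $C_k(X,Y)$ concretely by a basis of simplices not in $Y$ (and gives a boundary formula with an explicit ``drop the term if it lands in $Y$'' clause) rather than literally as a quotient; your argument verifies $\partial^2=0$ directly in that model. The one point that genuinely requires care is the one you flag: that for $a<b$ the two would-be-cancelling occurrences of $\sigma_{\hat a\hat b}$ are either both retained or both dropped, which follows from $Y$ being closed under taking faces (so $\sigma_{\hat a\hat b}\notin Y$ already forces $\sigma_{\hat a},\sigma_{\hat b}\notin Y$). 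Both routes you give are sound; nothing is missing.
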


\begin{definition}
  \label{def:relative_simplicial_homology}
If $Y \subseteq X$ is a subcomplex of an abstract simplicial complex, then $H_k(X,Y)=H_k(C_\bullet(X,Y),\partial_\bullet)$ is called the \emph{relative homology of the pair $(X,Y)$}.  We usually write $H_k(X)=H_k(X,\emptyset)$, which is the \emph{simplicial homology} of $X$.  
\end{definition}

\begin{proposition}\cite[Props. 2.9, 2.19]{Hatcher_2002}
  \label{prop:relative_functor}
Each continuous function $f: X \to Z$ from one abstract simplicial complex to another which restricts to a continuous function $Y \to W$ induces a linear map $H_k(X,Y)\to H_k(Z,W)$ for each $k$.  We call $(X,Y)$ and $(Z,W)$ \emph{topological pairs} and $f$ a \emph{pair map} $(X,Y) \to (Z,W)$.
\end{proposition}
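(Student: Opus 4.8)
The plan is to reduce this to the functoriality of \emph{singular} homology, which is classical. Step one is bookkeeping: the chain complex $C_\bullet(X,Y)$ of Definition~\ref{def:chain_space} is exactly the usual relative simplicial chain complex. Indeed, a basis for the quotient $C_k(X)/C_k(Y)$ is given by the $k$-simplices of $X$ that do not lie in $Y$, which is precisely the basis in Definition~\ref{def:chain_space}, and the boundary \eqref{eq:boundary} is the map induced on that quotient. Hence $H_k(X,Y)$ is the ordinary relative simplicial homology of the pair; in particular, by the classical agreement of simplicial and singular homology \cite{Hatcher_2002}, $H_k(X,Y)\cong H_k^{\mathrm{sing}}(|X|,|Y|)$, where $|X|$ denotes the geometric realization.

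Step two replaces $|X|$ by the Alexandrov space $X$ itself, so that $f$ --- which is a priori only a continuous map of Alexandrov spaces, i.e.\ a monotone map of face posets, and in general \emph{not} a simplicial map --- can be applied. Here I would invoke McCord's theorem \cite{mccord1966, clader2009inverse}: there is a weak homotopy equivalence $\mu_X : |X|\to X$ from the geometric realization (which may be taken to be $|\mathrm{sd}(X)|$, each open simplex of $\mathrm{sd}(X)$ being a flag of faces of $X$) onto the Alexandrov space $X$, and $\mu_X$ carries the subrealization $|Y|$ into the subcomplex $Y$, restricting there to the corresponding equivalence $|Y|\to Y$. Therefore $\mu_X$ induces an isomorphism $H_k^{\mathrm{sing}}(|X|)\xrightarrow{\sim} H_k^{\mathrm{sing}}(X)$, and composing with Step one yields a distinguished isomorphism $H_k(X)\cong H_k^{\mathrm{sing}}(X)$. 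Feeding $\mu_X:(|X|,|Y|)\to(X,Y)$ into the five lemma applied to the ladder of long exact sequences of the two pairs upgrades this to an isomorphism $H_k(X,Y)\cong H_k^{\mathrm{sing}}(X,Y)$ of relative groups.

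With these identifications, the conclusion is immediate: the continuous pair map $f:(X,Y)\to(Z,W)$ induces $f_*:H_k^{\mathrm{sing}}(X,Y)\to H_k^{\mathrm{sing}}(Z,W)$ by functoriality of singular homology \cite[Props.~2.9,~2.19]{Hatcher_2002}, and conjugating by the isomorphisms of Step two produces the asserted linear map $H_k(X,Y)\to H_k(Z,W)$. Functoriality in $f$ and the fact that homotopic pair maps induce equal homomorphisms then come for free from the corresponding facts in singular homology. I expect the only real obstacle to be Step two: assembling (and correctly citing) the comparison $H_k(X,Y)\cong H_k^{\mathrm{sing}}(X,Y)$ between the combinatorially defined relative homology and the singular homology of the Alexandrov pair, in particular the relative refinement of McCord's weak equivalence and its compatibility with the subcomplexes. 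One could instead avoid singular homology entirely, noting that the monotone face-poset map underlying $f$ is a genuine simplicial map $\mathrm{sd}(X)\to\mathrm{sd}(Z)$ of barycentric subdivisions (collapsing repeated vertices, as simplicial maps are permitted to do), push chains forward there, and invoke invariance of simplicial homology under subdivision; this trades McCord's theorem for the subdivision chain equivalence but incurs the same need to track the subcomplexes $Y$ and $W$.
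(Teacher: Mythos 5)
The paper itself offers no proof of this proposition, only a citation of Hatcher, so there is no internal argument to compare against. Your proof is correct, and it takes seriously a subtlety that the bare citation papers over: the statement is for arbitrary \emph{continuous} (order-preserving) maps of Alexandrov spaces, not merely simplicial maps, and such maps need not induce chain maps on the complexes $C_\bullet(X,Y)$ of Definition~\ref{def:chain_space}. A concrete obstruction: take $X=Z=\{[v],[w],[v,w]\}$ and let $f$ fix $[v]$, send $[w]$ to $[v]$, and fix the edge $[v,w]$; this is monotone for the face partial order, hence continuous, yet the obvious chain-level assignment gives $\partial_1 f_\#[v,w]=[w]-[v]$ while $f_\#\partial_1[v,w]=0$. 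So a passage through an auxiliary model is genuinely required, and both routes you sketch --- singular homology of the geometric realization bridged to the Alexandrov space by McCord's weak equivalence, or pushing $f$ through barycentric subdivision where it becomes an honest simplicial map --- are sound. Hatcher's Props.~2.9 and 2.19 concern singular homology of topological pairs, so your Step two is precisely the missing bridge identifying that with the combinatorially defined $H_k(X,Y)$; upgrading McCord to the relative setting via the five lemma is routine once one notes $\mu_X$ restricts to $\mu_Y$ over the subcomplex. One observation worth recording: every use of this proposition later in the paper (the restriction maps in Proposition~\ref{prop:homology_sheaf}, the neighborhood filtration, Proposition~\ref{prop:functorish}) involves only identity inclusions or genuine simplicial maps, for which the elementary chain-level argument suffices and neither McCord's theorem nor subdivision invariance is needed.
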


Therefore, relative homology is a functor from the category of topological pairs and pair maps to the category of vector spaces.

\begin{proposition}\cite[Thm. 2.20, Cor. 2.11]{Hatcher_2002} 
  \label{prop:homotopy_invariant}
$H_k(X)$ is homotopy invariant: a homotopy equivalence $X \to Y$ between two abstract simplicial complexes induces isomorphisms $H_k(X) \cong H_k(Y)$ for all $k \geq 0$ . 
\end{proposition}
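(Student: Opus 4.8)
The statement is classical; the route I would take reduces it to the homotopy invariance of \emph{singular} homology, which admits a short self-contained proof, and then transfers the conclusion back to simplicial homology. \textbf{Step 1 (singular homotopy invariance via the prism operator).} For a space $Z$ let $C_\bullet^{\mathrm{s}}(Z)$ denote its singular chain complex and $i_0,i_1\colon Z\to Z\times[0,1]$ the bottom and top inclusions. I would construct the natural \emph{prism operator} $P\colon C_n^{\mathrm{s}}(Z)\to C_{n+1}^{\mathrm{s}}(Z\times[0,1])$ from the usual triangulation of $\Delta^n\times[0,1]$ into the $(n{+}1)$-simplices $[v_0,\dotsc,v_i,w_i,\dotsc,w_n]$ (with $v_j$ at height $0$, $w_j$ at height $1$): for a singular simplex $\sigma\colon\Delta^n\to Z$ set $P(\sigma)=\sum_{i=0}^n(-1)^i\,(\sigma\times\mathrm{id}_{[0,1]})\big|_{[v_0,\dotsc,v_i,w_i,\dotsc,w_n]}$. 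A direct boundary computation yields the chain-homotopy identity $\partial P+P\partial=(i_1)_\#-(i_0)_\#$. Hence if $f\simeq g\colon Z\to W$ through a homotopy $H$, then $f_\#=H_\#(i_0)_\#$ and $g_\#=H_\#(i_1)_\#$ are chain homotopic, so $f_*=g_*$ on $H_\bullet^{\mathrm{s}}$; applying this to a homotopy inverse of a homotopy equivalence, together with functoriality and $(\mathrm{id})_*=\mathrm{id}$, shows that a homotopy equivalence induces isomorphisms on singular homology. The relative case is identical, restricting $P$ to the subspace.

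\textbf{Step 2 (simplicial $=$ singular).} I would then establish the natural isomorphism $H_k(X,Y)\cong H_k^{\mathrm{s}}(|X|,|Y|)$ between the simplicial homology of Section~\ref{sec:relative_homology} and the singular homology of the geometric realizations. The standard argument is an induction on the skeleta $X^{(n)}$: the pairs $(X^{(n)},X^{(n-1)})$ furnish compatible long exact sequences in both theories, and in each theory $H_k(X^{(n)},X^{(n-1)})$ is free on the set of $n$-simplices when $k=n$ and zero otherwise --- this is immediate from the definition of the relative chain groups on the simplicial side, and a consequence of excision applied to the good pair $(|X^{(n)}|,|X^{(n-1)}|)$ on the singular side. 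The five lemma then promotes the isomorphism from $X^{(n-1)}$ to $X^{(n)}$, with the discrete set $X^{(0)}$ as base case; for infinite-dimensional $X$ one passes to the direct limit over $n$, which is legitimate because singular chains are compactly supported and $X$ is locally finite. Naturality in the pair is built into the construction.

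\textbf{Step 3 (assembling), and the main obstacle.} A homotopy equivalence $X\to Y$ of abstract simplicial complexes is by definition a continuous map with a continuous homotopy inverse, so Proposition~\ref{prop:relative_functor} already supplies the induced maps $H_k(X)\to H_k(Y)$; it remains to see they are isomorphisms. Combining Step~1 (for $X$ and $Y$, or for their geometric realizations) with the identification of Step~2 --- and, if one insists on phrasing everything in the Alexandrov topology, with McCord's weak homotopy equivalence $|X|\to X$ \cite{mccord1966} recalled in Section~\ref{sec:history}, which is a homology isomorphism by Steps~1--2 --- yields $H_k(X)\cong H_k(Y)$ for all $k\ge 0$. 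The genuine work is concentrated in Step~2: the skeletal induction, and within it the excision computation for the singular homology of $(|X^{(n)}|,|X^{(n-1)}|)$ together with the verification that homology commutes with the skeletal colimit. Step~1 is a bookkeeping exercise with the prism operator, and Step~3 is formal.
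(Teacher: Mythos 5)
The paper provides no internal proof of this proposition; it is stated with a bare citation to Hatcher's Theorem~2.20 and Corollary~2.11, so there is no argument of the paper's own to compare against. Your reconstruction is a correct and essentially faithful rendering of the standard argument that those citations point to: the prism operator proof of singular homotopy invariance, the skeletal induction establishing the natural isomorphism between simplicial and singular homology, and the formal assembly. The extra point you raise in Step~3 is worth emphasizing: the paper works with the Alexandrov topology on abstract simplicial complexes, whereas Hatcher's result concerns the geometric realization, so the McCord weak homotopy equivalence $|X|\to X$ is genuinely needed to transfer the conclusion, and the paper glosses over this. One small caveat in your phrasing: passing from McCord's \emph{weak} homotopy equivalence to a singular homology isomorphism is not a consequence of ``Steps 1--2'' alone; Step~1 handles only genuine homotopy equivalences, and one needs the separate (stronger) fact that weak homotopy equivalences induce isomorphisms on singular homology (Hatcher Prop.~4.21, via CW approximation or a direct argument). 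This does not affect the correctness of your conclusion, only the attribution of which ingredient does the work.
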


\subsection{Local homology}
\label{sec:local_homology_def}

\begin{definition}(compare \cite[end of Sec. 2.1]{Hatcher_2002}, \cite{MilnorStasheff_1974})
  \label{def:local_homology}
  For an open subset $U \subseteq X$ of an abstract simplicial complex, the \emph{local homology} at $U$ is $H_k(X,X \backslash U)$.  For brevity, we usually write
  \begin{equation*}
    \beta_k (U) = \dim H_k(X,X \backslash U)
  \end{equation*}
  for the \emph{local $k$-Betti number at $U$}.
\end{definition}
 
\begin{proposition} (Excision for abstract simplicial complexes, compare \cite[Lem. 35.1]{Munkres_1984})
  \label{prop:local_excision}
If $U$ is an open set of an abstract simplicial complex $X$, then $H_k(X,X \backslash U)\cong H_k(\cl U, \fr U)$.
\end{proposition}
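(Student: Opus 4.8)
The plan is to show that the two relative chain complexes $C_\bullet(X,X\backslash U)$ and $C_\bullet(\cl U,\fr U)$ are not merely isomorphic but literally equal, so that the claimed isomorphism on homology is immediate.

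First I would record the set-theoretic facts about which simplices lie in which subset. Since $U$ is open, $X\backslash U$ is closed, hence a subcomplex, and $\cl(X\backslash U)=X\backslash U$; also $\cl U$ is a subcomplex, so $\fr U=\cl U\cap\cl(X\backslash U)=\cl U\cap(X\backslash U)$ is an intersection of subcomplexes, hence a subcomplex of $\cl U$. In particular both $C_\bullet(X,X\backslash U)$ and $C_\bullet(\cl U,\fr U)$ are well-defined. Next, the $k$-faces of $X$ that are \emph{not} in $X\backslash U$ are exactly the $k$-faces lying in $U$; and a $k$-face of $\cl U$ fails to lie in $\fr U=\cl U\cap(X\backslash U)$ precisely when it lies in $U$, so — using $U\subseteq\cl U$ — the $k$-faces of $\cl U$ not in $\fr U$ are again exactly the $k$-faces in $U$. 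Hence $C_k(X,X\backslash U)$ and $C_k(\cl U,\fr U)$ are the very same vector space, the one with basis the $k$-simplices of $U$.

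Then I would check that the two boundary operators coincide. Fix a $k$-simplex $\sigma=[v_0,\dots,v_k]$ of $U$. In $C_\bullet(X,X\backslash U)$ formula \eqref{eq:boundary} keeps the $i$-th face $[v_0,\dots,\widehat{v_i},\dots,v_k]$ exactly when it is not in $X\backslash U$, i.e.\ when it lies in $U$. In $C_\bullet(\cl U,\fr U)$ the same formula keeps that face exactly when it is not in $\fr U$; but every face of $\sigma$ lies in $\cl U$ (a subcomplex containing $\sigma$), so for such a face ``not in $\fr U$'' is again equivalent to ``in $U$''. The two maps $\partial_k$ therefore agree term by term, with identical signs. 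Consequently the chain complexes $(C_\bullet(X,X\backslash U),\partial_\bullet)$ and $(C_\bullet(\cl U,\fr U),\partial_\bullet)$ are identical, and so $H_k(X,X\backslash U)=H_k(\cl U,\fr U)$ for every $k$.

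I expect no serious obstacle; the only point requiring care is the reduction of the face-deletion rule for $\fr U$ to that for $X\backslash U$, which is precisely the observation that every face of a simplex in $U$ automatically lies in $\cl U$. This is also why the combinatorial ``excision'' here is cleaner than its topological counterpart, which needs a hypothesis such as $\cl U\subseteq\interior A$: in the Alexandrov/simplicial setting the relevant containment comes for free.
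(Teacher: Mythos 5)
Your proof is correct and takes essentially the same route as the paper: both show that $C_\bullet(X,X\backslash U)$ and $C_\bullet(\cl U,\fr U)$ are literally the same chain complex (same basis, the $k$-simplices of $U$, and same boundary operator), so the homologies coincide. Your term-by-term verification that the boundary maps agree via the explicit formula~\eqref{eq:boundary} is a slightly more concrete version of the paper's argument, which instead notes that the unrelativized boundary map restricts to the closed subcomplexes $\cl U$ and $\fr U$.
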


Because we assume that abstract simplicial complexes are locally finite, Proposition \ref{prop:local_excision} indicates that local homology can be computed using finite dimensional linear algebra provided the open set $U$ is finite.

\begin{proof}
  It suffices to show that the chain complexes associated to $H_k(X,X \backslash U)$ and $H_k(\cl U, \fr U)$ are exactly the same.  In both cases, the chain spaces $C_k(X,X\backslash U)$ and $C_k(\cl U, \fr U)$ both consist of a vector space whose basis is the set of simplices in $U$.  Also observe that since $\cl U$ is a closed subcomplex of $X$, the boundary map $\partial_k: C_k(X) \to C_{k-1}(X)$ restricts\footnote{This does not occur for general topological spaces!} to a map $\partial_k: C_k(\cl U) \to C_{k-1}(\cl U)$. Likewise, since $\fr U$ is a closed subcomplex of $X \backslash U$, the boundary map $\partial_k: C_k(X\backslash U)\to C_{k-1}(X \backslash U)$ restricts to a map $\partial_k: C_k(\fr U) \to C_{k+1}(\fr U)$.  Collecting these facts, we conclude that $\partial_k: C_k(X,X \backslash U) \to C_{k-1}(X,\backslash U)$ restricts to $\partial_{k} : C_k(\cl U,\fr U) \to C_{k-1}(\cl U, \fr U)$.  But, we previously established that the domains and ranges of these maps are identical, so the maps must in fact be identical.  Having shown that the chain complexes are identical, we conclude that their homologies must also be identical.
\end{proof}

As an aside, we note that this is somewhat stronger than the usual excision principle, a usual formulation of which reads:
\begin{proposition} (Excision principle, \cite[Thm. 2.20]{Hatcher_2002})
  \label{prop:usual_excision}
If $U$ and $V$ are sets in a topological space $X$ for which $\cl V \subseteq X \backslash (\cl U)$, then $H_k(X,X \backslash U)\cong H_k(X \backslash V,X \backslash (U\cup V))$.
\end{proposition}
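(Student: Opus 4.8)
The plan is to prove this the same way Proposition~\ref{prop:local_excision} was proved: by exhibiting a literal identification of relative chain complexes, so that the two homologies are equal on the nose rather than merely isomorphic. (A second route, sketched at the end, is to apply Proposition~\ref{prop:local_excision} to both sides.) Throughout I would take $U$ and $V$ to be open, which is the case of interest for local homology and which guarantees that $X \backslash U$, $X \backslash V$, and $X \backslash (U \cup V)$ are all subcomplexes, so that both pairs $(X, X\backslash U)$ and $(X\backslash V, X\backslash(U\cup V))$ are legitimate pairs of abstract simplicial complexes in the sense of Definition~\ref{def:chain_space}.

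First I would extract from the hypothesis $\cl V \subseteq X \backslash \cl U$ the two facts that do all the work: (i) $U \cap V = \emptyset$, since $U \subseteq \cl U$ and $V \subseteq \cl V$ are disjoint; and (ii) if $\tau$ is a face of some simplex $\sigma \in U$, then $\tau \notin V$, because $\tau \subseteq \sigma$ implies $\tau \in \cl\{\sigma\} \subseteq \cl U$, which is disjoint from $\cl V \supseteq V$. Fact (ii) is the crucial one and is really the content of the excision hypothesis in combinatorial form.

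Second I would compare chain spaces. By definition $C_k(X, X\backslash U)$ is free on the $k$-simplices of $X$ that lie in $U$. A $k$-simplex $\sigma$ is a basis element of $C_k(X\backslash V, X\backslash(U\cup V))$ precisely when $\sigma \notin V$ and $\sigma \notin X\backslash(U\cup V)$, i.e.\ when $\sigma \in (U\cup V)\backslash V = U$ by fact (i); so the two chain spaces have identical bases in every degree. Then I would compare boundary maps: for a basis simplex $\sigma \in U$ with $i$-th face $\tau_i$, the term $(-1)^i \tau_i$ appears in $\partial_k \sigma$ inside $C_\bullet(X, X\backslash U)$ iff $\tau_i \in U$, and inside $C_\bullet(X\backslash V, X\backslash(U\cup V))$ iff $\tau_i \in U\cup V$; by fact (ii) we have $\tau_i \notin V$, so the two conditions coincide. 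Hence the chain complexes $C_\bullet(X,X\backslash U)$ and $C_\bullet(X\backslash V, X\backslash(U\cup V))$ are literally the same, and taking homology gives the claimed isomorphism (indeed equality).

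The main obstacle is not depth but bookkeeping: one must track exactly which simplices belong to which of $U$, $V$, $X\backslash U$, $X\backslash(U\cup V)$, and recognize that the hypothesis $\cl V \cap \cl U = \emptyset$ is precisely what guarantees that deleting $V$ disturbs neither the generating simplices nor the incidence coefficients. A minor technical caveat is that the fully general, purely topological statement quoted from Hatcher allows arbitrary subsets, whereas the simplicial argument above needs $V$ (and $U$) open so that $X\backslash V$ is a subcomplex; this is harmless for the intended applications. If one instead prefers to derive the result from Proposition~\ref{prop:local_excision}, one applies that proposition with ambient complex $X\backslash V$ and open set $U$ (noting $U$ is open in $X\backslash V$ since $U\cap V=\emptyset$), and the work then shifts to verifying $\cl_{X\backslash V} U = \cl U$ and $\fr_{X\backslash V} U = \fr U$ — identities that, once again, hold exactly because $\cl U$ and $\cl V$ are disjoint.
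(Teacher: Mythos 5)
The paper does not prove this proposition: it is quoted directly from Hatcher (Theorem~2.20), where $X$ is an \emph{arbitrary} topological space, $U$ and $V$ are \emph{arbitrary} subsets, and $H_k$ means relative \emph{singular} homology. Hatcher's proof is genuinely nontrivial (barycentric subdivision and the small-simplices argument), which is exactly why the paper simply cites it. Your argument establishes a strictly weaker statement: $X$ an abstract simplicial complex, $U$ and $V$ open, $H_k$ relative simplicial homology in the sense of Definition~\ref{def:relative_simplicial_homology}. Framing that restriction as a ``minor technical caveat'' understates it --- it is precisely what separates a one-paragraph chain identification from Hatcher's substantially harder singular-chain argument. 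So you are not following the paper's approach (the paper offers none); you are giving a simplicial-only shortcut in the spirit of its proof of Proposition~\ref{prop:local_excision}.

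Within that simplicial, open-set setting the argument is correct, and both of your routes (the direct chain identification and the reduction to Proposition~\ref{prop:local_excision}) go through. In fact you are using more hypothesis than you need. Once $U$ and $V$ are open and disjoint, $X\backslash V$ is a subcomplex, so every face $\tau_i$ of a simplex $\sigma \in U \subseteq X\backslash V$ already lies in $X\backslash V$ and hence $\tau_i \notin V$; the disjoint-closures hypothesis $\cl U \cap \cl V = \emptyset$ that you invoke as fact~(ii) is not needed for the chain-level identification. Since disjoint open sets in the Alexandrov topology can perfectly well have intersecting closures (e.g.\ $\st [a,b]$ and $\st [b,c]$ in the boundary of a triangle), your argument really proves simplicial excision under the weaker hypothesis $U \cap V = \emptyset$. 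This refines, rather than contradicts, the paper's remark immediately after the statement that Proposition~\ref{prop:local_excision} is stronger than classical excision: the combinatorial version simply needs less.
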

We could attempt to derive Proposition \ref{prop:local_excision} from Proposition \ref{prop:usual_excision} by taking an open set $U$ and $V = \interior X\backslash U$, but this choice of $V$ violates the hypotheses of Proposition \ref{prop:usual_excision} because $\cl V = X \backslash U$ which is not generally a subset of $X\backslash (\cl U)$.

\begin{proposition} (see also \cite{Brown_2017} for a similarly elementary proof)
\label{prop:homology_sheaf}
The functor $U \mapsto H_k(X,X \backslash U)$ defines a sheaf $\shf{H}_k$; called the \emph{$k$-homology sheaf}.
\end{proposition}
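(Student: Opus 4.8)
The plan is to verify the two sheaf axioms — the \emph{identity/locality} axiom and the \emph{gluing} axiom — directly for the presheaf $\shf{H}_k$ on the Alexandrov space $X$, after first confirming that $U \mapsto H_k(X, X\backslash U)$ really is a presheaf. For the presheaf structure, note that if $U \subseteq U'$ are open sets, then $X\backslash U' \subseteq X\backslash U$, both are subcomplexes (closed sets) of $X$, and the identity map $X \to X$ restricts to an inclusion $X\backslash U' \hookrightarrow X\backslash U$; by Proposition \ref{prop:relative_functor} this induces a restriction map $\rho^{U'}_U : H_k(X,X\backslash U') \to H_k(X,X\backslash U)$, and functoriality of relative homology (the composition in Proposition \ref{prop:relative_functor}) gives $\rho^{U}_U = \mathrm{id}$ and $\rho^{U'}_U \circ \rho^{U''}_{U'} = \rho^{U''}_U$. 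So $\shf{H}_k$ is a presheaf of vector spaces.

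The key simplification — and the reason this works so cleanly on abstract simplicial complexes — is Proposition \ref{prop:local_excision}: $H_k(X, X\backslash U) \cong H_k(\cl U, \fr U)$, whose underlying chain complex is the subquotient of $C_\bullet(X)$ with basis exactly the simplices lying in $U$. First I would make this identification of the chain level explicit and compatible with restrictions: for $U \subseteq U'$ open, the simplices in $U$ form a subset of the simplices in $U'$, and the restriction map $\rho^{U'}_U$ is, at the chain level, the quotient map that kills the basis simplices in $U'\backslash U$ and keeps those in $U$. Thus $C_\bullet(X, X\backslash U)$ behaves like a ``restriction of chains to $U$'' functor, with restriction maps given by coordinate projection onto the simplices that survive. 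This is the concrete handle I will use for both axioms.

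Now let $U = \bigcup_\alpha U_\alpha$ be an open cover by open sets, noting (crucially) that in the Alexandrov topology each $\st\sigma$ is open, so $\{\st\sigma : \sigma \in U\}$ is itself an open refinement — in fact each simplex $\sigma\in U$ has a \emph{minimal} open neighborhood $\st\sigma$, and $\sigma$ lies in some $U_\alpha$, hence $\st\sigma \subseteq$ that $U_\alpha$. For \textbf{locality}: if $s \in H_k(X,X\backslash U)$ restricts to $0$ in every $H_k(X, X\backslash U_\alpha)$, lift $s$ to a cycle $z \in C_k(X,X\backslash U)$; each basis simplex of $C_k(X,X\backslash U)$ is a $k$-simplex $\sigma \subseteq U$, and since $\sigma \in U_{\alpha}$ for some $\alpha$, the coefficient of $\sigma$ in $z$ survives in the chain restriction to $U_\alpha$. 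Here the hard part shows up: vanishing of the restricted \emph{homology} class does not immediately give a uniform bounding chain, so I would instead argue cohomologically/combinatorially — because $\shf{H}_k$ is determined by its stalks at the minimal neighborhoods $\st\sigma$ (Lemma \ref{lem:aspace} ensures arbitrary intersections are open, so stalks are literally $H_k(X, X\backslash \st\sigma)$), a section is zero iff all its germs are zero, and germs at $\st\sigma$ are computed inside whichever $U_\alpha \ni \sigma$. For \textbf{gluing}: given compatible $s_\alpha \in H_k(X, X\backslash U_\alpha)$ agreeing on overlaps, I would build the glued class by specifying its germ at each minimal neighborhood $\st\sigma$, $\sigma\subseteq U$, using any $U_\alpha$ containing $\sigma$ (well-defined by compatibility on $U_\alpha\cap U_\beta \supseteq \st\sigma$ when $\sigma$ lies in both), then invoke the fact that on an Alexandrov space a compatible family of germs at the minimal neighborhoods assembles to a genuine section — equivalently, that $C_\bullet(X, X\backslash U) = \varprojlim_\sigma C_\bullet(X, X\backslash \st\sigma)$ is a limit of chain complexes indexed by the simplices of $U$, so cycles and boundaries glue coordinatewise.

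The main obstacle I anticipate is not the combinatorics of which simplex lives in which $U_\alpha$ — that is routine given minimal neighborhoods — but rather handling the passage from chain-level gluing to \emph{homology-level} gluing cleanly: a priori, homology does not commute with the infinite inverse limit over simplices. The cleanest fix is to observe that, because each $U_\alpha$ is open hence a union of stars, and because $C_\bullet(X,X\backslash U)$ is a \emph{direct sum} over the $k$-simplices of $U$ at each degree with boundary maps respecting the poset of simplices in $U$, the relevant limit is over a cofiltered (indeed, discrete-at-top) diagram where the transition maps are surjective coordinate projections; for such Mittag-Leffler systems the relevant $\varprojlim^1$ obstruction vanishes and homology does commute with the limit. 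Alternatively — and this is probably the presentation I would adopt in the paper — I would bypass $\varprojlim^1$ entirely by checking the sheaf axioms only against the canonical basis of minimal open sets $\{\st\sigma\}$ (legitimate since they form a basis for the Alexandrov topology), where every cover of $\st\sigma$ contains $\st\sigma$ itself, making both axioms trivial, and then invoke the standard fact that a presheaf satisfying the sheaf axioms on a basis extends uniquely to a sheaf. I would flag that last move explicitly as the crux and cite it as standard.
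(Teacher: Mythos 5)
Your proposal takes a genuinely different route from the paper, and in two places that route has gaps that the paper's argument neatly sidesteps.

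For the separation (monopresheaf) axiom, your plan is to argue ``a section is zero iff all its germs are zero, and germs at $\st\sigma$ are computed inside whichever $U_\alpha\ni\sigma$.'' But the assertion that a section is determined by its germs \emph{is} the separation axiom for the presheaf $\shf{H}_k$ --- this is exactly what has to be proved, so the argument as stated is circular. The paper instead proves something direct and stronger: the restriction maps $H_k(X,X\backslash U)\to H_k(X,X\backslash\st\sigma)$ are injective, by observing that a nonzero relative cycle $z$ is a linear combination of simplices of $U$ whose coefficient on some $\sigma\in U$ survives unchanged under the chain-level projection to $\st\sigma$. That coefficient argument lives entirely at the chain level and never touches the question you worry about (whether vanishing of restricted homology classes produces a uniform bounding chain).

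For the gluing axiom, your worry about $\varprojlim^1$ and about ``homology not commuting with the inverse limit over simplices'' is a symptom of having chosen the wrong tool. The paper sets up a short exact sequence of \emph{relative chain complexes}
\[
0 \to C_k(X,X\backslash(U\cup V)) \to C_k(X,X\backslash U)\oplus C_k(X,X\backslash V) \to C_k(X,X\backslash(U\cap V)) \to 0,
\]
and reads existence of gluings off the resulting Mayer--Vietoris long exact sequence; this works two opens at a time and never sees a limit. Your alternative fallback --- verifying the sheaf axioms only on the basis $\{\st\sigma\}$ and then invoking the extension-from-a-basis theorem --- does not finish the job either: that theorem produces \emph{some} sheaf whose values on basic opens agree with your presheaf, but it does not tell you that the presheaf $U\mapsto H_k(X,X\backslash U)$ itself coincides with that extension on non-basic opens. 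Showing that agreement is precisely the content of the sheaf condition $H_k(X,X\backslash U)\cong\varprojlim_{\sigma\in U}H_k(X,X\backslash\st\sigma)$, so the basis trick quietly re-poses the original problem rather than solving it. If you want to keep your poset-theoretic framing, you would still need to supply the Mayer--Vietoris (or an equivalent) input to make the identification of $\shf{H}_k(U)$ with the limit over stalks an actual theorem rather than a definition.
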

\begin{proof}

  \emph{Restriction maps:} Suppose $\sigma \subseteq \tau$ are two faces of $X$.  Then
  \begin{eqnarray*}
    \sigma & \subseteq & \tau \\
    \st \sigma & \supseteq & \st \tau\\
    X \backslash \st \sigma & \subseteq & X \backslash \st \tau
  \end{eqnarray*}
  so there is an inclusion of topological pairs
  \begin{equation*}
  (X,X \backslash \st \sigma) \hookrightarrow (X,X \backslash \st \tau)
  \end{equation*}
  which induces linear maps (Proposition \ref{prop:relative_functor} in Section \ref{sec:relative_homology})
  \begin{equation*}
    H_k(X,X \backslash \st \sigma) \to H_k(X,X \backslash \st \tau),
  \end{equation*}
  one for each $k$.  These linear maps form the restriction maps for the sheaf since the topology on $X$ is generated by the stars over faces.  

  \emph{Monopresheaf:} (compare \cite{MilnorStasheff_1974}; it is much harder to show that one obtains a sheaf of local \emph{singular} homology) To show the uniqueness of gluings it is sufficient to show that the restriction maps are injective. Suppose we have $[z]\in H_k(X,X\backslash U)$ with $[z] \not= 0$ for some relative cycle $z$.  Observe that $z$ is a linear combination of simplices in $U$.  By assumption, at least one coefficient in this linear combination is nonzero.  Consider a simplex $\sigma \in U$ whose coefficient in $z$ is nonzero.  Then under the induced map $H_k(X,X\backslash U) \to H_k(X,X\backslash \st \sigma)$, this coefficient remains unchanged and thus remains nonzero. 
     
  \emph{Conjunctive:} Suppose that there are two classes $[x]\in H_k(X,X \backslash U)$, $[y]\in H_k(X,X \backslash V)$ whose restrictions to $U\cap V$ are equal; we must show that there exists a $[z]\in H_k(X,X \backslash (U\cup V))$ that restricts to $[x]$ on $U$ and $[y]$ on $V$.  Using the appropriate inclusions of topological pairs, we can set up a short exact sequence of relative $k$-chains
  \begin{equation*}
    0 \to C_k(X,X \backslash (U \cup V)) \to C_k(X,X \backslash U) \oplus C_k(X,X \backslash V) \to C_k(X,X \backslash (U \cap V)) \to 0
  \end{equation*}
  where the map to $C_k(X,X \backslash (U \cap V))$ computes the difference of the two restrictions.  The resulting long exact sequence
  \begin{equation*}
    \dotsb \to H_k(X,X \backslash (U \cup V)) \to H_k(X,X \backslash U) \oplus H_k(X,X \backslash V) \to H_k(X,X \backslash (U \cap V)) \to \dotsb
  \end{equation*}
  does precisely what we want: since $([x],[y])$ in the middle space lies in the kernel of the map to $H_k(X,X \backslash (U \cap V))$, it must be the image of some $[z]\in H_k(X,X \backslash (U\cup V))$.
\end{proof}

\begin{corollary}
\label{cor:lh_sections}
Global sections of the homology sheaf are the homology classes of the abstract simplicial complex.  On the other hand, reduced homology classes are obtained as local sections over sets of the form $X \backslash \{\sigma\}$ for any $x\in X$.
\end{corollary}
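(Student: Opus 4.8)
The plan is to unwind the definition of the homology sheaf $\shf{H}_k$ in each of the two cases and reduce to standard facts about relative homology. First I would observe that the whole complex $X$ is itself an open set of the Alexandrov topology (it is the union of the stars over all of its vertices, since every face is nonempty), so it makes sense to speak of the sheaf value there. By the very definition of $\shf{H}_k$, and since Proposition~\ref{prop:homology_sheaf} already shows that $U \mapsto H_k(X, X\backslash U)$ is a sheaf (so no sheafification is involved), the global sections are $\shf{H}_k(X) = H_k(X, X\backslash X) = H_k(X,\emptyset) = H_k(X)$, the ordinary simplicial homology. This half is essentially a tautology once one notes that $X$ is open and that the sheaf is literally the functor $U\mapsto H_k(X,X\backslash U)$.

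For the second half, fix a vertex $v$ of $X$. Because a $0$-simplex has no proper nonempty faces, the singleton $\{[v]\}$ is a subcomplex, hence closed, so $U:=X\backslash\{[v]\}$ is open and $\shf{H}_k(U) = H_k\bigl(X, X\backslash U\bigr) = H_k\bigl(X,\{[v]\}\bigr)$. It then remains to identify $H_k(X,\{[v]\})$ with the reduced homology $\widetilde{H}_k(X)$. I would do this via the long exact sequence of the pair $(X,\{[v]\})$: a point has trivial reduced homology in every degree, so the sequence collapses to isomorphisms $H_k(X,\{[v]\})\cong\widetilde{H}_k(X)$ for all $k$, with homotopy invariance (Proposition~\ref{prop:homotopy_invariant}) allowing one to replace $\{[v]\}$ by any contractible subcomplex. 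This is the only step that is not purely formal, although it is entirely classical \cite{Hatcher_2002}. More generally, deleting the closed simplex $\cl\{\sigma\}$ spanned by an arbitrary face $\sigma$ — which is a cone, hence contractible, and closed — gives the same conclusion, and this is the reading under which the statement holds for every $\sigma\in X$ rather than only for vertices.

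I do not expect a genuine obstacle here; the only thing requiring care is the point-set bookkeeping. One must check that the set being removed is really a closed subcomplex, so that its complement is open and $\shf{H}_k$ is being evaluated at a legitimate open set — which is exactly where the distinction between deleting a single vertex and deleting an entire closed simplex enters — and one must make sure the homology conventions in Definition~\ref{def:relative_simplicial_homology} are the unreduced ones, so that $H_k(X,\emptyset)$ is genuinely unreduced homology and the contrast with the second statement is meaningful. Granting these, both assertions follow immediately by unwinding definitions and applying the long exact sequence of a pair.
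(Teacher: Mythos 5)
Your proof takes essentially the same route as the paper: unwind the definition of $\shf{H}_k$ at $X$ to get $H_k(X,\emptyset)=H_k(X)$, then at $X\backslash\{\sigma\}$ to get $H_k(X,\{\sigma\})$ and identify this with reduced homology (the paper simply cites \cite[Ex.~2.18]{Hatcher_2002} where you spell out the long exact sequence of the pair). You have, however, caught a genuine imprecision that the paper glosses over: in the Alexandrov topology, $X\backslash\{\sigma\}$ is open exactly when $\{\sigma\}$ is a closed subcomplex, which happens only when $\sigma$ is a vertex (a singleton containing a higher-dimensional simplex fails to contain that simplex's proper faces and hence is not a subcomplex). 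Your remark that the correct reading for a general face $\sigma$ is to delete $\cl\{\sigma\}$ — a contractible, closed cone — and invoke homotopy invariance is exactly what is needed to make the paper's ``for any $\sigma\in X$'' literally true, and it is a worthwhile correction to the statement as written.
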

\begin{proof}
Given Proposition \ref{prop:homology_sheaf}, we need only compute 
\begin{eqnarray*}
\shf{H}_k(X) &=& H_k(X,X \backslash X) \\ 
&=& H_k(X,\emptyset) \\ &=& H_k(X).
\end{eqnarray*}
Nearly the same calculation works for reduced homology classes, yielding
\begin{equation}
\shf{H}_k(X\backslash \{\sigma\}) = H_k(X,\{\sigma\})
\end{equation}
which is reduced homology (see \cite[Ex. 2.18]{Hatcher_2002}).
\end{proof}

\begin{remark}
If $X$ is an abstract simplicial complex whose geometric realization is an $n$-dimensional manifold, then the $n$-homology sheaf is usually called the \emph{orientation sheaf} \cite{Kashiwara_1990,Milnor_1963}.
\end{remark}

As we'll see in Section \ref{sec:stratification_detection}, local homology detects stratifications of triangulated manifolds.  From a practical standpoint this can be difficult to apply using sampled data.  The following elegant proposition shows that the local homology of a sufficiently nice topological space is described by the local homology of abstract simplicial complexes.

\begin{proposition}(Not explicitly stated as a theorem, but proven in Section 7 of \cite{Bendich_2007})
  \label{prop:recovery}
  Let $X$ be a locally compact subspace of $\mathbb{R}^n$, $\col{U}$ be an open cover of $X$, and $U_\alpha$ be an $\alpha$-offset of an open subset $U \in \col{U}$.  Then
  \begin{equation*}
    H_k(U_\alpha \cap B_r, U_\alpha \cap \partial B_r) \cong H_k(K,K_0),
  \end{equation*}
  where $B_r$ is a radius $r>0$ ball around some point in $U_\alpha$, and $K$ is a particular abstract simplicial complex (the nerve of $\col{U}$) and $K_0$ is a subcomplex of $K$.
\end{proposition}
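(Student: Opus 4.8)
The plan is to identify $H_k(U_\alpha \cap B_r, U_\alpha \cap \partial B_r)$ with the relative homology of a nerve by means of a \emph{relative Nerve Theorem}, and then to recognize that nerve as (a subcomplex of) the nerve of $\mathcal{U}$.

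First I would pass to a finite local picture. Since $X$ is locally compact and $\overline{B_r}$ is compact, only finitely many members of $\mathcal{U}$ have $\alpha$-offsets meeting $U_\alpha \cap \overline{B_r}$, so I may replace $\mathcal{U}$ by that finite subcollection. Set $A = U_\alpha \cap B_r$ and $B = U_\alpha \cap \partial B_r$, and form the covers $\mathcal{A} = \{\, V_\alpha \cap B_r : V \in \mathcal{U}\,\}$ of $A$ and $\mathcal{B} = \{\, V_\alpha \cap \partial B_r : V \in \mathcal{U}\,\}$ of $B$, where $V_\alpha$ denotes the $\alpha$-offset. Because each element of $\mathcal{B}$ is the corresponding element of $\mathcal{A}$ intersected with $\partial B_r$, there is a morphism of covers $\mathcal{B}\to\mathcal{A}$ and hence a simplicial inclusion of nerves $N(\mathcal{B}) \hookrightarrow N(\mathcal{A})$; write $K = N(\mathcal{A})$ and $K_0 = N(\mathcal{B})$.

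Next comes the geometric core: verifying that $\mathcal{A}$ and $\mathcal{B}$ are \emph{good covers}, i.e. every nonempty finite intersection of their members is contractible. A finite intersection of $\alpha$-offsets is a sublevel set of a distance function, and intersecting it with the convex ball $B_r$ leaves a set that deformation retracts onto a point by straight-line homotopies — this is precisely the property the offset construction is designed to supply, and it is where the choice of $\alpha$ (and of $r$) adapted to the geometry is used. The corresponding claim for $\mathcal{B}$ is more delicate since the sphere $\partial B_r$ is not convex; one arranges, by the choice of $r$, that the relevant pieces of $\partial B_r$ lie in an open hemisphere (hence are contractible), or transports the retractions along radial rays. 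With the good-cover property in hand, a partition of unity subordinate to $\mathcal{A}$, chosen compatibly with $\mathcal{B}$, produces a map of pairs $f\colon (A,B)\to (|K|,|K_0|)$ that is a homotopy equivalence on $A$ and on $B$; comparing the long exact sequences of $(A,B)$ and $(|K|,|K_0|)$ via the five lemma then gives $H_k(A,B)\cong H_k(|K|,|K_0|)=H_k(K,K_0)$.

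Finally I would identify $(K,K_0)$ with the nerve of $\mathcal{U}$ and a subcomplex thereof: for admissible $\alpha$ and $r$, a family $V^{(0)},\dots,V^{(j)}\in\mathcal{U}$ has $\bigcap_i V^{(i)}_\alpha \cap B_r \neq \emptyset$ exactly when $\bigcap_i V^{(i)} \neq \emptyset$ in the relevant neighborhood of the basepoint, so $K$ is a full subcomplex isomorphic to the nerve of $\mathcal{U}$, and $K_0$ singles out the simplices whose offsets also reach $\partial B_r$. The main obstacle is the good-cover verification — especially handling the non-convex sphere $\partial B_r$ — together with pinning down the quantitative conditions on $\alpha$ and $r$ that make the contractibility statements and the combinatorial identification of the nerves hold simultaneously; the remaining steps (partition of unity, five lemma, naturality of the nerve equivalence) are formal.
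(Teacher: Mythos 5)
The paper does not actually reprove this proposition; it cites Section~7 of Bendich et al.~\cite{Bendich_2007}, where the argument is carried out for the specific situation of a point sample $P$, with the cover being by metric balls of radius $\alpha$ around the sample points and the nerve being the \v{C}ech (equivalently, via alpha shapes, the Delaunay) complex. Your strategy --- cover $A = U_\alpha \cap B_r$ and $B = U_\alpha \cap \partial B_r$, verify the covers are good, apply a relative nerve theorem, and compare long exact sequences via the five lemma --- is the same high-level strategy, so the proposal is in the right spirit. However, two of its load-bearing steps are stated in a generality where they are false and would need the ball-cover specialization to go through.

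First, the good-cover claim. You assert that ``a finite intersection of $\alpha$-offsets is a sublevel set of a distance function, and intersecting it with the convex ball $B_r$ leaves a set that deformation retracts onto a point by straight-line homotopies.'' This is not true for $\alpha$-offsets of arbitrary sets $V$: the sublevel set $\{x : d(x,V) < \alpha\}$ is generally neither convex nor star-shaped (the $\alpha$-offset of a circle is an annulus), and straight-line retractions need not stay inside it. What actually makes the nerve lemma applicable in~\cite{Bendich_2007} is that the cover elements are single balls, so every finite intersection with $B_r$ is an intersection of convex sets and hence convex. In the generality of your $\mathcal{U}$, the good-cover property is an additional hypothesis, not a consequence. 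Second, the good-cover claim for the sphere cover $\mathcal{B}$ is genuinely handwaved: ``arranges... that the relevant pieces of $\partial B_r$ lie in an open hemisphere'' is not automatic for a branch point whose branches exit $B_r$ in spread-out directions, and ``transports the retractions along radial rays'' does not preserve $\alpha$-offsets. This is precisely the step that requires careful work in~\cite{Bendich_2007}. Finally, the identification $N(\mathcal{A}) \cong$ nerve of $\mathcal{U}$ is not a formality --- cover elements can intersect outside $B_r$ but not inside, so the two nerves need not agree --- and you would need to prove it for the specific $\alpha,r$ regime rather than simply assert admissibility. To summarize: the scaffolding (relative nerve theorem, five lemma) is right, but the contractibility verifications that carry the whole argument are either incorrect as stated or deferred, and the gap is exactly the content of the cited result.
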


\begin{figure}
\begin{center}
\includegraphics[width=4in]{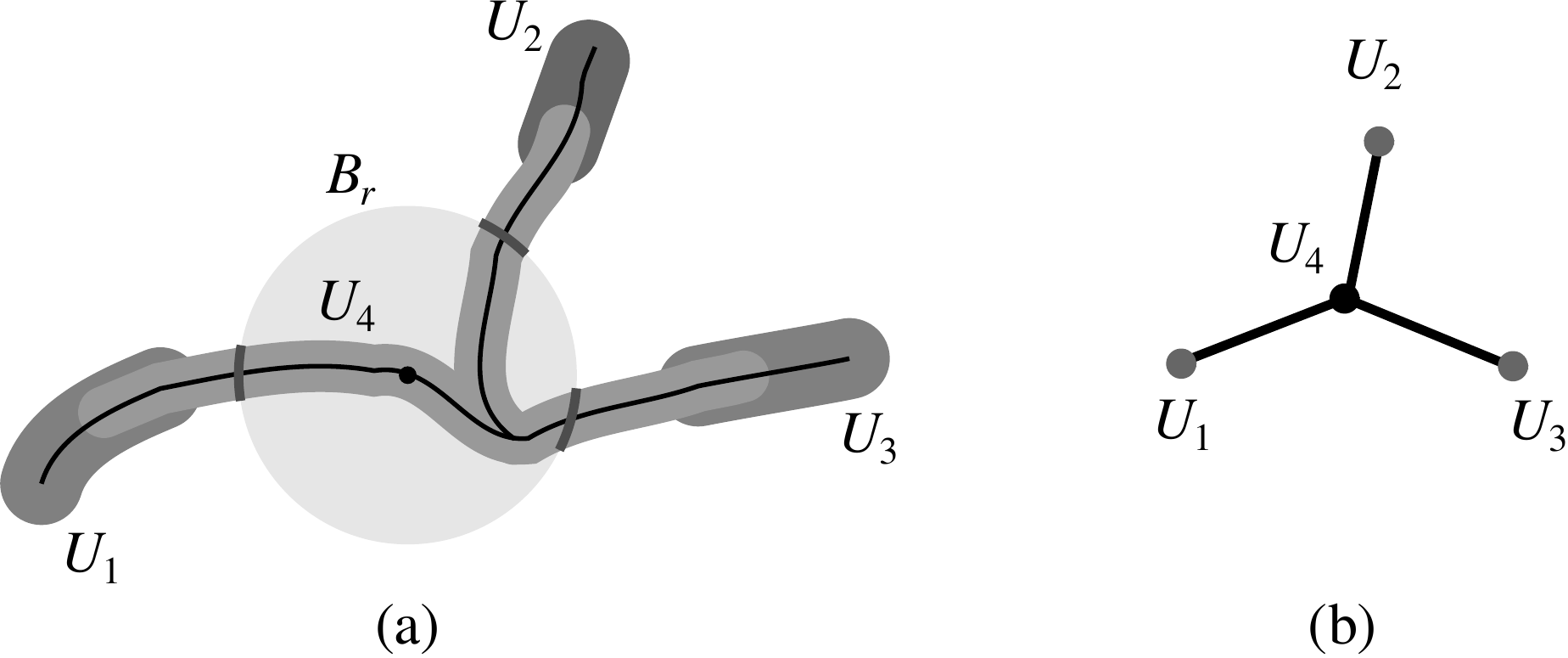}
\caption{A locally compact subspace of $\mathbb{R}^2$ covered by four open sets (a) and its simplicial complex model (b)}
\label{fig:recovery_example}
\end{center}
\end{figure}

\begin{example}
  \label{eg:recovery_example}
  Consider the space shown in Figure \ref{fig:recovery_example}(a), which is covered by four open sets, $U_1$, $U_2$, $U_3$, and $U_4$.  In that figure, consider the open ball $B_r$ centered on a point in the space and the intersection of $B_r$ and $U_r$.  Notice that although the intersection $U_4 \cap \partial B_r$ is not open, it deformation retracts to the open intersection $U_4 \cap (U_1 \cup U_2 \cup U_3)$.  The nerve $K$ of the open cover is shown in Figure \ref{fig:recovery_example}(b), which consists of four vertices and three edges.  Of the vertices, the local homology at the vertex for $U_4$ is a model for the local homology near the branch point of the space contained in $B_r$.  Specifically, in the nerve, the three vertices corresponding to $U_1$, $U_2$, and $U_3$ form the complex $K_0$ in Proposition \ref{prop:recovery}.
\end{example}

\section{Local homology of graphs}
\label{sec:graphs}

Let $G=(V,E)$ be an undirected graph with vertex set $V$ and edge set $E$.  We recognize immediately there is a bijective map between $G$ and a $1$-dimensional abstract simplicial complex with $0$-simplices corresponding to $V$ and $1$-simplices corresponding to $E$. We will freely move between the context of $G$ and its associated abstract simplicial complex in the following discussion using $v$ to represent a vertex and $[v]$ its corresponding $0$-simplex.

It is useful to define the open and closed neighborhoods for a vertex $v \in V$ to relate the combinatorial graph structure to the corresponding topology around $\st [v]$. To prevent equivocation we will use the word \emph{neighborhood} within the context of a graph and open set or star within the context of the topology of the associated abstract simplicial complex.

\begin{definition}
  \label{def:graph_neighborhood}
The \emph{open neighborhood\footnote{Sometimes in the literature $N(v)$ and $\overline{N}(v)$ are used to denote
only set of vertices in the neighborhood.} of a vertex $v \in V$} is the subgraph of $G$
induced by all neighboring vertices of $v$
\begin{equation*}
  N(v) = G\left[ \{w \in V : (v,w) \in E \} \right],
\end{equation*}
where the notation $G[ W ]$ indicates the graph induced by a set $W$ of vertices. 
Note that $v$ is not in $N(v)$ since $G$ is a simple graph.   Moreover, $N(v)$ does not include any edge incident to $v$.  We include $v$ and the edges incident to it in the \emph{closed neighborhood of $v$} which is defined as
\begin{equation*}
  \overline{N}(v) = G\left[ \{v\} \cup \{w \in V : (v,w) \in E \} \right].
\end{equation*}
\end{definition}

\begin{proposition}
In the flag complex $F(G)$, the neighborhood $N(v)$ of a vertex $v\in G$ corresponds to the 1-skeleton of the link $\lk [v]$ and $\overline{N}(v)$ corresponds to the 1-skeleton of $\cl \st [v]$.
\end{proposition}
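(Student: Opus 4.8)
The plan is to reduce the statement to a purely combinatorial description of which simplices of the flag complex $F(G)$ lie in $\cl \st [v]$ and in $\lk [v]$, and then simply read off the faces of dimension at most $1$, identifying a graph with its associated $1$-dimensional complex as at the start of this section.

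The key lemma I would prove first is: a simplex $\sigma$ of $F(G)$ lies in $\cl \st [v]$ if and only if $\sigma \cup \{v\}$ is a clique of $G$. Indeed, by the description of the star, $\st [v]$ is exactly the set of cliques of $G$ containing $v$, and $\cl \st [v]$ --- the smallest subcomplex containing it --- consists of all faces of such cliques. If $\sigma \subseteq \tau$ with $\tau$ a clique and $v \in \tau$, then $\sigma \cup \{v\} \subseteq \tau$ is a clique; conversely if $\sigma \cup \{v\}$ is a clique then $\sigma$ is a face of $\sigma \cup \{v\} \in \st [v]$. Since every simplex of $F(G)$ is a clique, a simplex of $\cl \st [v]$ containing $v$ automatically lies in $\st [v]$, so (using $\cl [v] = \{[v]\}$) the link $\lk [v] = (\cl \st [v]) \setminus (\st [v] \cup \cl [v])$ is precisely the set of $\sigma$ with $v \notin \sigma$ and $\sigma \cup \{v\}$ a clique, which matches the ``vertex set disjoint from $\{v\}$'' description of the link.

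With these descriptions, the claim follows by enumerating $0$- and $1$-faces. For $\cl \st [v]$: a vertex $[w]$ lies in it iff $\{v,w\}$ is a clique, i.e. $w=v$ or $(v,w)\in E$; an edge $[w_1,w_2]$ lies in it iff $\{v,w_1,w_2\}$ is a clique, which --- checking separately the cases $v\in\{w_1,w_2\}$ and $v\notin\{w_1,w_2\}$ --- holds iff $w_1,w_2$ both lie in $\{v\}\cup\{w:(v,w)\in E\}$ and $(w_1,w_2)\in E$. This is exactly the vertex set and edge set of the induced subgraph $\overline{N}(v)$. For $\lk [v]$ the computation is identical but with $v$ excluded: the vertices are the neighbors of $v$ and the edges are the edges of $G$ joining two neighbors of $v$, which is precisely $N(v)$. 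I do not anticipate a real obstacle; the only care needed is the bookkeeping around the link's definition (checking the two given descriptions agree for the $0$-simplex $[v]$) and the small case analysis on whether $v$ is an endpoint of an edge --- both routine once the clique characterization above is in place.
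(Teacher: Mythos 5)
Your proof is correct and reaches the same combinatorial conclusion the paper does, but you organize it around a different key lemma, and your version is actually tighter in one spot the paper leaves implicit. The paper first shows $\lk [v]$ equals the frontier $\fr \st [v] = \cl \st [v] \setminus \st [v]$ via a chain of set identities, reads off that its vertices are exactly the graph neighbors of $v$, and then asserts the edges of $N(v)$ and $\lk [v]$ coincide ``because $\lk [v]$ is an abstract simplicial complex'' --- a step that in one direction (an edge of $N(v)$ lifting into $\cl \st [v]$) really needs the flag property, not just closure under faces. You instead prove the explicit characterization that $\sigma \in \cl \st [v]$ iff $\sigma \cup \{v\}$ is a clique of $G$, deduce the matching description of $\lk [v]$ directly from the definition of the link, and then enumerate $0$- and $1$-faces. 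This makes both inclusions on the edge level explicit and pins down exactly where the flag-complex structure is used, at the mild cost of not recording the topological identity $\lk [v] = \fr \st [v]$ that the paper's route produces as a byproduct. Both arguments are routine and correct; yours is the more self-contained of the two.
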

\begin{proof}
Observe that the set 
\begin{equation*}
\{w \in V : (v,w) \in E \}
\end{equation*}
is the set of vertices in 
\begin{eqnarray*}
  \left(\cl \st [v]\right) \setminus \st [v] &=& \fr \st [v]\\
  &=& (\cl \st [v]) \cap \cl(X \backslash \st [v])\\
  &=& (\cl \st [v]) \cap (X \backslash \st [v])\\
  &=& (\cl \st [v]) \backslash (\st [v])\\
  &=& (\cl \st [v]) \backslash (\st [v] \cup \cl [v])\\
  &=& \lk [v]. 
\end{eqnarray*}

(We relied on the fact that $\st [v]$ is open (third line) and that $[v]$ is closed (fifth line).)
Every edge in $N(v)$ is in $\lk v$ because $\lk [v]$ is an abstract simplicial complex, and for the same reason every edge in $\lk [v]$ is also in $N(v)$.  Including the vertex $v$ to form $\overline{N}(v)$ yields the vertex set of $\cl \st [v]$, so this completes the proof.  
\end{proof}

\begin{definition}
  The number of connected components in the open neighborhood of $v$ will be important in our results. We will denote that by $|\pi_0(N(v))|$ or simply $\pi_0$ when there is no confusion on the vertex choice. We will denote the degree of a vertex $v$ as
\begin{equation*}
  \deg v = |\{w \in V : (v,w) \in E \}|.
\end{equation*}
\end{definition}

\begin{proposition}
  \label{prop:graph_degree}
If $X$ is the $1$-dimensional abstract simplicial complex corresponding to a graph $G(V,E)$ then 
\begin{equation*}
  \beta_1(\st [v] ) = \dim H_1(X,X \backslash \st [v]) = \deg v - 1
\end{equation*}
for each vertex $v$.
\end{proposition}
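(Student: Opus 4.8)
The plan is to use the excision result (Proposition~\ref{prop:local_excision}) to reduce the computation to a small, explicit complex, and then read off the homology directly from the boundary map. First I would note that since $X$ is $1$-dimensional, $\cl \st [v]$ is the subcomplex consisting of the vertex $[v]$, all $\deg v$ edges incident to $[v]$, and the $\deg v$ vertices at the far ends of those edges. Write $w_1, \dotsc, w_d$ for the neighboring vertices, where $d = \deg v$, and $e_i = [v, w_i]$ for the incident edges. The frontier $\fr \st [v]$ is $\cl \st [v] \setminus \st [v]$, which consists precisely of the vertices $[w_1], \dotsc, [w_d]$ (this is essentially the link computation performed in the preceding proposition). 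By Proposition~\ref{prop:local_excision}, $H_1(X, X \setminus \st [v]) \cong H_1(\cl \st [v], \fr \st [v])$.

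Next I would write down the relative chain complex of the pair $(\cl \st [v], \fr \st [v])$. The relative $1$-chains have basis $\{e_1, \dotsc, e_d\}$ (all edges are in $\st [v]$, hence not in the frontier), so $C_1 \cong \mathbb{R}^d$. The relative $0$-chains have basis only $\{[v]\}$, since the other vertices $[w_i]$ lie in the frontier and are quotiented out; so $C_0 \cong \mathbb{R}$. There are no higher chains. The relative boundary map $\partial_1 \colon C_1 \to C_0$ sends $e_i = [v, w_i]$ to $[w_i] - [v]$, but $[w_i] \in \fr \st [v]$, so by the definition of the relative boundary map in Definition~\ref{def:chain_space} the term $[w_i]$ is dropped; thus $\partial_1(e_i) = -[v]$ for every $i$ (with the appropriate sign depending on the vertex ordering within $e_i$, but in any case a nonzero scalar multiple of $[v]$). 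Therefore $\partial_1$ is the map $\mathbb{R}^d \to \mathbb{R}$ whose matrix is a row of nonzero entries; its rank is $1$ (as long as $d \geq 1$, which holds for any vertex of a graph with at least one edge — and if $\deg v = 0$ the statement should be interpreted with the convention that $\beta_1 = -1$ is vacuous, or one simply restricts to $\deg v \geq 1$), so $\ker \partial_1$ has dimension $d - 1$. Since $C_2 = 0$, we get $H_1 = \ker \partial_1$, giving $\beta_1(\st [v]) = d - 1 = \deg v - 1$.

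I would close by remarking that the vertex-ordering signs in Equation~\eqref{eq:boundary} do not affect the argument: each column of the matrix of $\partial_1$ is a single nonzero real number, so the matrix still has rank exactly $1$ and nullity exactly $d-1$. The main (very mild) obstacle is purely bookkeeping: making sure the relative chain spaces are identified correctly — in particular that all the boundary vertices $[w_i]$ get killed in $C_0(\cl \st [v], \fr \st [v])$ while all the edges survive in $C_1$ — and handling the trivial edge case $\deg v = 0$ (and the also-easy $\deg v = 1$, where $H_1 = 0$) so the formula reads correctly. Everything else is a one-line linear algebra computation once excision has been invoked.
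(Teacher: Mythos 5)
Your proof is correct, but it takes a genuinely different route from the paper's. Both arguments begin the same way, invoking Proposition~\ref{prop:local_excision} to reduce the computation to the pair $(\cl \st [v], \fr \st [v])$ and observing that the frontier consists precisely of the $\deg v$ neighboring vertices. From there you diverge: you simply write down the relative chain complex $0 \to \mathbb{R}^{\deg v} \xrightarrow{\ \partial_1\ } \mathbb{R} \to 0$ directly (with $C_1$ spanned by the incident edges and $C_0$ spanned by $[v]$ alone, the $[w_i]$ having been killed by the relative quotient), note that each $\partial_1(e_i) = \pm[v]$ so $\partial_1$ has rank $1$, and read off $\dim H_1 = \deg v - 1$ by rank--nullity. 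The paper instead runs the long exact sequence of the pair $(\cl \st [v], \fr \st [v])$, reduces the claim to showing $H_1(\cl \st [v]) = 0$, and then proves that by writing out the absolute (non-relative) chain complex of $\cl \st [v]$ and checking $\partial_1$ is injective there. Your approach is more economical --- it avoids the long exact sequence entirely and does one small linear-algebra computation rather than two --- whereas the paper's version is structured to anticipate the long-exact-sequence argument reused later in Theorem~\ref{thm:bound}. Your explicit handling of the $\deg v = 0$ and $\deg v = 1$ edge cases is also a welcome extra; the paper tacitly assumes $\deg v \geq 1$ (for $\deg v = 0$ both arguments give $H_1 = 0$, not $-1$, so the stated formula genuinely fails there and your caveat is warranted).
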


We will later take $1 + \beta_1(\st \sigma)$ to be the \emph{generalized degree} of a simplex $\sigma$ in an abstract simplicial complex.  It is also useful to compare Proposition \ref{prop:graph_degree} with Theorem \ref{thm:bound} in Section \ref{sec:general} which makes a more general and more global statement, but is less tight.

\begin{proof} 

  By excision (Proposition \ref{prop:local_excision} in Section \ref{sec:local_homology_def}), we have that
  \begin{eqnarray*}
    \beta_1(\st [v] ) &=& \dim H_1(X,X \backslash \st [v])\\
    &=& \dim H_1(\cl \st [v], \fr \st [v]).
  \end{eqnarray*}
  Because $X$ is a  $1$-dimensional abstract simplicial complex, $\fr \st [v] = \cl (\st [v]) \cap \cl (X \backslash \st [v])$
  contains no edges, and therefore contains only vertices.  (Suppose $\epsilon$ were to be an edge in $\fr \st [v]$.  Since the closure of any subset $A \subseteq X$ differs from $A$ only in its vertices, then $\epsilon \in \st [v] \cap (X \backslash \st [v]) = \emptyset$, which is a contradiction.)  The number of vertices in $\fr \st [v]$ is precisely the degree of $v$.  

  Therefore, the long exact sequence for the pair $(\cl \st [v], \fr \st [v])$ is
  \begin{equation*}
      0 \to H_1(\cl \st [v]) \to H_1(\cl \st [v],\fr \st [v]) \to \mathbb{R}^{\deg v} \xrightarrow{i} \mathbb{R} \to H_0(\cl \st [v],\fr \st [v]) \to 0 
  \end{equation*}
  
  The map labeled $i$ above represents the map induced by the inclusion
  \begin{equation*}
    \fr\st [v] \hookrightarrow \cl\st [v],
  \end{equation*}
  and so is surjective.
  
  We claim that $H_1(\cl \st [v])=0$, because this means that $H_1(\cl \st [v],\fr \st [v])$ injects into $\mathbb{R}^{\deg v}$.  Because of the surjectivity of $i$, this means that the dimension of $H_1(\cl \st [v],\fr \st [v])$ must be $(\deg v) - 1$ as the theorem states.
  
  To prove the claim on $H_1(\cl \st [v])$, consider the chain complex:
  \begin{equation*}
    \xymatrix{
      0 \ar[r] & \mathbb{R}^{\deg v} \ar[r]^{\partial_1} & \mathbb{R}^{1+\deg v} \ar[r] & 0 \\
    }
  \end{equation*}
  in which the boundary map is given by
  \begin{equation*}
    \partial_1 = \begin{pmatrix}
      1  &  1 & \dotsb & 1\\
      -1 &  0 & \dotsb & 0\\
      0  & -1 & \dotsb & 0\\
         &    & \vdots &  \\
      0  &  0 & \dotsb & -1\\   
      \end{pmatrix}.
  \end{equation*}
  in which the first row corresponds to $[v]$ and each column corresponds to an edge incident to $[v]$.  Clearly $\partial_1$ is injective, so $H_1(\cl \st [v])=0$.
\end{proof}

\subsection{Basic graph definitions}

\begin{definition}
A graph $G=(V,E)$ is \emph{planar} if it can be embedded in the plane such that
edges only intersect at their endpoints. In other words, it can be drawn so
that no edges cross. This drawing is called a \emph{planar embedding}. An
example is shown in Figure \ref{fig:planar}.  Each simply-connected region in the plane bounded by the embedding of a cycle in the graph is called a \emph{face}.  Faces are \emph{bounded} (or \emph{unbounded}) if they are compact (or not compact, respectively) in the plane.  The set of faces is denoted $A$.
\end{definition}
One important property of planar graphs, for the purposes of this article,
is that they are \emph{locally outerplanar}. In other words, for any $v \in V$,
$N(v)$ is an outerplanar graph. This will be used to prove our bounds on the
local clustering coefficient.
\begin{definition}
A graph is \emph{outerplanar} if it has a planar embedding such that all
vertices belong to the unbounded face of the drawing. An example is shown in
Figure \ref{fig:outerplanar}.
\end{definition}
\begin{figure}[ht]
\begin{center}
  \begin{subfigure}[b]{0.45\textwidth}
  \begin{center}
    \includegraphics{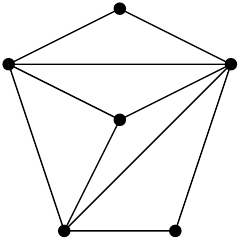}
    \caption{A planar embedding of a graph.}\label{fig:planar}
  \end{center}
  \end{subfigure}
  \begin{subfigure}[b]{0.45\textwidth}
  \begin{center}
    \includegraphics{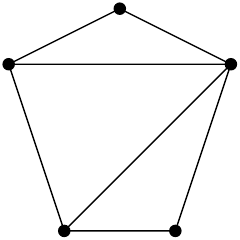}
    \caption{An outerplanar graph.}\label{fig:outerplanar}
  \end{center}
  \end{subfigure}
  \caption{}
\end{center}
\end{figure}

A minimal outerplanar graph on $n$ vertices is a tree, and thus has
$n-1$ edges. A maximal outerplanar graph is a triangulation and has $2n-3$
edges (this can be derived by using the Euler characteristic formula for planar graphs, $|V|-|E|+|A|=2$, and observing that each edge is contained in two faces while each finite face is bounded by 3 edges and the infinite face is bounded by $|V|$ edges). The graph in Figure \ref{fig:planar} cannot be outerplanar (that is,
it cannot be drawn with all vertices on the unbounded face) because it has 6
vertices and 10 edges, and an outerplanar graph on 6 vertices has at most
$2 \cdot 6 - 3 = 9$ edges.

\subsection{Local homology at a vertex}
\label{sec:graph_results}

The local homology of a vertex in a
graph considers the structure of the neighborhood of a single vertex in
relation to the rest of the graph. In this section we work in the context of the flag
complex $F(G)$ rather than the graph $G$ itself. Note that this is different than the perspective taken in Proposition \ref{prop:graph_degree} which treats the graph as a simplicial complex on its own rather than through the lens of its flag complex.

Local homology can be defined with respect to any open subset of an abstract simplicial complex
$X$ using Definition \ref{def:local_homology}. In the notation of this definition let $X = F(G)$ and $U = \st\cl(L)$ for some $L \subseteq X$. We are specifically interested
in the case where $L = \{v\}$ for some $v \in V$. In this case, $\st\cl(L)$ will
consist of $\{v\}$ itself, all edges incident to $v$, and an $(i+1)$-simplex
for every $K_{i}$ in $N(v)$.

\begin{figure}
\begin{center}
\includegraphics[width=4in]{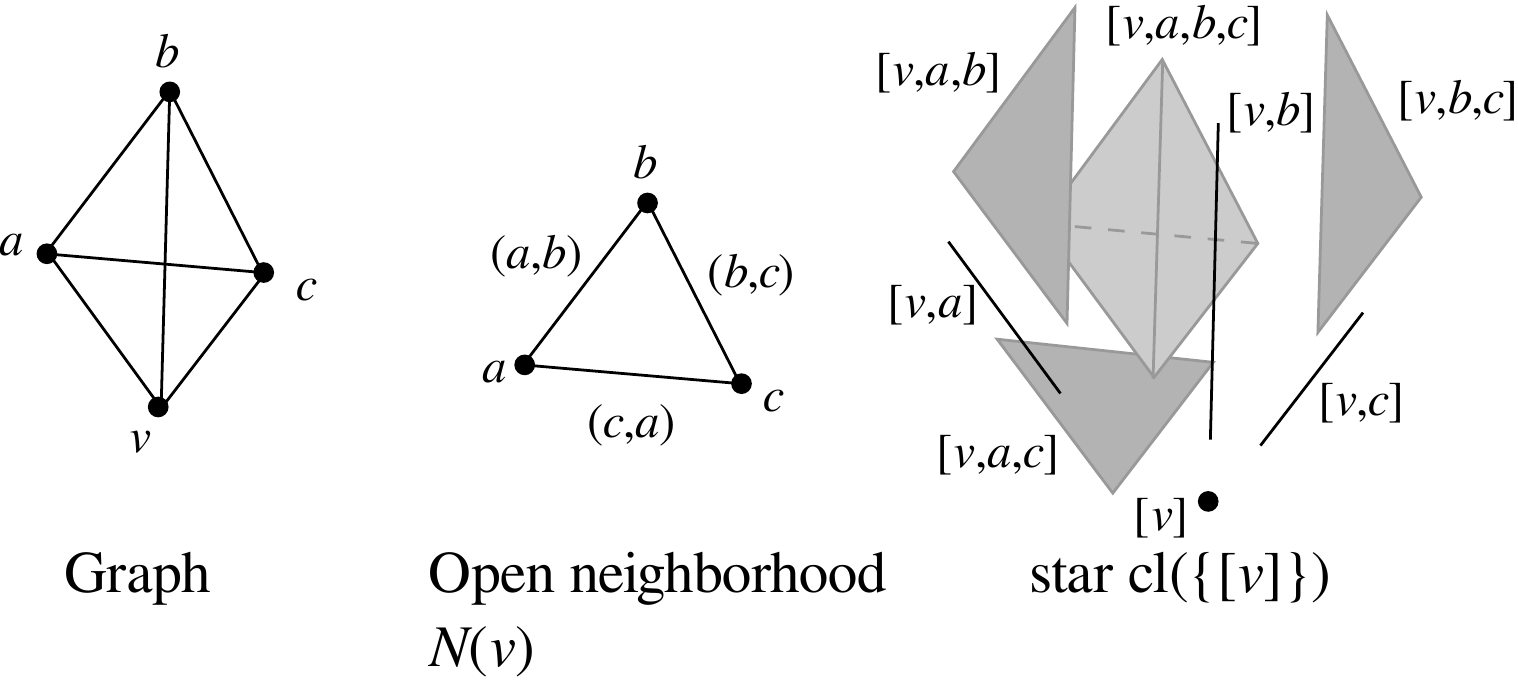}
\caption{A graph containing vertices $\{a,b,c,v\}$ (left), the open neighborhood $N(v)$ (center), and the star of the closure of $v$ in the flag complex of the graph (right)}
\label{fig:starcl_graph}
\end{center}
\end{figure}

\begin{example}
  \label{eg:starcl_graph}
  Consider the graph shown in Figure \ref{fig:starcl_graph}.  In this graph, $N(v)$ contains edges $\{(a, b), (b,c), (c,a)\}$ forming a $K_3$.  Then $\st\cl(\{[v]\})$ contains vertex $[v]$, edges $\{[v, a], [v,b], [v,c]\}$, 3-simplices $\{[v,a,b], [v,a,c], [v,b,c]\}$, and 4-simplex $[v, a, b, c]$.
\end{example}

For any $0$-simplex $[v]$ in $F(G)$, the first local Betti number $\beta_1(\st [v])$ is computed using the rank-nullity theorem on the linear maps between the chain spaces $C_k(F(G),F(G) \backslash (\st [v]))$. Specifically:
\begin{equation*}
  \beta_1(\st[v]) = \dim(C_1(F(G), F(G)\setminus \st
  \cl [v])) - \dim(\text{im}(\partial_1)) - \dim(\text{im}(\partial_{2})).
\end{equation*}

Now with all of the definitions out of the way we can
present our two bound results for planar graphs. First we bound the local
clustering coefficient of a vertex $v$ in terms of its degree, $\deg v$, and the
number of connected components in its open neighborhood, $\pi_0$.
\begin{lemma}\label{lem:CC_bounds}
For a planar graph $G=(V,E)$ and vertex $v \in V$ with degree $\deg v$ and $\pi_0$
connected components in $N(v)$ we have
\begin{equation*}
  \frac{2(\deg v-\pi_0)}{\deg v(\deg v-1)} \leq CC(v) \leq \frac{6(\deg v-\pi_0)}{\deg v(\deg v-1)}.
\end{equation*}
\end{lemma}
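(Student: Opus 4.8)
The plan is to relate the local clustering coefficient $CC(v)$ to the edge count of the open neighborhood $N(v)$ and then use the outerplanarity of $N(v)$ (which holds because $G$ is planar, hence locally outerplanar) to bound that edge count from above and below. Recall that $CC(v) = \frac{2 e(N(v))}{\deg v (\deg v - 1)}$, where $e(N(v))$ denotes the number of edges of the induced subgraph $N(v)$, since $\deg v (\deg v - 1)/2$ counts the potential edges among the neighbors of $v$. So the entire statement reduces to the purely combinatorial bound
\begin{equation*}
  \deg v - \pi_0 \;\leq\; e(N(v)) \;\leq\; 3(\deg v - \pi_0),
\end{equation*}
where $\pi_0 = |\pi_0(N(v))|$ is the number of connected components of $N(v)$ and $\deg v$ is its number of vertices.

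First I would establish the lower bound. Each of the $\pi_0$ connected components of $N(v)$ is itself a connected graph, so a component on $m$ vertices has at least $m-1$ edges (a spanning tree). Summing over the $\pi_0$ components, whose vertex counts total $\deg v$, gives $e(N(v)) \geq \deg v - \pi_0$. This yields the left inequality immediately after dividing by $\deg v (\deg v - 1)/2$.

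Next I would establish the upper bound, which is where the planarity hypothesis does its work. Since $G$ is planar, $N(v)$ is outerplanar; outerplanarity is hereditary, so each connected component of $N(v)$ is outerplanar. As noted in the text (derived from Euler's formula), a maximal outerplanar graph on $m \geq 2$ vertices has $2m - 3$ edges, so any outerplanar graph on $m$ vertices has at most $2m - 3$ edges when $m \geq 2$, and $0$ edges when $m = 1$. I would note that $2m - 3 \leq 3(m-1)$ for all $m \geq 1$ (indeed $3(m-1) - (2m-3) = m \geq 0$), and that the bound $e \leq 3(m-1)$ also holds trivially for $m = 1$. Summing $e(\text{component}) \leq 3(m_i - 1)$ over the $\pi_0$ components with $\sum m_i = \deg v$ gives $e(N(v)) \leq 3(\deg v - \pi_0)$, and dividing by $\deg v(\deg v - 1)/2$ produces the right inequality.

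The main obstacle is getting the upper bound constant right and handling the degenerate cases cleanly: the factor $3$ (rather than, say, $2$) comes precisely from passing from the sharp per-component outerplanar bound $2m_i - 3$ to the uniform bound $3(m_i - 1)$ so that the per-component bounds sum nicely against $\deg v - \pi_0$; a component that is a single vertex or a single edge must be checked separately to confirm the inequality $e \leq 3(m-1)$ still holds (it does, with room to spare). One should also confirm the formula $CC(v) = 2e(N(v))/(\deg v(\deg v - 1))$ is being used with the convention in force, and that $\deg v \geq 2$ so the denominator is nonzero (if $\deg v \leq 1$ the clustering coefficient is conventionally $0$ or undefined and the statement is vacuous).
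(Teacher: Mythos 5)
Your proof is correct and follows essentially the same route as the paper: decompose $N(v)$ into connected components, bound each component's edge count below by a spanning tree and above using outerplanarity (inherited from planarity of $G$), then sum. The only small difference is in the upper-bound bookkeeping---you replace the sharp per-component outerplanar bound $2n_i-3$ with the uniform bound $3(n_i-1)$ so singleton components are absorbed automatically, while the paper sums $2n_i-3$, explicitly adds a correction $s_v$ for singletons, and then bounds $s_v\le\deg v$; both reach $|E(N(v))|\le 3(\deg v-\pi_0)$ and the same final inequality.
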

\begin{proof}
Let $N(v) = \{N(v)_i\}_{i=1}^{\pi_0}$ be the partition of $N(v)$ into its connected
components. Let $n_i := |V(N(v)_i)|$ be the number of vertices in each
connected component so that $\deg v = \sum_{i=1}^{\pi_0} n_i$. Additionally, let $m_i
:= |E(N(v)_i)|$ be the number of edges in each connected component, with
$|E(N(v))| = \sum_{i=1}^{\pi_0} m_i$. Since $G$ is planar, $N(v)$ must be
outerplanar. Moreover, each $N(v)_i$ must be outerplanar. Therefore we can
use the bounds on the number of edges in an outerplanar graph to bound the
clustering coefficient. We will start with the lower bound.
\begin{align*}
CC(v) &= \frac{|E(N(v))|}{\binom{\deg v}{2}}\\
      &= \frac{2\cdot \sum_{i=1}^{\pi_0} m_i}{\deg v(\deg v-1)}\\
      &\geq \frac{2\cdot \sum_{i=1}^{\pi_0} (n_i-1)}{\deg v(\deg v-1)}\\
      &= \frac{2(\deg v-{\pi_0})}{\deg v(\deg v-1)}.
\end{align*}
Now, for the upper bound, notice that if $n_i > 1$ the bound of $m_i \leq
2n_i-3$ makes sense. But if $n_i=1$ then $m_i=0$ and not $2n_i-3=-1$.
Therefore, when bounding $|E(N(V))|$ from above we must take this into
account.
\begin{align*}
CC(v) &= \frac{|E(N(v))|}{\binom{\deg v}{2}}\\
      &= \frac{2\cdot \sum_{i=1}^{\pi_0} m_i}{\deg v(\deg v-1)}\\
      & \leq \frac{2 \left[\sum_{i=1}^{\pi_0} (2 n_i-3)+(\text{number of singleton }N(v)_i)\right]}{\deg v(\deg v-1)}
\end{align*}
We must add this ``number of singleton $N(v)_i$'' because for every singleton
$N(v)_i$ we have a -1 contribution from $2n_i-3$. This is counteracted by
adding +1 for each of these singleton components. Letting this number of
singleton components equal $s_v$ we may finishing the upper bound.
\begin{align*}
CC(v) &\leq \frac{2 ( 2\deg v-3\pi_0+s_v )}{\deg v(\deg v-1)}\\
      &\leq \frac{2 (2\deg v-3\pi_0+\deg v)}{\deg v(\deg v-1)}\\
      &= \frac{6 (\deg v-\pi_0)}{\deg v(\deg v-1)}
\end{align*}
\end{proof}

Next, we will use these bounds along with Proposition \ref{prop:graph_degree} to
establish a functional relationship between $\beta_1(\st [v])$ and $CC(v)$.
\begin{theorem}\label{thm:LH_CC_bounds}
Let $G=(V,E)$ be a planar graph, and $v \in V$. Then we may bound
$\beta_1(\st [v])$ with functions of $CC(v)$ and $\deg v$
\begin{equation*}
  \deg v-1-\frac{\deg v(\deg v-1)CC(v)}{2} \leq \beta_1(\st [v]) \leq \deg v-1-\frac{\deg v(\deg v-1)CC(v)}{6}.
\end{equation*}
\end{theorem}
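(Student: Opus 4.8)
The plan is to isolate one exact computation and then feed it into Lemma~\ref{lem:CC_bounds}. The exact fact I would establish first is that, in the flag complex,
\[
  \beta_1(\st [v]) = |\pi_0(N(v))| - 1 .
\]
Since a vertex is closed, $\st\cl[v] = \st[v]$, so by excision (Proposition~\ref{prop:local_excision}) we may work directly with the relative chain complex $C_\bullet(F(G), F(G)\setminus \st[v])$. By Definition~\ref{def:chain_space} its $k$-chains have as a basis the $k$-simplices of $F(G)$ that contain $v$: the single vertex $[v]$ in degree $0$, the $\deg v$ edges $[v,w]$ in degree $1$, and one triangle $[v,a,b]$ for each edge $(a,b)$ of $N(v)$ in degree $2$. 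In this relative complex every simplex not containing $v$ is zero, so $\partial_1[v,w] = [w] - [v] \equiv -[v]$ and $\partial_2[v,a,b] = [a,b] - [v,b] + [v,a] \equiv [v,a] - [v,b]$. Hence $\dim(\text{im}\,\partial_1) = 1$, while under the identification $[v,w] \leftrightarrow w$ the map $\partial_2$ is literally the boundary operator of the graph $N(v)$, so $\dim(\text{im}\,\partial_2) = |V(N(v))| - |\pi_0(N(v))| = \deg v - |\pi_0(N(v))|$. Substituting into the rank--nullity formula stated just before the theorem gives $\beta_1(\st[v]) = \deg v - 1 - (\deg v - |\pi_0(N(v))|) = |\pi_0(N(v))| - 1$. (Alternatively, and more conceptually, $\cl\st[v]$ is the cone with apex $[v]$ over $\lk[v] = F(N(v))$, hence contractible, so the long exact sequence of the pair identifies $H_1(\cl\st[v],\lk[v])$ with $\widetilde H_0(F(N(v)))$, which has dimension $|\pi_0(N(v))| - 1$.)

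With the identity $\beta_1(\st[v]) = \pi_0 - 1$ in hand (writing $\pi_0 = |\pi_0(N(v))|$), the theorem is a one-step rearrangement of Lemma~\ref{lem:CC_bounds}. Assuming $\deg v \geq 2$ so that $\deg v(\deg v - 1) > 0$ and $CC(v)$ is defined, the lower bound $CC(v) \geq \frac{2(\deg v - \pi_0)}{\deg v(\deg v - 1)}$ rearranges to $\deg v - \pi_0 \leq \frac{1}{2}\deg v(\deg v-1)CC(v)$, i.e.\ $\pi_0 - 1 \geq \deg v - 1 - \frac{1}{2}\deg v(\deg v-1)CC(v)$; symmetrically the upper bound gives $\pi_0 - 1 \leq \deg v - 1 - \frac{1}{6}\deg v(\deg v-1)CC(v)$. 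Substituting $\beta_1(\st[v]) = \pi_0 - 1$ yields exactly the claimed inequalities.

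I expect the only substantive step to be the identification of $\partial_2$ on the relative complex with the boundary operator of $N(v)$, and hence of $\dim(\text{im}\,\partial_2)$ with $\deg v - |\pi_0(N(v))|$; everything downstream is bookkeeping plus arithmetic. I would also flag the degenerate cases in the write-up: for $\deg v \leq 1$ the denominator $\deg v(\deg v-1)$ vanishes and $CC(v)$ is conventionally undefined (or set to $0$), so the statement is implicitly read for $\deg v \geq 2$, where $N(v)$ is nonempty, $\pi_0 \geq 1$, and the lower bound $\beta_1(\st[v]) \geq 0$ is automatically consistent.
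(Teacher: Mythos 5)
Your proof is correct, and the overall outline matches the paper: establish the identity $\beta_1(\st[v]) = |\pi_0(N(v))| - 1$ and then feed it into Lemma~\ref{lem:CC_bounds} by straightforward rearrangement (identical algebra in both). The genuine difference is how the identity is obtained. The paper simply cites Theorem~\ref{thm:bound} for it, but that theorem as stated counts connected components of $X\backslash\st\sigma$, not of $N(v)$, and attains equality only when $H_1(X)=0$ — neither of which literally holds with $X=F(G)$. Your argument closes that gap: by excision you reduce to the pair $(\cl\st[v],\lk[v])$, and then either the direct rank computation on the relative chain complex, or the observation that $\cl\st[v]$ is the cone on $\lk[v]=F(N(v))$ (hence contractible, so the long exact sequence identifies $H_1(\cl\st[v],\lk[v])$ with $\widetilde H_0(N(v))$), yields $\beta_1(\st[v]) = \pi_0 - 1$ with no hypotheses on $H_1(F(G))$. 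In effect your second, conceptual argument is Theorem~\ref{thm:bound} applied to the local complex $\cl\st[v]$ rather than to $F(G)$, which is what the paper tacitly intends; your write-up makes the excision step explicit. Your remark about the degenerate case $\deg v\le 1$ is also well taken and is not addressed in the paper.
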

\begin{proof}
For the proof of this Theorem we will use shorthand and denote $H_v :=
\beta_1(\st [v])$. From Theorem \ref{thm:bound} (proved in Section \ref{sec:general}) we know that the number
of connected components in $N(V)$ can be written in terms of the dimension 1
relative homology at vertex $v$
\begin{equation*}
  \pi_0 = H_v+1.
\end{equation*}
Then, the upper bound for $CC(v)$ in terms of $\deg v$ and $\pi_0$ from Lemma
\ref{lem:CC_bounds} can be turned into an upper bound on $H_v$ in terms of
$CC(v)$.
\begin{align*}
  CC(v) &\leq \frac{6(\deg v-(H_v+1))}{\deg v(\deg v-1)}\\
  \frac{\deg v(\deg v-1)CC(v)}{6} &\leq \deg v-H_v-1\\
  H_v &\leq \deg v-1-\frac{\deg v(\deg v-1)CC(v)}{6}
\end{align*}
Similarly the lower bound from \ref{lem:CC_bounds} can be turned into a lower
bound for $H_v$.
\begin{align*}
  CC(v) &\geq \frac{2(\deg v-(H_v+1))}{\deg v(\deg v-1)}\\
  \frac{\deg v(\deg v-1)CC(v)}{2} &\geq \deg v-H_v-1\\
  H_v &\geq \deg v-1-\frac{\deg v(\deg v-1)CC(v)}{2}
\end{align*}
\end{proof}

\section{Local homology of general complexes}
\label{sec:general}

Generalizing Proposition \ref{prop:graph_degree} from Section \ref{sec:graphs} to all abstract simplicial complexes provides a generalization of the degree of a vertex to all simplices.  This quantity also has a convenient interpretation in terms of connected components.

\begin{theorem}
\label{thm:bound}\cite{RobinsonGlobalSIP2014}
Suppose that $X$ is an abstract simplicial complex, and that $\sigma$ is a face of $X$.  If $X$ is connected and $\st \sigma$ is a proper subset of $X$, then $\beta_1(\st \sigma) + 1$ is an upper bound on the number of connected components of $X \backslash \st \sigma$.  When $H_1(X)$ is trivial, that upper bound is attained.
\end{theorem}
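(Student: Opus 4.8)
The plan is to read everything off the long exact sequence of the pair $(X, X \backslash \st\sigma)$. First I would note that $\st\sigma$ is open, so $X \backslash \st\sigma$ is closed and hence, by the earlier proposition, a genuine subcomplex of $X$; thus $(X, X\backslash\st\sigma)$ is a topological pair to which the usual long exact sequence of simplicial homology applies. The relevant segment is
\begin{equation*}
H_1(X) \longrightarrow H_1(X, X\backslash\st\sigma) \xrightarrow{\ \partial\ } H_0(X\backslash\st\sigma) \xrightarrow{\ i_*\ } H_0(X),
\end{equation*}
with $i_*$ induced by the inclusion $X\backslash\st\sigma \hookrightarrow X$.

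Next I would analyse the two outer maps. Since $X$ is connected, $H_0(X)\cong\mathbb{R}$; since $\st\sigma$ is a proper subset of $X$, the subcomplex $X\backslash\st\sigma$ is nonempty and contains some vertex $w$, and $i_*[w]$ generates $H_0(X)$, so $i_*$ is surjective. Hence $\dim\ker i_* = \dim H_0(X\backslash\st\sigma) - 1 = |\pi_0(X\backslash\st\sigma)| - 1$. By exactness $\ker i_* = \image\partial$, and $\image\partial$ is a homomorphic image of $H_1(X, X\backslash\st\sigma)$, so
\begin{equation*}
|\pi_0(X\backslash\st\sigma)| - 1 \;=\; \dim\image\partial \;\le\; \dim H_1(X, X\backslash\st\sigma) \;=\; \beta_1(\st\sigma),
\end{equation*}
which rearranges to the asserted bound $|\pi_0(X\backslash\st\sigma)| \le \beta_1(\st\sigma)+1$. (This incidentally forces $X\backslash\st\sigma$ to have only finitely many components, since $\beta_1(\st\sigma)$ is finite by local finiteness and Proposition \ref{prop:local_excision}.) For the equality clause, if $H_1(X)=0$ then exactness at $H_1(X,X\backslash\st\sigma)$ makes $\partial$ injective, so $\dim\image\partial = \beta_1(\st\sigma)$ and the displayed inequality becomes an equality.

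I do not expect a real obstacle: the long exact sequence of a pair, the identification of $\dim H_0$ with the number of connected components, and the fact that the complement of a star is a subcomplex are all already in hand. The points needing a little care are at the bottom of the sequence --- surjectivity of $i_*$ genuinely uses connectedness of $X$ (the bound is false for disconnected $X$), and properness of $\st\sigma$ is used only to guarantee that $X\backslash\st\sigma$ is nonempty so that $i_*$ has a chance of being onto. Equivalently, one can run the whole argument in reduced homology, where $\tilde H_0(X)=0$ collapses the sequence to $H_1(X)\to H_1(X,X\backslash\st\sigma)\to\tilde H_0(X\backslash\st\sigma)\to 0$ and the bookkeeping is even cleaner.
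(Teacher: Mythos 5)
Your proposal is correct and follows essentially the same route as the paper: the long exact sequence of the pair $(X, X\backslash\st\sigma)$, with connectedness forcing $H_0(X)\cong\mathbb{R}$, properness of $\st\sigma$ giving surjectivity of the map to $H_0(X)$, and exactness at $H_1(X,X\backslash\st\sigma)$ turning $H_1(X)=0$ into injectivity of the connecting map. In fact your version is slightly tighter than the paper's, which needlessly (and incorrectly) asserts along the way that $H_1(X)=0$ implies $H_1(X\backslash\st\sigma)=0$; you get the injectivity of $\partial$ directly from exactness without that detour.
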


The proof is a short computation using the long exact sequence for the pair $(X,X\,\backslash\,\st \sigma)$.

\begin{proof}
  Let $Y=X\,\backslash\, \st \sigma$.  Consider the long exact sequence associated to the pair $(X,Y)$, which is
\begin{equation*}
\xymatrix{
\dotsb \ar[r]& H_1(Y) \ar[r] & H_1(X) \ar[r] & H_1(X,Y) \ar[llld] \\
 H_0(Y) \ar[r]& H_0(X) \ar[r]& H_0(X,Y) \ar[r] & 0.
}
\end{equation*}
$H_k(X,Y) \cong \tilde{H}_k(X/Y)$ follows via \cite[Prop. 2.22]{Hatcher_2002}, where $\tilde{H}_k$ is reduced homology.  The standard interpretation of reduced homology means that $H_0(X,Y)=0$ and $H_0(X)\cong \mathbb{Z}$ because $X$ is connected.  Thus the long exact sequence reduces to
\begin{equation*}
\dotsb \to H_1(Y) \to H_1(X) \to H_1(X,Y) \overset{f}{\to} H_0(Y) \overset{g}{\to} \mathbb{Z} \to 0.
\end{equation*}
The number of connected components of $Y$ is $\rank H_0(Y)$.  Because of the last term in the long exact sequence, this is at least 1.  If $H_1(X)=0$ then $H_1(Y)=0$ also, so the kernel of the homomorphism $g$ is precisely the image of the monomorphism $f$.  Hence $\rank H_1(X,Y) + 1 = \rank H_0(Y)$ as claimed.

On the other hand, if $H_1(X)$ is not trivial, then $H_1(Y)$ may or may not be trivial, depending on exactly where $a$ happens to fall.  By exactness,
\begin{eqnarray*}
  \text{dim}(\ker g) &=& \rank H_0(Y) - 1 \\
  &=& \text{dim}(\image f) \\
  &=& \rank H_1(X,Y) - \text{dim}(\ker f)\\
\end{eqnarray*}

where the rank-nullity theorem for finitely generated abelian groups applies in the last equality. The kernel of $f$ may be as large as $\rank H_1(X)$, but it may be smaller.  Thus, we can claim only that
\begin{equation*}
\rank H_0(Y) \le \rank H_1(X,Y) + 1.
\end{equation*}
\end{proof}

\begin{definition}
  We call the number $1 + \beta_1(\st \sigma)$ the \emph{generalized degree} of a simplex $\sigma$ in an abstract simplicial complex.
\end{definition}

\begin{figure}
\begin{center}
\includegraphics[width=5in]{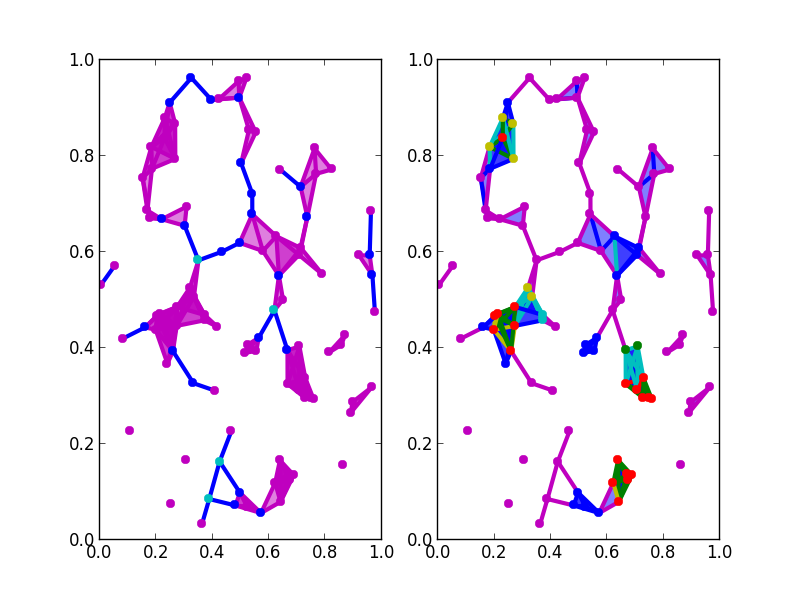}
\caption{$\beta_1$ (left) and $\beta_2$ (right) of a random simplicial complex.  Magenta: $\beta_k = 0$, Blue: $\beta_k = 1$, Cyan: $\beta_k = 2$, Green: $\beta_k = 3$, Yellow: $\beta_k = 4$, Red: $\beta_k = 5$.}
\label{fig:thick_graph}
\end{center}
\end{figure}

Figure \ref{fig:thick_graph} shows an example of a random simplicial complex that has been colored by $\beta_1$ (left) and $\beta_2$ (right), which provides some insight into why $1+\beta_1$ is a generalized degree.  Due to Proposition \ref{prop:graph_degree}, it is clear that $1+\beta_1$ reduces to the degree of a vertex in a graph.  However, for simplicial complexes Theorem \ref{thm:bound} indicates that the generalized degree has a clear topological meaning: it is the number of local connected components that remain after removing that simplex.  

\subsection{Stratification detection} 
\label{sec:stratification_detection}
Roughly speaking, a space is a manifold whenever it is locally Euclidean at each point.  Although local homeomorphisms can be difficult to construct, local homology can identify some non-manifold spaces. Recall that \emph{singular homology} is defined for \emph{all} topological spaces by studying classes of continuous maps from the standard $k$-simplices, while simplicial homology  is defined for abstract simplicial complexes (Definition \ref{def:relative_simplicial_homology}).

\begin{proposition}\cite[Thm 2.27]{Hatcher_2002}
\label{prop:triangulation}
If $Z$ is a triangulation of a topological space $X$ for which a subcomplex $W \subseteq Z$ is a triangulation of a subspace $Y\subseteq X$, then $H_k(X,Y) \cong H_k(Z,W)$ where the left side is relative singular homology and the right side is relative simplicial homology.
\end{proposition}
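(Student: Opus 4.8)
The plan is to reduce the statement to the standard comparison between simplicial and singular homology of a simplicial pair, and then to reproduce the skeletal induction that underlies that comparison. Since $Z$ triangulates $X$ and $W$ triangulates $Y$, there are homeomorphisms of geometric realizations $|Z| \approx X$ and $|W|\approx Y$ carrying the pair $(|Z|,|W|)$ to $(X,Y)$, and relative singular homology is a topological invariant; so it suffices to prove $H_k(Z,W) \cong H_k^{\mathrm{sing}}(|Z|,|W|)$, where the left side is the relative simplicial homology of Definition~\ref{def:relative_simplicial_homology}. First I would write down the canonical comparison chain map: each oriented $n$-simplex $[v_0,\dots,v_n]$ of $Z$ has a characteristic map $\Delta^n \to |Z|$, which is a singular $n$-simplex, and extending $\mathbb{R}$-linearly gives a chain map $\eta_\bullet: C_\bullet(Z) \to C^{\mathrm{sing}}_\bullet(|Z|)$ (the only thing to check is $\partial\eta = \eta\partial$, which amounts to the fact that the singular boundary of a characteristic map is the alternating sum of the characteristic maps of its faces, exactly matching Equation~\eqref{eq:boundary}). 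Since faces of $Z$ not in $W$ have characteristic maps whose images are not contained in $|W|$, $\eta$ descends to $\eta_\bullet: C_\bullet(Z,W) \to C^{\mathrm{sing}}_\bullet(|Z|,|W|)$, and this map is natural with respect to inclusions of subcomplexes. The goal is to show the induced map on homology is an isomorphism.

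Next I would reduce the relative claim to the absolute one. Both $C_\bullet(Z,W)$ and $C^{\mathrm{sing}}_\bullet(|Z|,|W|)$ sit in short exact sequences of chain complexes (on the simplicial side because $C_\bullet(W)$ is literally a subcomplex of $C_\bullet(Z)$; on the singular side by definition of relative chains), and $\eta$ carries the first short exact sequence to the second. Passing to long exact sequences gives a commuting ladder, and by the five lemma, $\eta$ is an isomorphism on the relative groups $H_k(Z,W)$ provided it is already known to be an isomorphism on the absolute groups $H_k(W) \to H_k^{\mathrm{sing}}(|W|)$ and $H_k(Z)\to H_k^{\mathrm{sing}}(|Z|)$. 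So it remains to handle the absolute case for an arbitrary (locally finite) simplicial complex $K$.

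For the absolute case I would induct on skeleta. When $K$ is finite-dimensional, filter it by skeleta $K^0 \subseteq K^1 \subseteq \cdots$ and again apply the five lemma to the ladder of long exact sequences of the pairs $(K^n,K^{n-1})$, reducing to the claim that $\eta$ is an isomorphism $H_k(K^n,K^{n-1}) \to H_k^{\mathrm{sing}}(|K^n|,|K^{n-1}|)$. On the simplicial side this group is visibly free, with one generator per $n$-simplex of $K$ in degree $k=n$ and zero otherwise. On the singular side, $(|K^n|,|K^{n-1}|)$ is a good pair and $|K^n|/|K^{n-1}|$ is a wedge of $n$-spheres, one per $n$-simplex; using singular excision (Proposition~\ref{prop:usual_excision}) and homotopy invariance of singular homology, its relative singular homology is likewise free on the fundamental classes of those spheres, and $\eta$ sends the preferred simplicial generator to the preferred sphere generator. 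For a general locally finite $K$ one passes to the colimit over finite subcomplexes: both theories commute with direct limits, since every chain has finite support and every relation among cycles is witnessed inside a finite subcomplex, and $\eta$ is compatible with these colimits by its naturality.

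The main obstacle is the bookkeeping in the base computation: one must verify that under $\eta$ the simplicial generator of $H_n(K^n,K^{n-1})$ attached to a simplex $\tau$ maps exactly onto the generator of the corresponding sphere summand of the singular relative homology — i.e., that the neighborhood-deformation-retraction exhibiting $(|K^n|,|K^{n-1}|)$ as a good pair is compatible with the characteristic map of $\tau$ — and that the relative-cycle conventions for $W$ versus $|W|$ genuinely make the two short exact sequences correspond. Once this matching of generators is pinned down, the two applications of the five lemma and the colimit argument are purely formal. As this is Hatcher's Theorem~2.27 specialized to simplicial complexes, I would in the end cite it, offering the argument above as its skeleton.
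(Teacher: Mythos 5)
Your proof is correct and reconstructs exactly the argument behind Hatcher's Theorem~2.27, which the paper simply cites without giving its own proof: the comparison chain map $\eta$, reduction from the relative to the absolute case via the five lemma, skeletal induction with the wedge-of-spheres base computation, and a colimit over finite subcomplexes for the general case. This is the standard route and matches the cited reference, so no discrepancy to report.
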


 Because of this proposition, we shall generally ignore the distinction between a topological space (usually a stratified manifold) and its triangulations. Moreover as a consequence of \cite{mccord1966} we associate to any triangulation an abstract simplicial complex which has the same homology groups as the triangulation. For this reason we will freely reference the homology groups of the manifold and its triangulation with the corresponding abstract simplicial complex. 

\begin{definition} \cite[pg. 198]{Munkres_1984}
An abstract simplicial complex $X$ is called a \emph{homology $n$-manifold} if 
\begin{equation*}
\dim H_k(X,X \backslash \st \sigma) = \begin{cases}
1 & \text{if }k=n\\
0 & \text{otherwise}\\
\end{cases}
\end{equation*}
for each simplex $\sigma$.  Any simplex for which the above equation does \emph{not} hold is said to be a \emph{ramification simplex}.  
\end{definition}

The presence of ramification simplices implies that a simplicial complex cannot be the triangulation of a manifold.  All ramification simplices necessarily occur along lower-dimensional strata of the triangulation of a stratified manifold, but not all strata contain ramification simplices in a triangulated stratified manifold.

Since Proposition \ref{prop:local_excision} in Section \ref{sec:local_homology_def} provides a characterization of the behavior of local homology at various simplices, this means that ramification simplices are easily detectable.

\begin{example}
  Consider again the random simplicial complex shown in Figure \ref{fig:thick_graph}.  In the portions of the complex where it appears ``graph edge-like'', for instance each edge that is not a face of any other simplex, $\beta_1=1$ and $\beta_2 = 0$.  On the other hand, places where the complex appears to be ``thickened vertices'' have nonzero $\beta_2$, indicating that they are ramification simplices.
\end{example}

Using this definition and Proposition \ref{prop:triangulation} in Section \ref{sec:relative_homology}, we obtain the following useful characterization of the local homology of triangulations.

\begin{proposition}
\label{prop:homology_manifold}
If $X$ is the triangulation of a topological $n$-manifold, then $X$ is a homology $n$-manifold.
\end{proposition}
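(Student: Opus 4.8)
The plan is to reduce the claim to a well-known local computation for Euclidean space by exploiting the two bridging results already available: Proposition~\ref{prop:triangulation}, which lets us pass freely between the relative \emph{simplicial} homology of $(X, X\backslash\st\sigma)$ and the relative \emph{singular} homology of the underlying space, and Proposition~\ref{prop:local_excision}, which replaces $H_k(X, X\backslash\st\sigma)$ by $H_k(\cl\st\sigma, \fr\st\sigma)$ whenever that is computationally convenient. So fix a simplex $\sigma$ of $X$ and let $M$ denote the topological $n$-manifold triangulated by $X$. I would pick a point $p$ in the (topological) interior of the geometric realization of $\sigma$, i.e.\ in the open star of $\sigma$. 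The first step is to observe that the open star $\st\sigma$, realized in $M$, is an open neighborhood of $p$, and that by Munkres' local-constancy result (\cite[Lem.\ 35.2]{Munkres_1984}, cited earlier) the relative homology $H_k(X, X\backslash\st\sigma)$ is isomorphic to the local homology $H_k(M, M\backslash\{p\})$ at the point $p$.

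The second step is the standard identification of the local homology of a topological manifold: for any point $p$ of an $n$-manifold $M$, one has $H_k(M, M\backslash\{p\}) \cong H_k(\mathbb{R}^n, \mathbb{R}^n\backslash\{0\})$, which is $\mathbb{R}$ when $k=n$ and $0$ otherwise. This follows by excising the complement of a Euclidean chart around $p$ and then using that $\mathbb{R}^n\backslash\{0\}$ deformation retracts onto $S^{n-1}$, together with the long exact sequence of the pair (or equivalently $H_k(\mathbb{R}^n,\mathbb{R}^n\backslash\{0\})\cong \tilde H_{k-1}(S^{n-1})$). This is the content of the cited \cite[Thm.\ 2.27 / Example in Sec.\ 2.1]{Hatcher_2002} material, so I would invoke it rather than reprove it. Combining with the first step gives $\dim H_k(X, X\backslash\st\sigma) = 1$ if $k=n$ and $0$ otherwise, for every simplex $\sigma$, which is exactly the defining condition for $X$ to be a homology $n$-manifold.

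The main obstacle — and the only place requiring genuine care — is the passage from "$H_k(X, X\backslash\st\sigma)$ of a simplicial pair" to "$H_k(M, M\backslash\{p\})$ of a point in the manifold". One must check that the subspace of $M$ realized by $X\backslash\st\sigma$ is precisely the complement of the open star of $\sigma$, that this complement is a subcomplex (hence triangulated, so Proposition~\ref{prop:triangulation} applies), and that $M\backslash\st\sigma$ is a deformation retract of $M\backslash\{p\}$ so that the inclusion induces an isomorphism on homology — equivalently, that $\st\sigma$ deformation retracts onto $p$ rel nothing, which is the local-constancy statement. Since the excerpt already grants us \cite[Lem.\ 35.2]{Munkres_1984} ("local homology is locally constant within the interior of a simplex") and Proposition~\ref{prop:triangulation}, these technical points can be cited rather than rebuilt. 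I would state the two-line argument as: by Proposition~\ref{prop:triangulation} and the local constancy of local homology, $H_k(X, X\backslash\st\sigma) \cong H_k(M, M\backslash\{p\})$ for any $p$ in the open star of $\sigma$; since $M$ is an $n$-manifold this group is $\mathbb{R}$ for $k=n$ and $0$ otherwise; hence $X$ satisfies the definition of a homology $n$-manifold.
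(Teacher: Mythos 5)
Your argument is correct and follows essentially the same route the paper gestures at: the paper gives no explicit proof but simply invokes Proposition~\ref{prop:triangulation} to transfer the question to singular local homology of the underlying manifold, where the answer is standard. You add the genuinely necessary intermediate step — identifying $H_k(X, X\backslash\st\sigma)$ with $H_k(M, M\backslash\{p\})$ for $p$ in the open star via Munkres' Lemma~35.2 — which the paper's one-line justification silently relies on, so your write-up is a faithful and slightly more careful elaboration of the intended proof.
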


Manifold boundaries can also be detected by their distinctive local homology.

\begin{proposition}
  \label{prop:manifold_boundary}
  If $X$ is the triangulation of a topological $n$-manifold with boundary and $\sigma$ is a simplex on that manifold boundary, then
  \begin{equation*}
    \dim H_k(X,X \backslash \st \sigma) = 0
  \end{equation*}
  for all $k$.
\end{proposition}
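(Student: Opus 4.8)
The plan is to reduce the computation to a purely local one via excision, and then exploit the known local structure of a manifold with boundary near a boundary point. First I would invoke Proposition~\ref{prop:local_excision} to rewrite $H_k(X, X\backslash \st\sigma)$ as $H_k(\cl\st\sigma, \fr\st\sigma)$, so that everything takes place inside the closed star of $\sigma$, which is a finite complex. By Proposition~\ref{prop:triangulation} I may pass freely between this simplicial pair and the corresponding pair of topological spaces: $\cl\st\sigma$ triangulates a closed neighborhood of a point $p$ in the interior of $\sigma$, and $\fr\st\sigma$ triangulates the frontier of that neighborhood. Since $p$ lies on the manifold boundary $\partial X$, a neighborhood of $p$ in $X$ is homeomorphic to a half-space neighborhood, i.e. to $\mathbb{R}^{n-1}\times[0,\infty)$ with $p$ at the origin; concretely I would take the model pair $(H, \partial_{\textrm{top}} H)$ where $H$ is a closed half-ball $\{x\in\mathbb{R}^n : \|x\|\le 1,\ x_n\ge 0\}$ and $\partial_{\textrm{top}} H$ is the spherical part of its boundary, $\{x : \|x\|=1,\ x_n\ge 0\}$.

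The heart of the argument is then the claim that $H_k(H, \partial_{\textrm{top}} H) = 0$ for all $k$. This follows because $H$ is contractible and $\partial_{\textrm{top}} H$ is a contractible subspace (it is a closed half-sphere, hence a disk), so from the long exact sequence of the pair
\begin{equation*}
  \cdots \to \tilde H_k(\partial_{\textrm{top}} H) \to \tilde H_k(H) \to H_k(H,\partial_{\textrm{top}} H) \to \tilde H_{k-1}(\partial_{\textrm{top}} H) \to \cdots
\end{equation*}
both outer terms vanish in every degree, forcing the middle term to vanish as well. Equivalently one observes that the inclusion $\partial_{\textrm{top}} H \hookrightarrow H$ is a homotopy equivalence (both deformation retract to $p$), and Proposition~\ref{prop:homotopy_invariant} together with the long exact sequence gives the result. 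Chaining the excision isomorphism, the triangulation isomorphism, and this vanishing yields $\dim H_k(X, X\backslash\st\sigma) = 0$ for all $k$, as desired.

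The step I expect to require the most care is the identification of the topological pair $(\cl\st\sigma, \fr\st\sigma)$ with the model pair $(H, \partial_{\textrm{top}} H)$ up to homotopy — or more precisely, the verification that $\fr\st\sigma$ really corresponds to the ``spherical'' part of the boundary of the neighborhood and not to a piece that includes part of $\partial X$ itself. One must check that points of $\st\sigma$ that happen to lie on the manifold boundary are still \emph{interior} to $\st\sigma$ in the Alexandrov topology of $X$ (they are, since $\st\sigma$ is open in $X$), so that $\fr\st\sigma$ picks up only the ``far side'' of the neighborhood and inherits the half-disk homotopy type rather than a full sphere. Once this local picture is pinned down, the homological computation is routine. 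An alternative that sidesteps some of this bookkeeping is to note that for a manifold with boundary, the pair $(X, X\backslash\st\sigma)$ with $\sigma$ on the boundary deformation retracts (after excision) onto a pair of the form (cone, base of cone), whose relative homology vanishes for elementary reasons; I would use whichever formulation integrates most cleanly with the conventions already established for Propositions~\ref{prop:homology_manifold} and~\ref{prop:local_excision}.
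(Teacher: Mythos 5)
Your argument and the paper's are essentially the same: both reduce to the local model of a half-space near a boundary point and observe that the relevant pair has trivial relative homology. The paper passes (somewhat tersely) to singular homology and computes $H_k(\mathbb{H}^n, \mathbb{H}^n\setminus\{0\}) \cong \tilde H_k(\mathbb{H}^n/(\mathbb{H}^n\setminus B_\epsilon(0))) = 0$ via a deformation retraction and contractibility of the quotient, whereas you route explicitly through Proposition~\ref{prop:local_excision} to the simplicial pair $(\cl\st\sigma, \fr\st\sigma)$, identify its realization with the half-ball/hemisphere pair, and run the long exact sequence; these are interchangeable ways to organize the same computation, and your extra care in noting that $\fr\st\sigma$ captures only the spherical part of the neighborhood boundary (since $\st\sigma$ is open in $X$) makes the reduction step more transparent than the paper's ``without loss of generality.''
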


\begin{proof}
  Using Propositions \ref{prop:triangulation} and \ref{prop:homotopy_invariant} in Section \ref{sec:relative_homology}, without loss of generality, we consider the case of the half space
  \begin{equation*}
    \mathbb{H}^n = \{(x_1, \dotsc, x_n) \in \mathbb{R}^n : x_1 \ge 0\}
  \end{equation*}
  and compute local homology at the origin using singular homology
  \begin{equation*}
    H_k(\mathbb{H}^n, \mathbb{H}^n \backslash \{0\}).
  \end{equation*}
  If $B_\epsilon(0)$ is the open ball of radius $\epsilon> 0$ about the origin, then $\mathbb{H}^n \backslash\{0\}$ deformation retracts to $\mathbb{H}^n \backslash B_\epsilon(0)$, so
  \begin{eqnarray*}
    H_k(\mathbb{H}^n, \mathbb{H}^n \backslash \{0\}) &\cong& \widetilde{H}_k(\mathbb{H}^n/ ( \mathbb{H}^n \backslash B_\epsilon(0)))\\
    &\cong& 0
  \end{eqnarray*}
  since the quotient is contractible.
\end{proof}

\begin{figure}
\begin{center}
\includegraphics[width=3in]{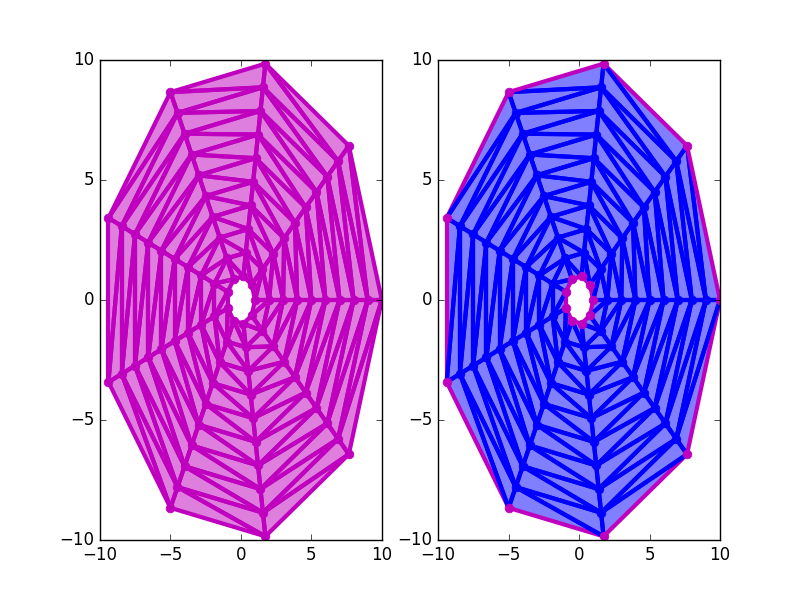}
\caption{Local Betti 1 (left) and local Betti 2 (right) of an annulus. Magenta: $\beta_k = 0$, Blue: $\beta_k = 1$.}
\label{fig:annulus}
\end{center}
\end{figure}

\begin{example}
As an example of the local homology of a manifold with boundary, Figure \ref{fig:annulus} shows $\beta_1$ and $\beta_2$ computed over all simplices in the triangulation of an annulus.  Because of Proposition \ref{prop:homology_manifold}, the local 1-homology is completely trivial over the entire space.  Since the space is locally homeomorphic to $\mathbb{R}^2$ away from its boundary, the local 2-homology has dimension 1 there.  Along the boundary, the local 2-homology is trivial in accordance with Proposition \ref{prop:manifold_boundary}.
\end{example}

\begin{figure}
\begin{center}
\includegraphics[width=4in]{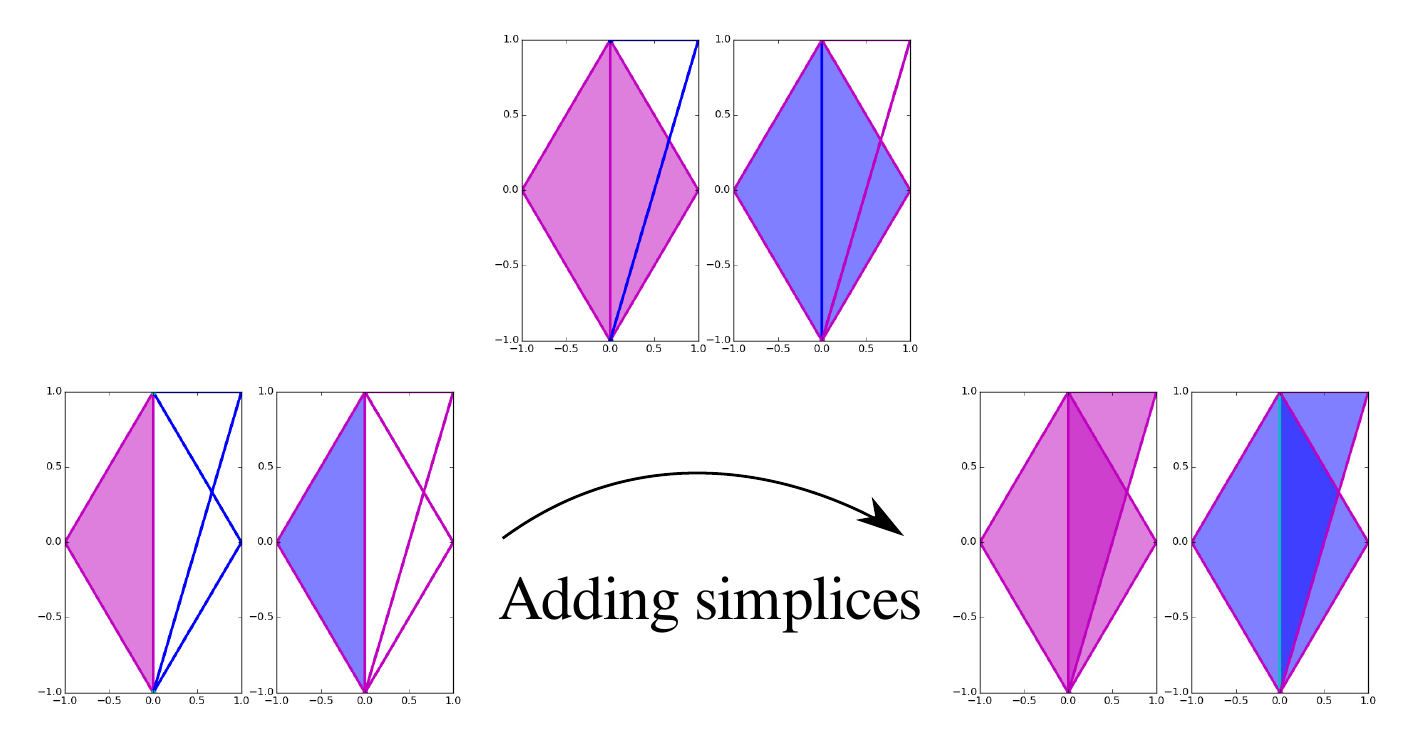}
\caption{Local Betti 1 (left frames) and local Betti 2 (right frames) of three different simplicial complexes as additional 2-simplices are added.  Magenta: $\beta_k = 0$, Blue: $\beta_k = 1$, Cyan: $\beta_k = 2$.}
\label{fig:stratman}
\end{center}
\end{figure}

\begin{example}
Consider the sequence of stratified manifolds shown in Figure \ref{fig:stratman}.  Notice that the local 1-homology is nontrivial in the parts of the complex at left and center in Figure \ref{fig:stratman} that appear ``graph-like'', namely the two loops.  However, the edges in the boundary of the filled 2-simplex are identified not as having nontrivial 1-homology, which indicates that they are part of a higher-dimensional structure.  At the other extreme in the complex at right in Figure \ref{fig:stratman}, shows that the local 2-homology along the common edge among the three 2-simplices has dimension 2.  This indicates that a ramification is present there.
\end{example}

\begin{figure}
\begin{center}
\includegraphics[width=4in]{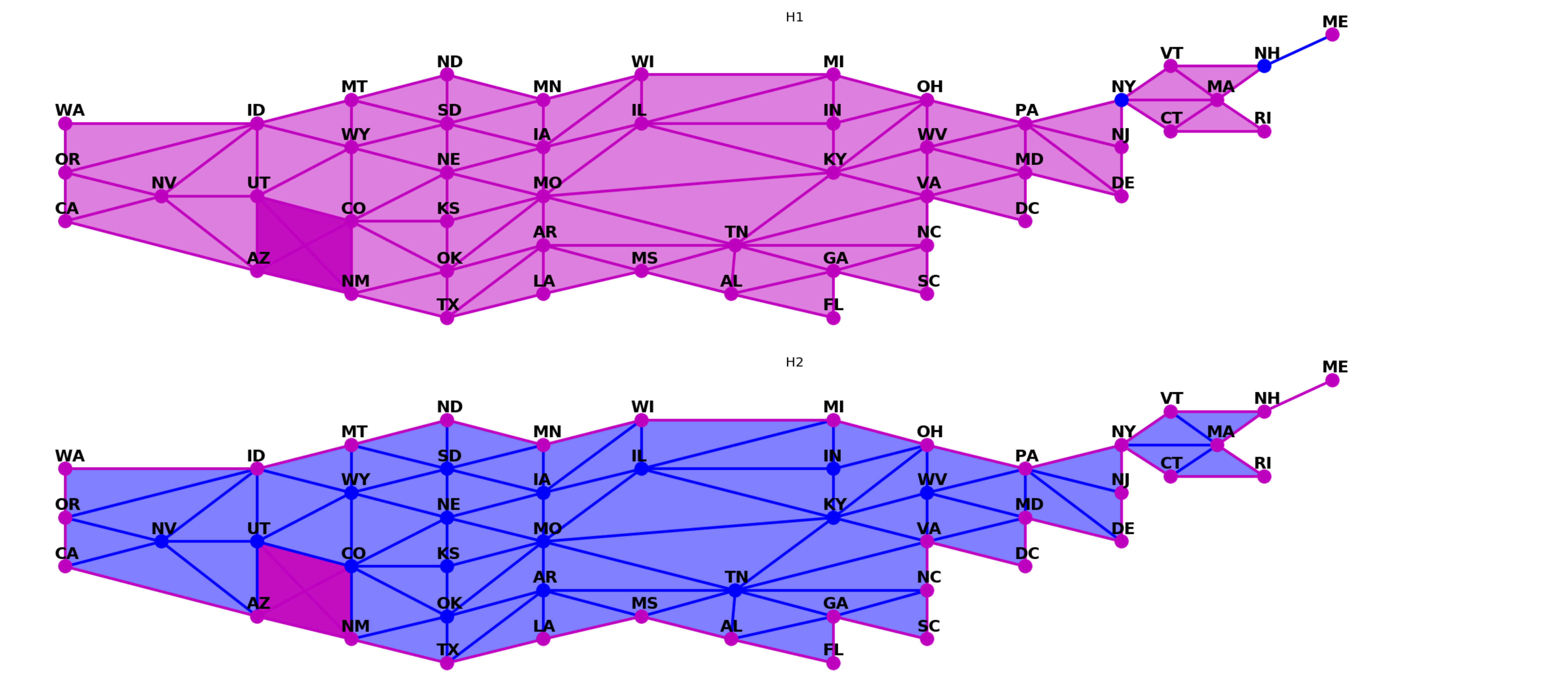}
\caption{Local Betti 1 (top) and local Betti 2 (bottom) of a map of the conterminous United States}
\label{fig:usa_map}
\end{center}
\end{figure}

\begin{example} (from \cite{Joslyn_2016})
Once the behavior of local homology on small examples is understood, it can be deployed as an analytic technique on larger complexes.  For instance, consider the abstract simplicial complex shown in Figure \ref{fig:usa_map}.  In this abstract simplicial complex, each US state corresponds to a vertex, each pair of states with a common boundary is connected with an edge, each triple of states sharing a boundary corresponds to a 2-simplex, etc.  Notice that although Maryland, Virginia, and the District of Columbia have two distinct three-way connections (the American Legion Memorial bridge and the Woodrow Wilson bridge), only one 2-simplex is present in the complex.  The presence of a common border point of Utah, Colorado, Arizona, and New Mexico is immediately and visually apparent as a change in stratification.  Additionally, the connectivity of New York and New Hampshire is easily identified as anomalous, and Maine sits on the single edge that is not included in any other simplex.
\end{example}

\subsection{Neighborhood filtration}
\label{sec:neighborhood}
For each set of faces $Y \subseteq X$ in an abstract simplicial complex, define the $0$-neighborhood\footnote{Be aware of the difference between $N(v)$ in Section \ref{sec:graphs} and $N_m([v])$ for a vertex $v$.} of $Y$ as $N_0(Y) = \st Y$ and for each $m > 0$, the $m$-neighborhood as $N_m(Y) = \st \cl N_{m-1}(Y)$.  There is a pair map $(X,N_{m-1}(Y)) \to (X,N_m(Y))$ between consecutive neighborhoods and therefore a sequence of induced maps on local homology
\begin{equation*}
  \xymatrix{
    H_k(X,N_0(Y)) \ar[r]& \dotsb H_k(X,N_{m-1}(Y)) \ar[r] & H_k(X,N_m(Y)) \ar[r]& \dotsb \\
    }
\end{equation*}
which can be thought of as a persistence module.  Observe that if $Y\subseteq Z$, then $N_m(Y) \subseteq N_m(Z)$ for all $m \ge 0$.  This means that the induced maps fit together into a commutative ladder
\begin{equation*}
  \xymatrix{
    H_k(X,N_0(Y)) \ar[r]\ar[d]& \dotsb H_k(X,N_{m-1}(Y)) \ar[r]\ar[d] & H_k(X,N_m(Y)) \ar[r]\ar[d]& \dotsb \\
    H_k(X,N_0(Z)) \ar[r]& \dotsb H_k(X,N_{m-1}(Z)) \ar[r] & H_k(X,N_m(Z)) \ar[r]& \dotsb \\
    }
\end{equation*}
This commutative ladder defines a homomorphism between persistence modules.  This means that associated to the complex $X$ is a sheaf of modules, the \emph{persistent local homology sheaf}\footnote{Beware! Persistent homology is itself a \emph{cosheaf} \cite{Curry_2015}, so we are using the adjective \emph{local} to avoid confusion.} whose stalks are persistence modules over the stars of each simplex and whose restriction maps are given by commutative ladders as above.  (Compare this construction with \cite{Bendich_2007}, which arrives at the same sheaf for Vietoris-Rips or \v{C}ech complexes.  Since they start with point cloud data, they have an additional parameter that controls the discretization.)

The situation of neighborhood filtrations is rather special, and is not functorial.  In particular, functoriality means that given a simplicial map $f : X \to X'$, we would have a sheaf morphism\footnote{Caution! Sheaf morphisms along a simplicial map ``go'' in the opposite direction from the simplicial map!  They are called \emph{$f$-cohomomorphisms} by \cite{Bredon} for this reason.} from the persistent local homology sheaf over $X'$ to the persistent local homology sheaf over $X$.  Each component map of such a morphism would have to be induced by a pair map like 
\begin{equation*}
(X',X'\setminus \st f(\sigma)) \to (X,X \setminus \st \sigma)
\end{equation*}
for each $\sigma \in X$.  But this kind of map will not be well-defined if $f$ is not bijective.  Dually, a pair map like
\begin{equation*}
(X,X\setminus \st \sigma) \to (X',X' \setminus \st f(\sigma))
\end{equation*}
also won't generally exist, but a coarsened local version does.

First, notice that a simplicial map $f:X \to X'$ descends to pair map $(X,X \backslash f^{-1}(f(\sigma))) \to (X',X'\backslash f(\sigma))$ for each $\sigma\in X$.  

Letting $Y_\sigma = f^{-1}(f(\sigma))$, we obtain
\begin{equation*}
f^{-1}( \st f(\sigma)  ) \supseteq \st ( f^{-1}(f(\sigma)) ) = \st Y_\sigma,
\end{equation*}
because $f$ is continuous.

\begin{example}
\label{eg:functorfail}
We note that the reverse inclusion does not hold in general topological spaces. If $X=X'=\{a,b\}$ but $X$ is given the discrete topology and $X'$ is given the trivial topology, then the identity map $i:X\to X'$ is continuous.  But $\st Y_a = Y_a = \{a\}$, so there does not exist a pair map $(X,X\setminus \st Y_a) \to (X',X' \setminus \st i(a)) = (X',\emptyset)$.
\end{example}

\begin{proposition}
  \label{prop:functorish}
If $f:X \to X'$ is a simplicial map, then $f$ descends to a pair map
\begin{equation*}
(X,X\setminus \st Y_\sigma) \to (X',X'\setminus \st f(\sigma)),
\end{equation*}
where $Y_\sigma = f^{-1}(f(\sigma))$ for each $\sigma\in X$.
\end{proposition}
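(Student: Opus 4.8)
The plan is to unwind what a pair map requires and check the one containment that remains. By Proposition \ref{prop:relative_functor}, a pair map $(A,B)\to(C,D)$ is simply a continuous map $g\colon A\to C$ with $g(B)\subseteq D$, the restriction to the subspaces then being automatically continuous for the subspace topology. Since $f$ is a simplicial map it is continuous (simplicial maps between abstract simplicial complexes are continuous, as established above), and both $X\setminus\st Y_\sigma$ and $X'\setminus\st f(\sigma)$ are closed, hence subcomplexes; so the whole content of the proposition is the set-theoretic inclusion $f\bigl(X\setminus\st Y_\sigma\bigr)\subseteq X'\setminus\st f(\sigma)$, equivalently
\[
f^{-1}\bigl(\st f(\sigma)\bigr)\subseteq\st Y_\sigma .
\]
This is precisely the reverse of the inclusion $\st Y_\sigma\subseteq f^{-1}(\st f(\sigma))$ recorded just before Example \ref{eg:functorfail}, and that example shows the reverse can genuinely fail for arbitrary continuous maps of topological spaces; so the real point is that the combinatorial rigidity of simplicial maps forces it here.

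To prove the inclusion I would pick a simplex $\tau\in X$ with $f(\tau)\in\st f(\sigma)$, which by the definition of the star means $f(\sigma)\subseteq f(\tau)$ as vertex subsets of $X'$. For each vertex $w\in f(\sigma)$ this inclusion provides a vertex $u_w\in\tau$ with $f(u_w)=w$; fix one such $u_w$ for each $w$ and set $\rho=\{\,u_w : w\in f(\sigma)\,\}$. Then $\rho\subseteq\tau$, so $\rho$ is a face of $X$, and $f(\rho)=\{\,f(u_w) : w\in f(\sigma)\,\}=f(\sigma)$, whence $\rho\in f^{-1}(f(\sigma))=Y_\sigma$.

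Finally, since $\st Y_\sigma$ consists of all simplices that contain some simplex of $Y_\sigma$ \cite[p.~371]{Munkres_1984}, and $\rho\in Y_\sigma$ with $\rho\subseteq\tau$, I conclude $\tau\in\st Y_\sigma$. This establishes $f^{-1}(\st f(\sigma))\subseteq\st Y_\sigma$, hence the containment of complements, and therefore $f$ restricts to the claimed pair map; combined with the opposite inclusion mentioned above, one even obtains the equality $f^{-1}(\st f(\sigma))=\st Y_\sigma$.

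I do not expect a genuine obstacle: the argument is a direct unwinding of definitions. The only things to watch are notational hygiene — remembering that $f(\sigma)$ and $f(\tau)$ denote simplices (images of vertex sets with duplicates collapsed), that $Y_\sigma$ is a set of simplices, and that $\st$ of a set of simplices is the union of the stars of its elements — and keeping in mind that the substantive content is exactly the inclusion that fails in the generality of Example \ref{eg:functorfail}, which is why functoriality of local homology along simplicial maps survives only in this \emph{coarsened} preimage form.
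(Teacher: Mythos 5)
Your proof is correct and follows essentially the same route as the paper: both reduce the claim to the set-theoretic inclusion $f^{-1}(\st f(\sigma))\subseteq \st Y_\sigma$ and establish it by taking $\tau$ with $f(\sigma)\subseteq f(\tau)$ and exhibiting a face of $\tau$ lying in $Y_\sigma$. Your version is in fact slightly more careful than the paper's: where the paper stops after producing a single vertex $v_0\in\tau$ with $f(v_0)$ a vertex of $f(\sigma)$ (so that $f([v_0])$ is merely contained in $f(\sigma)$), you assemble a full face $\rho\subseteq\tau$ with $f(\rho)=f(\sigma)$ exactly, which is what membership in $Y_\sigma=f^{-1}(f(\sigma))$, read as the topological preimage of the simplex $f(\sigma)$, literally requires.
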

\begin{proof}
  We need to show that $f^{-1}( \st f(\sigma)  ) \subseteq \st ( f^{-1}(f(\sigma)) )$, so suppose that $\tau \in f^{-1}( \st f(\sigma) )$. This means that $f(\tau) \in \st f(\sigma)$, which is equivalent to the statement that $f(\sigma)$ is a face of $f(\tau)$.  Suppose that $\tau = [v_0, \dotsc, v_n]$.  Since $f$ is simplicial, this means that
  \begin{equation*}
    f(\tau) = [f(v_0), \dotsc, f(v_n)]
  \end{equation*}
  (removing duplicate vertices as appropriate) and that the set of vertices for $f(\sigma)$ is a subset of $\{f(v_0), \dotsc, f(v_n)\}$.  Without loss of generality, suppose that $f(v_0)$ is a vertex in both $f(\sigma)$ and $f(\tau)$.  Thus $v_0 \in f^{-1}(f(\sigma))$ as a function on vertices, and yet $v_0$ is also a vertex of $\tau$ by assumption.  Thus $\tau \in \st [v_0] \subseteq \st f^{-1}(f(\sigma))$ as desired.
\end{proof}

Although not every open set is formed by unions of neighborhoods of $Y_\sigma$, this means that a simplicial map $f: X \to X'$ induces a map on local homology spaces
\begin{equation*}
  H_k(X,X\setminus N_m(Y_\sigma)) \to H_k(X',X'\setminus N_m(f(\sigma)))
\end{equation*}
for each $m \ge 0$.

\section{Computational considerations}\label{sec:computation}

 Our implementation of the computation of local homology is focused on computing relative homology of an abstract simplicial complex at an arbitrary simplex.  Our implementation was written using Python 2.7 and uses the {\tt numpy} library and is available as an open-source module of the {\tt pysheaf} repository on GitHub \cite{pysheaf}.  The use of {\tt numpy} simplifies the linear algebraic calculation, but does require that all calculations are performed using double-precision floating point arithmetic rather than $\mathbb{Z}$.  This means that torsion cannot be computed, but none of the theoretical results presented in this article rely upon torsion.  Steps in the description below that depend upon {\tt numpy} are noted.

The abstract simplicial complex $X$ is stored as a list of lists of vertices.  Each list of vertices represents a simplex, though all of its faces are included implicitly.   
The ordering of the list of vertices induces a total order on the vertices which in turn induces total orderings on the simplices. 
In particular, for runtime efficiency, the ordering of vertices within a simplex is required to be consistent with a fixed total ordering of vertices.  (This assumption is not enforced in our implementation, though incorrect results will be obtained if it is violated.)

 We made extensive use of Python \emph{dictionaries} since Python accesses dictionaries in constant time.  For comparison, we also wrote a version in which lists were used in place of dictionaries.  By expunging unnecessary list accesses, we were obtained substantial runtime reductions as shown in Table \ref{tab:profile_dictionaries}.  The results in the Table were obtained using an Intel Core i7-4900MQ running at 2.80 GHz with 32 GB DDR3 RAM on Windows 7.  Although using dictionaries does result in a performance penalty during the construction of neighborhoods of simplices, the overall runtime improvements are substantial.

\begin{table}
  \begin{center}
    \caption{Runtime reduction due to dictionaries in Stage 1}
    \begin{tabular}{|l|c|c|c|}
      \hline
      Example & List & Dict & Percent \\
      & runtime (s) & runtime (s) & decrease \\
      \hline
      USA (Figure \ref{fig:usa_map})&45.860&0.871&98.10\\
      Annulus (Figure \ref{fig:annulus})&223.488&4.106&98.16\\
      Random complex (Figure \ref{fig:thick_graph})&345.790&3.285&99.05\\
      Karate graph (Figure \ref{fig:graphs})&210.825&9.508&95.49\\
      \hline
    \end{tabular}
    \label{tab:profile_dictionaries}
    \end{center}
\end{table}

 For storage efficiency, it is only necessary to store maximal simplices, those that are not included in any higher-dimensional simplex.  We make the assumption that $X$ explicitly lists only simplices that are not included in any others.  This assumption has a runtime penalty, since faces of simplices will need to be computed as needed.  On the other hand, only faces of a certain dimension and of certain simplices will need to be computed at any given time.

\section{Statistical comparison with graph invariants}
\label{sec:statistics}

In this section, we compare several popular local invariants of graphs with the local homology of the flag complex on such graphs.  We first provide the definitions of the graph invariants that we consider in Section \ref{sec:ns_definitions}, noting that they are either \emph{vertex-based} or \emph{edge-based}.  Our comparison methodology is outlined in Section \ref{sec:ns_method}.  Briefly, comparison between a vertex-based (or edge-based) graph invariant and local homology at a vertex (or edge) is straightforward.  For other simplices in the flag complex the graph invariant must be extended in some fashion as we describe in that Section.  Section \ref{sec:ns_datasets} introduces the datasets we used for comparison.  Section \ref{sec:ns_comparison_measure} discusses our results.

\subsection{Graph invariants used in our comparison}
\label{sec:ns_definitions}
The following seven invariants are defined for an undirected graph $G = (V,E)$, where $V$ is the vertex set, and $E$ is the set of undirected edges:
\begin{enumerate}
\item \emph{Degree centrality} \cite{Diestel_2012},
\item \emph{Closeness centrality} \cite{Bavelas_1950},
\item \emph{Vertex and Edge betweenness centrality} \cite{Freeman_1977, Brandes_2001},
\item \emph{Random walk vertex betweenness centrality} \cite{Newman_2005},
\item \emph{Maximal clique count} \cite{Diestel_2012}, and
\item \emph{Clustering coefficient} \cite{Watts_1998}.
\end{enumerate}
Apart from the edge betweenness centrality which is defined for each edge in the graph, all the other invariants are defined on the vertices of the graph.

\subsection{Comparison methodology}
\label{sec:ns_method}

In contrast to graph-based invariants like betweenness centrality which measure how central a node is in the context of the whole graph, local homology ignores all but the local neighborhood and enumerates topological features of that neighborhood.  We restrict the comparisons to vertices and edges for which the graph invariants are well defined. For edges, we also consider the aggregation of the vertex-based graph invariants corresponding to the two vertices constituting the edge via averaging. Formal extension of the comparison methodology to higher-order faces beyond edges is being considered through appropriate contraction of the corresponding faces to a \textit{super vertex}.   

Consider a $(k-1)$-dimensional face $\sigma \in F(G)$ of the flag complex (Definition \ref{def:flag_complex}) constructed from the corresponding set of graph vertices $\{v_1,v_2,\dots,v_k\}$. Let $f(v)$ denote a particular vertex-based graph invariant for a given vertex $v$. 

We consider two ways of comparing the graph invariants with local homology are as follows:

\begin{enumerate}

\item If $\sigma=[v]$, then we may compare $f(v)$ with $\beta_k(\st v)$ directly.

\item If $\sigma=[v_1,v_2]$ and $[v_1,v_2] \in E$, where $E$ is the set of edges, we compare $\frac{\left(f(v_1)+f(v_2)\right)}{2}$ with $\beta_k(\st [v_1,v_2])$ 

\end{enumerate}

We present scatter plots for comparing the local homologies with various graph invariants for three different graphs (described in Section \ref{sec:ns_datasets}). Three different neighborhoods $N_0$, $N_1$ and $N_2$ have been considered for the computation of the local homologies. For the edges, we considered edge-betweenness centrality and also the aggregation of the graph invariants corresponding to the two vertices constituting the edge via averaging.

The two invariants under consideration are correlated using Pearson's correlation coefficient ($\rho$).  Given two real-valued vectors of same length, $X$ and $Y$, Pearson's correlation coefficient $\rho_{X,Y}$ is given as follows.
\begin{equation*}
\rho_{X,Y} = \frac{cov(X,Y)}{\sigma_X\sigma_Y}
\end{equation*}
Here \textit{cov} refers to the co-variance and $\sigma$ refers to the standard deviation respectively.

For each of the graphs, we present results corresponding to a subset of the graph invariants and local Betti numbers for which good to excellent ($|\rho|$ = 0.6 to 0.9) correlation is observed.  Surprisingly we observed \textit{very little} correlation ($\rho$ was highly variable in magnitude and sign and was between 0 and 0.4) between $\beta_1$ for the edges for the $N_0$ and $N_1$ neighborhoods with the edge betweenness centrality which is directly calculated for each edge on the graph without any aggregation steps. Hence, for the edges, we only present the results corresponding to the aggregation of the vertex-based invariants via averaging. 

\subsection{Dataset description}
\label{sec:ns_datasets}
In this study we focus on three different graphs namely
\begin{enumerate}
\item The well-known Zachary Karate Club Network graph with 34 vertices and 78 edges \cite{Zachary_1977},
\item A synthetic Erd\H{o}s-R\'{e}nyi graph with 40 vertices and 146 edges \cite{Erdos_1959}, and 
\item A synthetic Barabasi-Albert preferential attachment graph with 40 vertices and 144 edges \cite{Barabasi_1999}.
\end{enumerate}

The synthetic graphs were drawn from two different families with highly dissimilar degree distributions. All of the graphs are connected. A visualization of the three graphs is shown in Figure \ref{fig:graphs}. The visualizations were created by using the {\tt Gephi} software package \cite{gephi}. 

\begin{figure}[!htb]
\includegraphics[width=\textwidth]{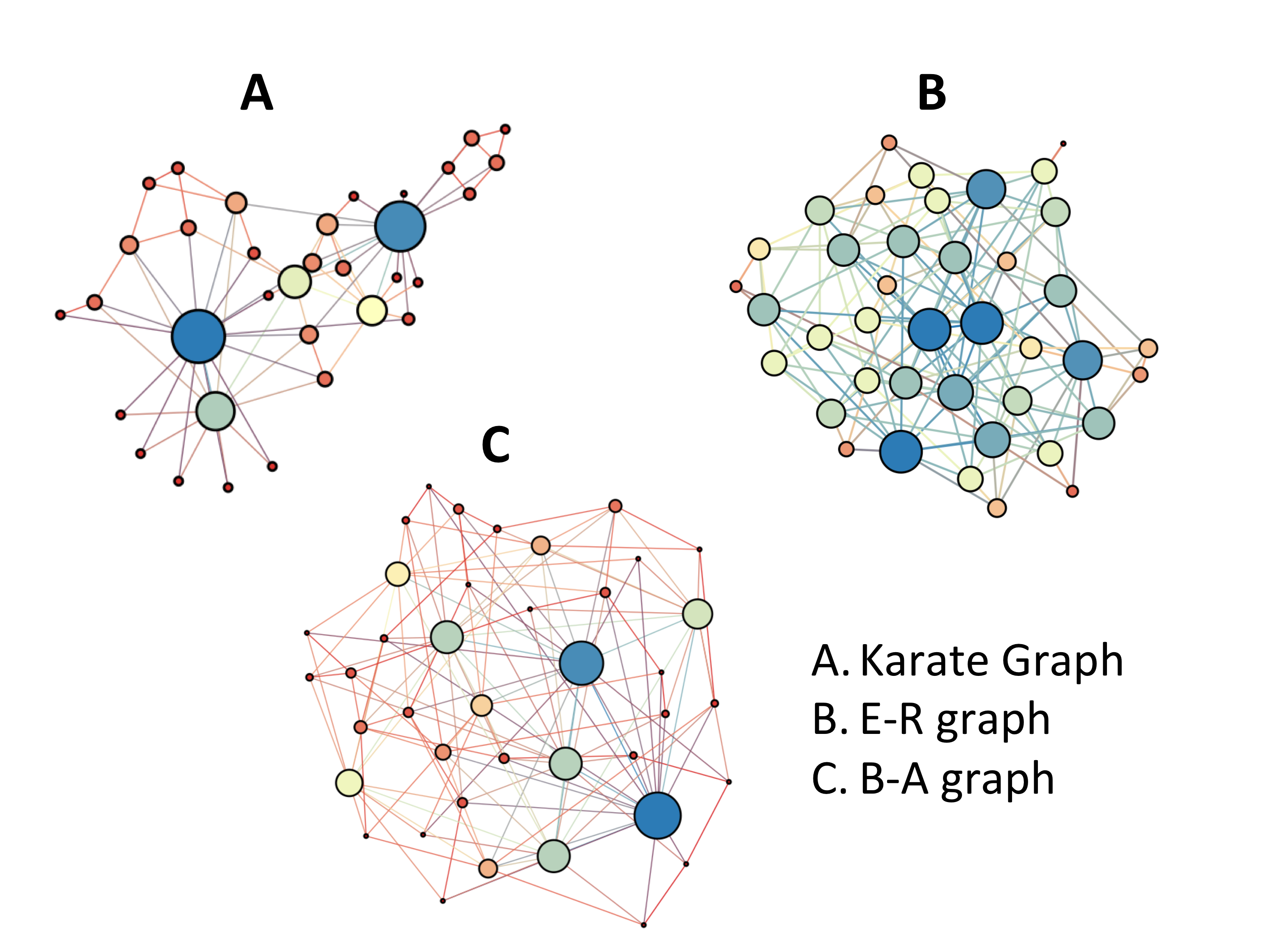}
\centering
\caption{The three graphs used in the comparison study in this article.  The sizes and colors of the vertices correspond to their degree.}
\label{fig:graphs}
\end{figure}

\subsection{The Karate graph}

For the Karate graph we observed very good positive correlation between the local Betti number $\beta_1$ for the $N_0$  neighborhood with a number of vertex specific graph invariants. Additionally good negative correlation was observed with the local clustering coefficient.

\begin{table}
  \begin{center}
    \caption{Karate graph correlations between vertex-centered local homology and other invariants}
    \label{tab:karate_vert}
\begin{tabular}{r|ccc|ccc}
Karate graph      & $\beta_1(N_0)$ & $\beta_1(N_1)$ & $\beta_1(N_2)$ & $\beta_2(N_0)$ & $\beta_2(N_1)$ & $\beta_2(N_2)$ \\\hline
Degree centrality &		{\bf 0.700}		&	{\bf 0.700}		  &		0.422		 &		0.520	   &		0.520	 &	-0.001		  \\
Closeness centrality &	{\bf 0.726}			&	{\bf 0.726}		  &	{\bf 0.703}			 &	0.349		   &	0.349		 &	0.339		  \\
Betweenness centrality ($v$) &		{\bf 0.741}	    &	{\bf 0.741}		  &		0.311		 &		{\bf 0.740}	   &	{\bf 0.740}		 &		-0.031	  \\
Random walk centrality &	{\bf 0.761}			&		{\bf 0.761}	  &		0.388		 &	{\bf 0.644}		   &	{\bf 0.644}		 &	0.011		  \\
Maximal cliques     &		{\bf 0.718}		&	{\bf 0.718}		  &		0.307		 &		0.548	   &		0.548	 &	-0.085		  \\
Clustering coeff.     &		{\bf -0.656}		&	{\bf -0.656}		  &		-0.154		 &	-0.214		   &		-0.214	 &		0.005	  \\
\end{tabular}
  \end{center}
\end{table}

\begin{table}
  \begin{center}
    \caption{Karate graph correlations between edge-centered local homology and aggregation of other invariants}
    \label{tab:karate_edge}
\begin{tabular}{r|ccc|ccc}
Karate graph      & $\beta_1(N_0)$ & $\beta_1(N_1)$ & $\beta_1(N_2)$ & $\beta_2(N_0)$ & $\beta_2(N_1)$ & $\beta_2(N_2)$ \\\hline
Degree centrality &		-0.026		&	{\bf 0.740}		  &		0.092		 &		0.434	   &	0.391		 &		-0.167	  \\
Closeness centrality &		0.116		&		{\bf 0.677}	  &		0.536		 &	0.226		   &	0.345		 &	0.342		  \\
Betweenness centrality ($v$) &		0.013	    &	{\bf 0.706}		  &		0.005		 &		0.339	   &	{\bf 0.641}		 &	-0.317		  \\
Random walk centrality &	0.013			&	{\bf 0.759}		  &		0.080		 &		0.392	   &	0.578		 &	-0.206		  \\
Maximal cliques &		0.010		&		{\bf 0.743}	  &		0.020	 &	0.434   &	0.391		 &	-0.276		  \\
Clustering coeff. &		-0.418		&		{\bf -0.639}	  &		-0.348		 &		-0.259	   &	-0.141		 &		-0.055	  \\

Betweenness centrality ($e$) &		0.283		&	0.358  &		0.151	 &		-0.018	   &	0.406   &	-0.123		  \\
\end{tabular}
  \end{center}
  \end{table}

The results are summarized in Tables \ref{tab:karate_vert} and \ref{tab:karate_edge}, and Figure \ref{fig:VLH1N0Karate} for the $N_0$ case. 

\begin{figure}[!htb]
\includegraphics[width=\textwidth]{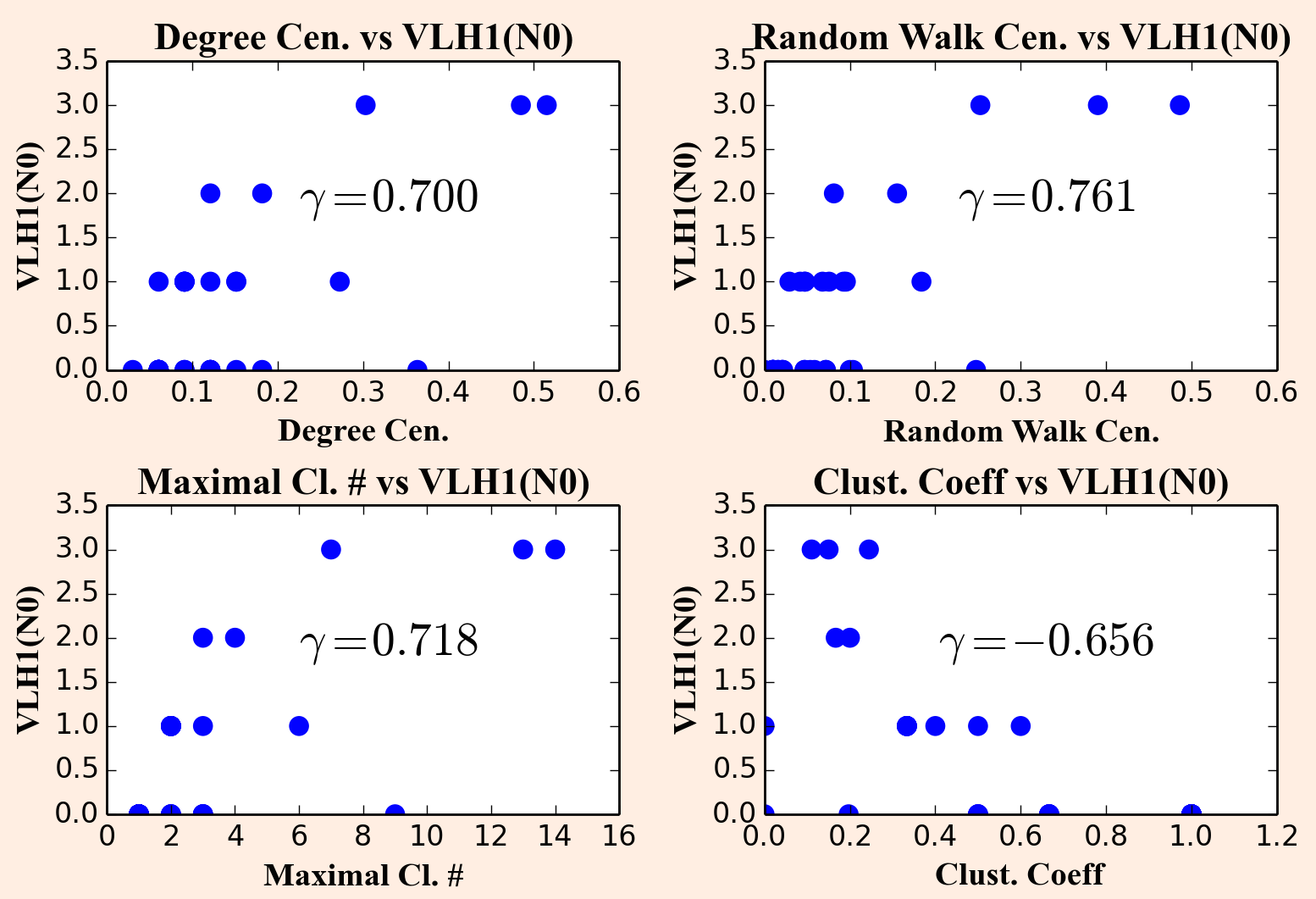}
\centering
\caption{Scatter plots comparing the various vertex-specific graph invariants with $\beta_1$ for the $N_0$ neighborhood for the Karatex graph.}
\label{fig:VLH1N0Karate}
\end{figure}

\subsection{The Erd\H{o}s-R\'{e}nyi graph}
For the Erd\H{o}s-R\'{e}nyi graph with 40 nodes and 146 edges, we observed excellent correlation between the various centrality values (including the maximal clique count) and $\beta_1$ for the $N_1$ neighborhood as shown in Tables \ref{tab:er_vert} and \ref{tab:er_edge}.

\begin{table}
  \begin{center}
    \caption{Erd\H{o}s-R\'{e}nyi graph correlations between vertex-centered local homology and other invariants}
    \label{tab:er_vert}
\begin{tabular}{r|ccc|ccc}
ER(40) graph      & $\beta_1(N_0)$ & $\beta_1(N_1)$ & $\beta_1(N_2)$ & $\beta_2(N_0)$ & $\beta_2(N_1)$ & $\beta_2(N_2)$ \\\hline
Degree centrality &		0.163		&	0.163		  &		{\bf 0.901}		 &		0.474	   &	0.474		 &		0.030	  \\
Closeness centrality &	0.181			&	0.181		  &		{\bf 0.907}		 &	0.429		   &	0.429		 &		0.059	  \\
Betweenness centrality ($v$) &		0.229	    &	0.229		  &	{\bf 0.727}			 &		0.316	   &	0.316		 &	-0.143		  \\
Random walk centrality &	0.303			&	0.303		  &		0.846		 &	0.353		   &	0.353		 &	-0.025		  \\
Maximal cliques     &		0.101		&	0.101		  &		{\bf 0.857}		 &	0.624		   &	0.624		 &		0.088	  \\
Clustering coeff.     &		{\bf -0.718}		&	{\bf -0.718}		  &		-0.218		 &		0.126	   &	0.426		 &	-0.118		  \\
\end{tabular}
  \end{center}
  \end{table}

\begin{table}
  \begin{center}
    \caption{Erd\H{o}s-R\'{e}nyi graph correlations between edge-centered local homology and aggregation of other invariants}
    \label{tab:er_edge}
\begin{tabular}{r|ccc|ccc}
ER(40) graph      & $\beta_1(N_0)$ & $\beta_1(N_1)$ & $\beta_1(N_2)$ & $\beta_2(N_0)$ & $\beta_2(N_1)$ & $\beta_2(N_2)$ \\\hline
Degree centrality &		-0.322		&		-0.115	  &		{\bf 0.836}		 &		0.394	   &	0.299		 &		-0.227	  \\
Closeness centrality &	-0.319			&	-0.095		  &		{\bf 0.842}	 &		0.380	   &	0.263		 &	-0.243		  \\
Betweenness centrality ($v$) &	-0.206		    &	-0.006	  &		{\bf 0.631}		 &	0.276		   &	0.113		 &	-0.295		  \\
Random walk centrality &		-0.223		&	0.049		  &		{\bf 0.726}		 &	0.320		   &	0.164		 &		-0.227	  \\
Maximal cliques &	-0.256			&	-0.210		  &		{\bf 0.787}		 &	0.491		   &	0.433		 &		-0.121	  \\
Clustering coeff. &	-0.528			&	{\bf -0.794}		  &		-0.120		 &	0.233		   &	0.196		 &	-0.239		  \\
Betweenness centrality ($e$) &		0.398		&	-0.035		  &		-0.130		 &	-0.368   &	-0.001  &		-0.115	  \\
\end{tabular}
\end{center}
\end{table}

Figure \ref{fig:VLH1N1ER40} shows the high correlation between $\beta_1$ and centrality for the $N_1$ neighborhood. However the correlation with the local clustering coefficient was very bad for the same scenario. We also noted that the correlation of $\beta_1$ with the local clustering clustering coefficient was very good for the $N_0$ neighborhood (high negative value). 

\begin{figure}[!htb]
\includegraphics[width=12cm]{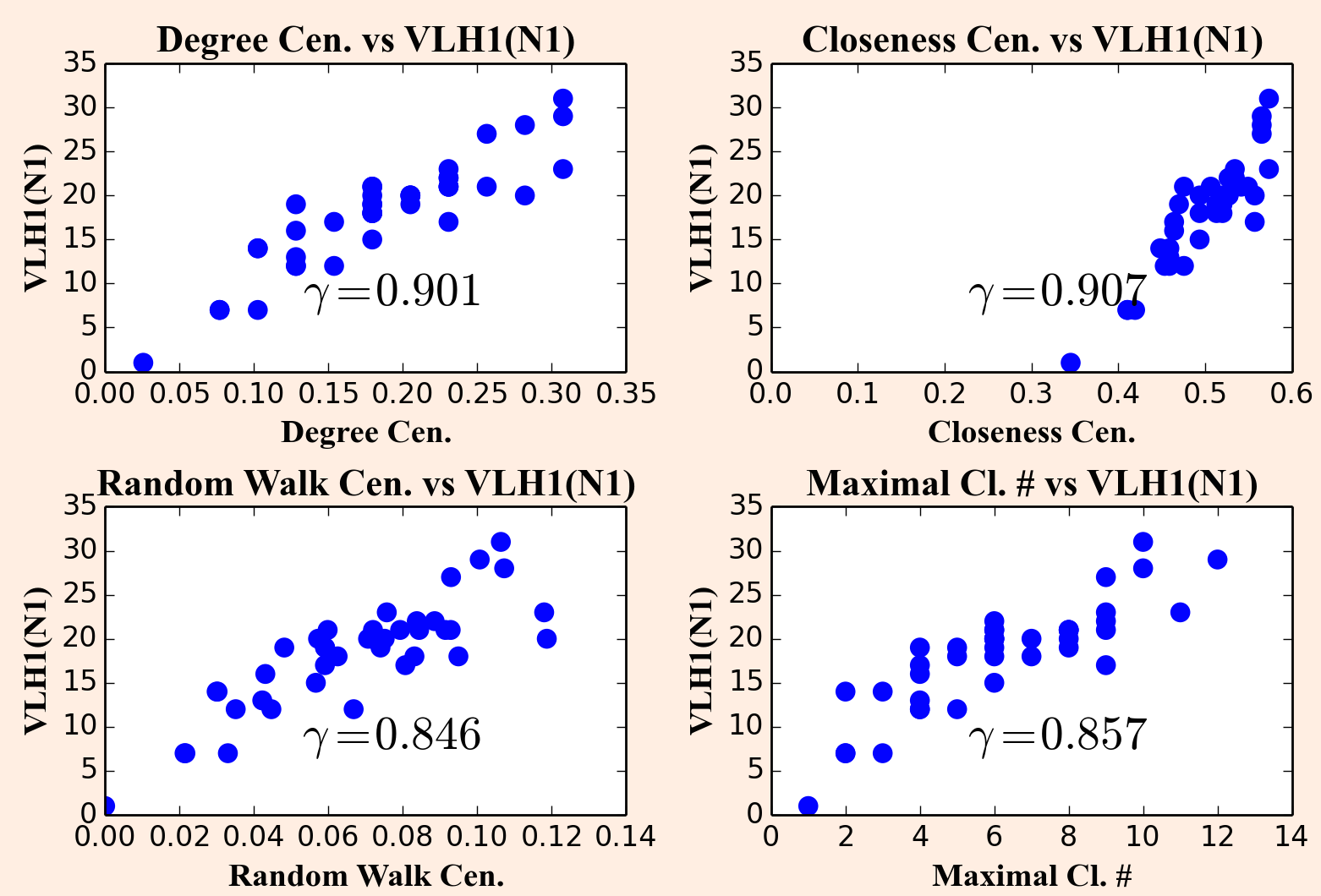}
\centering
\caption{Scatter plots comparing the vertex graph invariants for the Erd\H{o}s-R\'{e}nyi graph with $\beta_1$ for the $N_1$ neighborhood.}
\label{fig:VLH1N1ER40}
\end{figure}

\subsection{The Barabasi-Albert graph}
Finally we considered the Barabasi-Albert preferential attachment with 40 nodes and 144 edges and ran similar comparisons, as shown in Tables \ref{tab:ba_vert} and \ref{tab:ba_edge}.

\begin{table}
  \begin{center}
    \caption{Barabassi-Albert graph correlations between vertex-centered local homology and other invariants}
    \label{tab:ba_vert}
\begin{tabular}{r|ccc|ccc}
BA(40) graph      & $\beta_1(N_0)$ & $\beta_1(N_1)$ & $\beta_1(N_2)$ & $\beta_2(N_0)$ & $\beta_2(N_1)$ & $\beta_2(N_2)$ \\\hline
Degree centrality &		-0.114		&		-0.114	  &		{\bf 0.839}		 &		{\bf 0.844}	   &		{\bf 0.844}	 &		-0.161	  \\
Closeness centrality &	-0.171			&	-0.171		  &		{\bf 0.788}		 &	{\bf 0.797}		   &	{\bf 0.797}		 &		-0.007	  \\
Betweenness centrality ($v$) &	-0.034		    &	-0.034		  &		{\bf 0.798}		 &	{\bf 0.849}		   &	{\bf 0.849}		 &	-0.144	 \\
Random walk centrality &	-0.033			&		-0.033	  &		{\bf 0.828}		 &	{\bf 0.800}		   &	{\bf 0.800}		 &		-0.152	  \\
Maximal cliques     &		-0.137		&	-0.137		  &		0.827		 &	{\bf 0.915}		   &		{\bf 0.915}	 &		-0.229	  \\
Clustering coeff.     &		{\bf -0.657}		&	{\bf -0.657}		  &		-0.533		 &	-0.224		   &	-0.224		 &	0.244		  \\
\end{tabular}
  \end{center}
  \end{table}

\begin{table}
  \begin{center}
    \caption{Barabassi-Albert graph correlations between edge-centered local homology and aggregation of other invariants}
    \label{tab:ba_edge}
\begin{tabular}{r|ccc|ccc}
BA(40) graph      & $\beta_1(N_0)$ & $\beta_1(N_1)$ & $\beta_1(N_2)$ & $\beta_2(N_0)$ & $\beta_2(N_1)$ & $\beta_2(N_2)$ \\\hline
Degree centrality &		-0.302		&	-0.211		  &		{\bf 0.815}		 &		0.564	   &	{\bf 0.830}		 &	-0.594		  \\
Closeness centrality &	-0.295			&	-0.242	  &		{\bf 0.818}		 &		0.513	   &	{\bf 0.816}		 &	-0.563		  \\
Betweenness centrality ($v$) &	-0.226 &		-0.085	  &	{\bf 0.738}			 &		0.487	   &	{\bf 0.825}		 &	-0.519		  \\
Random walk centrality &		-0.273		&	-0.126		  &		{\bf 0.780}		 &		0.545	   &	{\bf 0.788}		 &	-0.555		  \\
Maximal cliques &		-0.276		&	-0.243		  &		{\bf 0.809}		 &	0.590		   &	{\bf 0.882}		 &		{\bf -0.647}	  \\
Clustering coeff. &		-0.256		&	-0.561		  &		-0.341		 &	-0.156		   &	-0.058		 &	0.172		  \\
Betweenness centrality ($e$)  &	 0.053	 &	 0.163   &	 0.421	  &	  0.012		 &	  0.462   &	   -0.274    \\
\end{tabular}
\end{center}
\end{table}

For the vertex specific invariants (Table \ref{tab:ba_vert}), we found excellent correlation between the centrality invariants (including the maximal clique count) for $\beta_1$ for the $N_1$ neighborhood but as with the Erd\H{o}s-R\'{e}nyi graph, the correlation with clustering coefficient was bad. Clustering coefficient on the other hand was again well correlated with $\beta_1$ for the $N_0$ neighborhood. Figure \ref{fig:VLH1N1BA40} captures the correlations for $\beta_1$ for the $N_1$ neighborhood.

\begin{figure}[!htb]
\includegraphics[width=\textwidth]{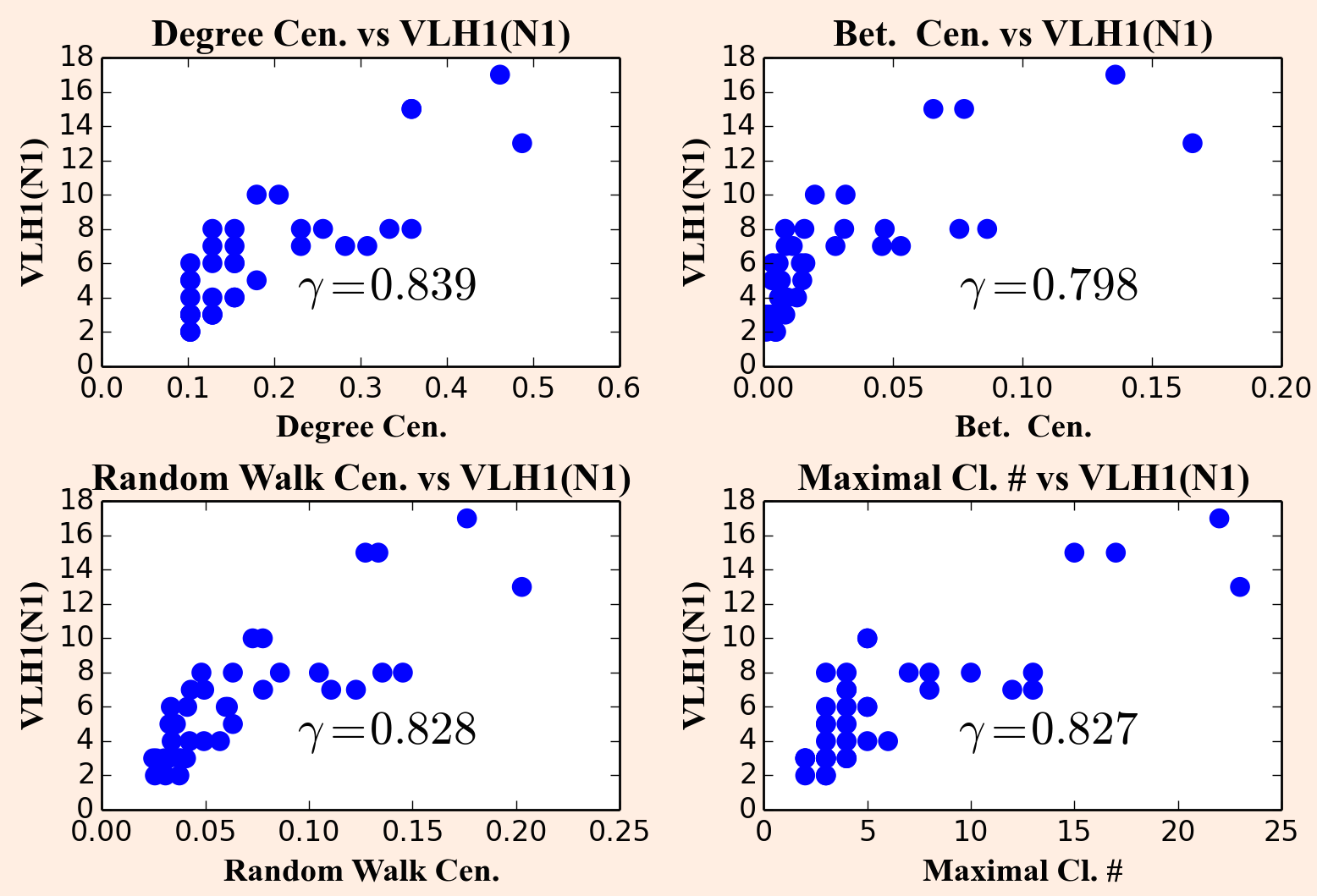}
\centering
\caption{Scatter plots comparing the vertex graph invariants for the Barabasi-Albert graph with $\beta_2$ for the $N_1$ neighborhood.}
\label{fig:VLH1N1BA40}
\end{figure}

\subsection{Summary and discussions}
\label{sec:ns_comparison_measure}
We ran a large number of combinations in the correlation study and noticed a few specific trends and the same are summarized below.  Generally, there are some strong correlations between the local Betti number and various graph invariants, but the local Betti is also clearly quite distinct.  We therefore conclude that it provides \textit{independent information} about the local structure of a graph.

\paragraph{Local Clustering Coefficient:}

Figure \ref{fig:CCGoodN1} specifically shows three scatter plots, one each for each of the three graphs considered, comparing the local clustering coefficient with the vertex local homology $\beta_1$ for the $N_0$ neighborhood. Thus it can be seen that $\beta_1(N_0)$ is typically well correlated in a negative sense with the local clustering coefficient. This is in line with the observation that higher the clustering coefficient for a given vertex, the higher the chance of existence of neighborhood triangles which then will reduce the possibility of open loops thereby leading to lower $\beta_1$ values.

\begin{figure}[!htb]
\includegraphics[width=\textwidth]{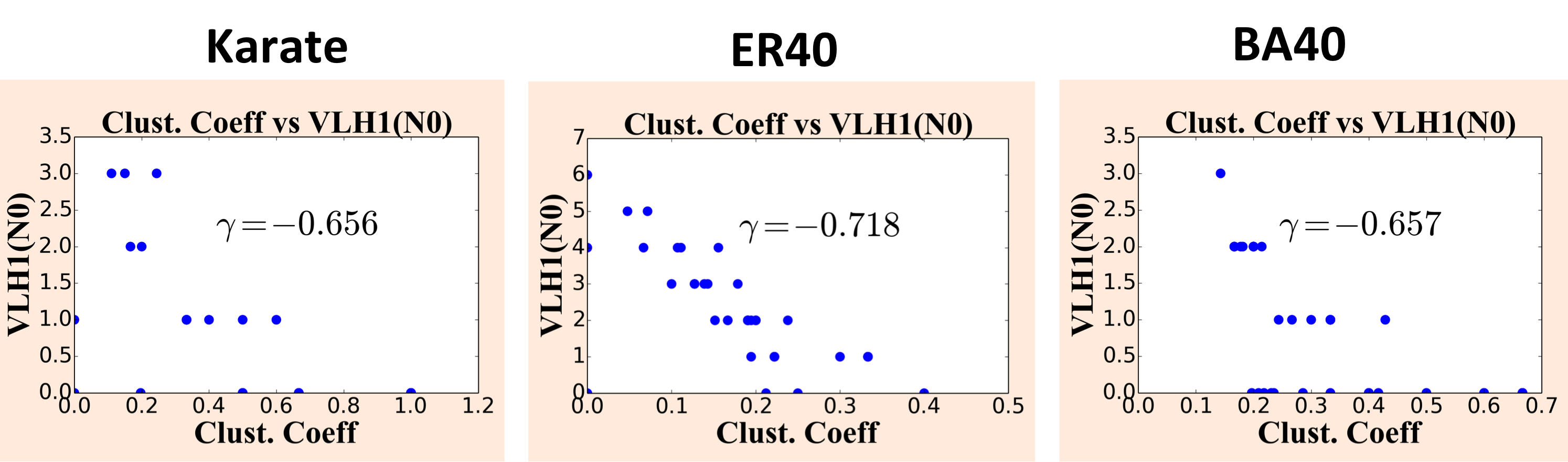}
\centering
\caption{Scatter plots comparing the local clustering coefficient and $\beta_1$ for $N_1$ neighborhood across all three graphs considered. A, B, and C denote the Karate graph, ER(40) denotes the Erd\H{o}s-R\'{e}nyi graph, and BA(40) denotes the Barabasi-Albert graph respectively. Good correlations (-0.656,-0.718,-0.657) are obtained for A, B, and C respectively.}
\label{fig:CCGoodN1}
\end{figure}

\paragraph{Centrality based invariants:}

While the various centrality based invariants (including the maximal clique count) showed moderate to good correlation with the local Betti number $\beta_1$ for $N_0$ and $N_1$ neighborhoods, we specifically noticed very good correlation ($|\rho|$ up to 0.9) with $\beta_1$ for the $N_1$ neighborhood for both of the synthetic graphs, and the $N_1$ neighborhood for the Karate graph. Many results along this line were presented in the earlier subsections. Specifically we notice that $\beta_1$ tends to be correlated positively with degree centrality since a higher degree can result in a higher possibility of forming open loops and it is known from network science literature that most of the vertex centrality measures are positively correlated with the degree centrality. 

\section{Future directions}

At present, there are very few software libraries available that are capable of computing local or relative homology.  Aside from our own {\tt pysheaf} \cite{pysheaf}, we are only aware that {\tt RedHom} \cite{RedHom} is able to compute relative homology.  There is considerable need for the equivalent of reductions or coreductions for relative homology to improve computational efficiency.  This is likely to be fraught with difficulties as reductions that are useful in one neighborhood may not be useful in another.

How robust to noise is the local homology of a combinatorial space? If simplices are included with some probability distribution how does that affect the local homology?  At present, results are avialable for the global homology of random simplicial complexes as the number of simplices grows \cite{Kahle_2009}, but this says nothing of its local homology.  Additionally, while persistent local homology of point clouds is now an active area of study, it is yet unclear how applicable the robustness theorems obtained (for instance \cite{Bendich_2007}) relate to general filtrations of combinatorial spaces.

\section*{Acknowledgements}
Partial funding for this work was provided by DARPA SIMPLEX N66001-15-C-4040.

\bibliographystyle{plain}
\bibliography{localhomology_bib}

\begin{thebibliography}{10}

\bibitem{ahmed2014local}
Mahmuda Ahmed, Brittany~Terese Fasy, and Carola Wenk.
\newblock Local persistent homology based distance between maps.
\newblock In {\em Proceedings of the 22nd ACM SIGSPATIAL International
  Conference on Advances in Geographic Information Systems}, pages 43--52. ACM,
  2014.

\bibitem{Alexandrov_1929}
P.~Alexandrov.
\newblock Untersunhungen \"{u}ber {Gestalt} und {Lage} abgeschlossener {Mengen}
  beliebiger {Dimension}.
\newblock {\em Ann. Math}, 2(30):101--187, 1929.

\bibitem{Alexandrov_1932}
P.~Alexandrov.
\newblock Dimensionstheorie.
\newblock {\em Math. Annalen}, 106:161--238, 1932.

\bibitem{Alexandrov_1933}
P.~Alexandrov.
\newblock \"{U}ber die {Urysohnschen} {Konstanten}.
\newblock {\em Fund. Math.}, 20:140--151, 1933.

\bibitem{Alexandrov_1935}
P.~Alexandrov.
\newblock On local properties of closed sets.
\newblock {\em Ann. Math.}, 2(36):1--35, 1935.

\bibitem{Alexandrov_1937}
P.~Alexandrov.
\newblock Diskrete r\"aume.
\newblock {\em Mat. Sb. (N.S.)}, 2:501--519, 1937.

\bibitem{Barabasi_1999}
Albert-L\'aszl\'o Barab\'asi and R\'eka Albert.
\newblock Emergence of scaling in random networks.
\newblock {\em Science}, 286(5439):509--512, October 1999.

\bibitem{barmak2011algebraic}
Jonathan~A Barmak.
\newblock {\em Algebraic topology of finite topological spaces and
  applications}, volume 2032.
\newblock Springer, 2011.

\bibitem{Bavelas_1950}
Alex Bavelas.
\newblock Communication patterns in task-oriented groups.
\newblock {\em The Journal of the Acoustical Society of America},
  22(6):725--730, 1950.

\bibitem{Bendich_2008}
P.~Bendich.
\newblock {\em Analyzing stratified spaces using persistent versions of
  intersection and local homology}.
\newblock PhD thesis, Duke University, 2008.

\bibitem{Bendich_2007}
P.~Bendich, D.~Cohen-Steiner, H.~Edelsbrunner, J.~Harer, and D.~Morozov.
\newblock Inferring local homology from sampled stratified spaces.
\newblock In {\em Foundations of Computer Science (FOCS)}, pages 536--546,
  Providence, RI, 2007.

\bibitem{bendich2011persistent}
Paul Bendich and John Harer.
\newblock Persistent intersection homology.
\newblock {\em Foundations of Computational Mathematics}, 11(3):305--336, 2011.

\bibitem{bendich2012local}
Paul Bendich, Bei Wang, and Sayan Mukherjee.
\newblock Local homology transfer and stratification learning.
\newblock In {\em Proceedings of the twenty-third annual ACM-SIAM symposium on
  Discrete Algorithms}, pages 1355--1370. SIAM, 2012.

\bibitem{Borel_1957}
A.~Borel.
\newblock The {Poincar\'{e}} duality in generalized manifolds.
\newblock {\em Michigan Math. J.}, 4(3):227--239, 1957.

\bibitem{Borel_1960}
A.~Borel and J.~C. Moore.
\newblock Homology theory for locally compact spaces.
\newblock {\em Michigan Math. J.}, 7(2):137--159, 1960.

\bibitem{Brandes_2001}
Ulrik Brandes.
\newblock A faster algorithm for betweenness centrality.
\newblock {\em Journal of mathematical sociology}, 25(2):163--177, 2001.

\bibitem{Bredon_1969}
G.~E. Bredon.
\newblock Wilder manifolds are locally orientable.
\newblock {\em Proc. Nat. Acad. Sci.}, pages 1079--1081, 1969.

\bibitem{Bredon}
Glen Bredon.
\newblock {\em Sheaf theory}.
\newblock Springer, 1997.

\bibitem{Brodman_1998}
M.P. Brodman and R.~Y. Sharp.
\newblock {\em Local cohomology: An algebraic introduction with geometric
  applications}.
\newblock Cambridge University Press, 1998.

\bibitem{Brown_2017}
Adam Brown and Bei Wang.
\newblock Sheaf-theoretic stratification learning, {\tt arxiv:1712.07734
  [cs.cg]}, 2017.

\bibitem{clader2009inverse}
Emily Clader et~al.
\newblock Inverse limits of finite topological spaces.
\newblock {\em Homology, Homotopy and Applications}, 11(2):223--227, 2009.

\bibitem{gephi}
The~Gephi consortium.
\newblock The {\tt gephi} library, {\tt https://gephi.org/}, 2017.

\bibitem{Curry_2015}
Justin~Michael Curry.
\newblock Topological data analysis and cosheaves.
\newblock {\em Japan Journal of Industrial and Applied Mathematics},
  32(2):333--371, 2015.

\bibitem{Dey_2013}
T.~K. Dey, F.~Fan, and Y.~Wang.
\newblock Computing topological persistence for simplicial maps.
\newblock In {\em Proc. 30th Annu. Sympos. Comput. Geom.}, 2013.

\bibitem{Dey_2014}
T.~K. Dey, F.~Fan, and Y.~Wang.
\newblock Dimension detection with local homology.
\newblock In {\em Canadian Conf. Comput. Geom. (CCCG)}, 2014.

\bibitem{Diestel_2012}
Reinhard Diestel.
\newblock {\em Graph Theory}.
\newblock Springer, 2012.

\bibitem{Dowker_1952}
C.H. Dowker.
\newblock Homology groups of relations.
\newblock {\em Annals of Mathematics}, pages 84--95, 1952.

\bibitem{Edelsbrunner_2002}
H.~Edelsbrunner, D.~Letscher, and A.~Zomorodian.
\newblock Topological persistence and simplification.
\newblock {\em Discrete and Computational Geometry}, 28:511--533, 2002.

\bibitem{Erdos_1959}
P.~Erd\H{o}s and A.~R\'enyi.
\newblock On random graphs.
\newblock {\em Publicationes Mathematicae}, 6:290--297, 1959.

\bibitem{Fasy_2016}
B.~T. Fasy and B.~Wang.
\newblock Exploring persistent local homology in topological data analysis.
\newblock In {\em 2016 IEEE International Conference on Acoustics, Speech and
  Signal Processing (ICASSP)}, pages 6430--6434, March 2016.

\bibitem{Fraser_2012}
Maia Fraser.
\newblock Persistent homology of filtered covers, {\tt arxiv:1202.6132
  [math.at]}, 2012.

\bibitem{Freeman_1977}
Linton~C Freeman.
\newblock A set of measures of centrality based on betweenness.
\newblock {\em Sociometry}, pages 35--41, 1977.

\bibitem{Goresky_1988}
Mark Goresky and Robert MacPherson.
\newblock Stratified {Morse} theory.
\newblock In {\em Stratified Morse Theory}, pages 3--22. Springer, 1988.

\bibitem{Grothendieck_1967}
A.~Grothendieck.
\newblock Local cohomology.
\newblock {\em Springer LNM}, 41, 1967.

\bibitem{haro2006stratification}
Gloria Haro, Gregory Randall, and Guillermo Sapiro.
\newblock Stratification learning: Detecting mixed density and dimensionality
  in high dimensional point clouds.
\newblock In {\em Advances in Neural Information Processing Systems}, pages
  553--560, 2006.

\bibitem{Hatcher_2002}
A.~Hatcher.
\newblock {\em Algebraic Topology}.
\newblock Cambridge University Press, 2002.

\bibitem{RedHom}
Jagiellonian~University Institute~of Computer~Science.
\newblock The {\tt redhom} library, {\tt
  http://capd.sourceforge.net/capdredhom/}, 2017.

\bibitem{Joslyn_2016}
Cliff~A Joslyn, Brenda Praggastis, Emilie Purvine, A~Sathanur, Michael
  Robinson, and Stephen Ranshous.
\newblock Local homology dimension as a network science measure.
\newblock In {\em SIAM Workshop on Network Science}, 2016.

\bibitem{Kahle_2009}
M.~Kahle.
\newblock Topology of random clique complexes.
\newblock {\em Discrete Math.}, 309(6):1658--1671, 2009.

\bibitem{Kashiwara_1990}
M.~Kashiwara and P.~Schapira.
\newblock {\em Sheaves on manifolds}.
\newblock Springer, 1990.

\bibitem{May_2003}
J.P. May.
\newblock Finite topological spaces. notes for reu, 2003.

\bibitem{mccord1966}
Michael~C. McCord.
\newblock Singular homology groups and homotopy groups of finite topological
  spaces.
\newblock {\em Duke Math. J.}, 33(3):465--474, 09 1966.

\bibitem{miller2000leray}
Haynes Miller.
\newblock Leray in {Oflag} {XVIIA}: the origins of sheaf theory, sheaf
  cohomology, and spectral sequences.
\newblock {\em Kantor 2000}, pages 17--34, 2000.

\bibitem{Milnor_1963}
J.~Milnor.
\newblock {\em Morse theory}.
\newblock Princeton University Press, 1963.

\bibitem{MilnorStasheff_1974}
John Milnor and James~D. Stasheff.
\newblock {\em Characteristic Classes}.
\newblock Princeton University Press, 1974.

\bibitem{Mitchell_1990}
W.~J.R. Mitchell.
\newblock Defining the boundary of a homology manifold.
\newblock {\em Proceedings of the American Mathematical Society},
  110(2):509--513, 1990.

\bibitem{Munkres_1984}
J.~Munkres.
\newblock {\em Elements of Algebraic Topology}.
\newblock Westview Press, 1984.

\bibitem{Newman_2005}
Mark~EJ Newman.
\newblock A measure of betweenness centrality based on random walks.
\newblock {\em Social networks}, 27(1):39--54, 2005.

\bibitem{olum1953mappings}
Paul Olum.
\newblock Mappings of manifolds and the notion of degree.
\newblock {\em Annals of Mathematics}, pages 458--480, 1953.

\bibitem{Raymond_1961}
F.~Raymond.
\newblock Local cohomology groups with closed supports.
\newblock {\em Math. Zeitschr.}, 76:31--41, 1961.

\bibitem{RobinsonGlobalSIP2014}
Michael Robinson.
\newblock Analyzing wireless communication network vulnerability with
  homological invariants.
\newblock In {\em IEEE Global Conference on Signal and Information Processing
  (GlobalSIP)}, Atlanta, Georgia, 2014.

\bibitem{pysheaf}
Michael Robinson, Chris Capraro, and Brenda Praggastis.
\newblock The {\tt pysheaf} library, {\tt https://github.com/kb1dds/pysheaf},
  2016.

\bibitem{rourke1999homology}
Colin Rourke and Brian Sanderson.
\newblock Homology stratifications and intersection homology.
\newblock {\em Geometry and Topology Monographs}, 2:455--472, 1999.

\bibitem{skraba2014approximating}
Primoz Skraba and Bei Wang.
\newblock Approximating local homology from samples.
\newblock In {\em Proceedings of the Twenty-Fifth Annual ACM-SIAM Symposium on
  Discrete Algorithms}, pages 174--192. Society for Industrial and Applied
  Mathematics, 2014.

\bibitem{Steenrod_1942}
N.E. Steenrod.
\newblock Topological methods for the construction of tensor functions.
\newblock {\em Annals of Mathematics}, 43, 1942.

\bibitem{steenrod1943homology}
NE~Steenrod.
\newblock Homology with local coefficients.
\newblock {\em Annals of Mathematics}, pages 610--627, 1943.

\bibitem{Stong_1966}
R.~E. Stong.
\newblock Finite topological spaces.
\newblock {\em Trans. Amer. Math. Soc.}, 123(2):325--340, June 1966.

\bibitem{Cech_1934}
E.~\v{C}ech.
\newblock Sur les nombres de {Betti} locaux.
\newblock {\em Ann. Math.}, 35:678--701, 1934.

\bibitem{Watts_1998}
Duncan~J Watts and Steven~H Strogatz.
\newblock Collective dynamics of ‘small-world’networks.
\newblock {\em nature}, 393(6684):440--442, 1998.

\bibitem{White_1950}
P.~A. White.
\newblock On the union of two generalized manifolds.
\newblock {\em Scuola Normale Superiore}, 1950.

\bibitem{White_1952}
P.~A. White.
\newblock Some characterizations of homology manifolds with boundaries.
\newblock {\em Canad. J. Math}, 4:329--342, 1952.

\bibitem{Wilder_1949}
R.L. Wilder.
\newblock {\em Topology of manifolds}, volume~32 of {\em Colloquium
  Publications}.
\newblock American Mathematical Society, 1949.

\bibitem{Zachary_1977}
Wayne~W. Zachary.
\newblock An information flow model for conflict and fission in small groups.
\newblock {\em Journal of Anthropological Research}, 33(4):452--473, 1977.

\end{thebibliography}
\end{document}